\documentclass[11pt]{amsart}
\usepackage{amsmath, amssymb}
\usepackage[alphabetic]{amsrefs}
\usepackage{amsfonts, epsfig,color,wrapfig,epstopdf}
\usepackage{ xypic,verbatim,amscd,color}
\usepackage{youngtab, ytableau}
\usepackage{graphicx}
\usepackage{colordvi}

\topmargin=-10pt \textheight=626pt     \textwidth=474pt
\oddsidemargin=-3pt   \evensidemargin=-3pt

\numberwithin{equation}{section}

\newcommand\op{\operatorname}

\newcommand\bull{\sssize{\bullet}}

\newcommand\yiy{\widetilde{D}}
\newcommand\aresim{\widehat{R}}

\newcommand\frh{\mathfrak{h}}

\newcommand\killing{\kappa}

\newcommand{\letof}[1]{\stackrel{#1}{\longrightarrow}}
\newcommand\mf{\mathcal{F}}
\newcommand\yosim{\widehat{Y}^0}

\newcommand\Pic{\operatorname{Pic}}

\newcommand\Fl{\operatorname{Fl}}

\newcommand\tensor{\otimes}
\newcommand\ml{\mathcal{L}}

\newcommand\mb{\mathcal{B}}

\newcommand\GL{\operatorname{GL}}

\newcommand\ms{\mathcal{S}}

\newcommand\Gr{\operatorname{Gr}}
\newcommand\SL{\operatorname{SL}}

\newcommand{\leto}[1]{\stackrel{#1}{\to}}

\newcommand{\mfq}{\mathcal{F}_{\Bbb{Q}}}
\newcommand{\mfqtwo}{\mathcal{F}_{2,\Bbb{Q}}}

\newcommand\gammanq{\Gamma_{n,\Bbb{Q}}(s)}

\newcommand\SLL{\operatorname{SL}}

\newtheorem{theorem}{Theorem}[section]

\newtheorem{remark}[theorem]{ Remark}

\newtheorem{corollary}[theorem]{Corollary}

\newtheorem{proposition}[theorem]{Proposition}

\newtheorem{problem}[theorem]{Problem}
\newtheorem{lemma}[theorem]{Lemma}

\newtheorem{definition/lemma}[theorem]{Definition/Lemma}
\newtheorem{defi}[theorem]{Definition}

\newtheorem{notation}[theorem]{Notation}

\newtheorem{example}[theorem]{\bf Example}

\begin{document}
\title[Extremal rays of eigencones]{Extremal rays in the Hermitian eigenvalue problem}
\author{Prakash Belkale}

\maketitle
\begin{abstract}
The Hermitian eigenvalue problem asks for  the possible eigenvalues of a sum of $n\times n$ Hermitian matrices, given the eigenvalues of the summands.
The regular faces of the cones $\Gamma_n(s)$ controlling this problem  have been characterized in terms of classical Schubert calculus by the work of several authors.

We determine extremal rays of $\Gamma_n(s)$ (which are never regular faces) by relating them to the geometry of flag varieties: The extremal rays either arise from ``modular intersection loci", or by ``induction'' from extremal rays of smaller groups. Explicit formulas are given for both the extremal rays coming from such intersection loci, and for the induction maps.
\end{abstract}
\section{Introduction}
The classical Hermitian eigenvalue problem asks for the possible eigenvalues of a sum of Hermitian matrices given the eigenvalues of the summands (see the survey \cite{FBulletin})

We first introduce notation for the Hermitian eigenvalue problem.
\begin{defi}\label{nafnew}
Define the  polyhedral cone
\begin{equation}\label{weylC}
\frh_{+,n}=\{{x}=(x^{(1)},\dots,x^{(n)})\in \Bbb{R}^n \mid  \sum_{a=1}^n x^{(a)}=0,\ x^{(1)}\geq x^{(2)}\geq\dots\geq x^{(n)}\}.
\end{equation}
Let $s\geq 3$ be a fixed integer. Define the eigencone $\Gamma_n(s)\subset (\frh_{+,n})^s$ to the set of $s$-tuples $(x_1,\dots,x_s)$ such that there exist traceless $n\times n$ Hermitian matrices $A_1,\dots,A_s$, such that
\begin{enumerate}
\item For all $1\leq i\leq s$, the eigenvalues of $A_i$ are the numbers $x^{(a)}_i$, $a=1,\dots,n$. Here $x_i=(x_i^{(1)},\dots,x_i^{(n)})$.
\item $\sum_{i=1}^s A_i=0$.
\end{enumerate}
$\Gamma_n(s)$ is well known to be a rational polyhedral cone, see e.g., \cite{FBulletin}. The walls of $\Gamma_n(s)$ obtained by intersecting it with walls of $\frh_{+,n}^s$ are called the (Weyl) chamber walls of $\Gamma_n(s)$.
Note the standard fact that the space of traceless Hermitian matrices is identified with the Lie algebra of the special unitary group $\op{SU}(n)$.
\end{defi}

We consider the following problem
\begin{problem}\label{mainp}
Find the extremal rays of the cone $\Gamma_n(s)$.
\end{problem}
It was shown by Klyachko \cite{Kly} that the cone $\Gamma_n(s)$  is defined inside the set of triples $(x_1,\dots,x_s)$ by a system of inequalities controlled by the Schubert calculus of Grassmannians $\Gr(r,n)$. In \cite{BLocal}, the  intersection number one Klyachko inequalities were shown to be sufficient for cutting out the polyhedral cone $\Gamma_n(s)$. This smaller set of inequalities was then shown to give an irredundant set by Knutson, Tao and Woodward \cite{KTW}, amounting to a characterization of the regular facets (i.e., codimension one faces which are not  contained in  (Weyl) chamber walls in one of the coordinates, see Definition \ref{nafnew}) of $\Gamma_n(s)$. Higher codimension regular faces (i.e., not contained in a Weyl chamber wall) have been characterized by Ressayre \cite{R2} in terms of the deformed cup product on the cohomology of flag varieties introduced by the author and S. Kumar in \cite{BK}. Extremal rays, i.e., faces of dimension one, of $\Gamma_n(s)$, are on Weyl chamber walls  (see Lemma \ref{faceto}), are therefore not regular faces of $\Gamma_n(s)$, and  the above results on regular faces do not give  information about the extremal rays of $\Gamma_n(s)$.
The problem above is related to a problem of invariants in tensor products, which we now describe;
\subsection{Invariants in tensor products}
Define the rational Weyl chamber,
\begin{equation}\label{weylCQ1}
\frh_{+,n,\Bbb{Q}}= \{{x}=(x^{(1)},\dots,x^{(n)})\in \Bbb{Q}^n \mid  \sum_{a=1}^n x^{(a)}=0,\  x^{(1)}\geq x^{(2)}\geq \dots \geq x^{(n)}\}
\end{equation}
and the rational Cartan vector space which contains \eqref{weylCQ1}
\begin{equation}\label{rat1}
\frh_{n,\Bbb{Q}}= \{{x}=(x^{(1)},\dots,x^{(n)})\in \Bbb{Q}^n \mid  \sum_{a=1}^n x^{(a)}=0\}.
\end{equation}

Recall that irreducible representations of $\operatorname{GL}(n)$ are parameterized by sequences
$\lambda=(\lambda^{(1)},\dots,\lambda^{(n)})\in\Bbb{Z}^n$ with  $\lambda^{(1)}\geq \lambda^{(2)}\geq \dots\geq \lambda^{(n)}$. We denote the irreducible representation corresponding to $\lambda$ by $V_{\lambda}$. Two representations $V_{\lambda}$ and $V_{\mu}$ restrict to the same representation of $\SLL(n)$ if and only if $\lambda=\mu+c(1,\dots,1)$ for some integer $c$. The set of irreducible representations of $\SLL(n)$ is therefore in one-one correspondence with the set of $\lambda$ as above with $\lambda_n=0$, which in turn is in one-one correspondence with the set of dominant integral weights in $\frh_{n,\Bbb{Q}}^*$.

Recall the Killing form isomorphism $\frh_{n,\Bbb{Q}}^*=\frh_{n,\Bbb{Q}}$. This takes a $\lambda\in \frh^*_{n,\Bbb{Q}}$ as above to
a point ${\killing}(\lambda)=(x^{(1)},\dots,x^{(n)})\in \frh_{n,\Bbb{Q}}$  by the formulas
\begin{equation}\label{formule1}
{\killing}(\lambda)=(x^{(1)},\dots,x^{(n)})\in \frh_{n,\Bbb{Q}},\ \  x^{(a)}=\lambda^{(a)}-\frac{|\lambda|}{n},\ a=1,\dots,n,\  |\lambda|=\sum_{a=1}^n \lambda^{(a)}.
\end{equation}
Note that $V_{\lambda}$ and $V_{\mu}$ restrict to the same representation of $\SLL(n)$ if and only if  $\killing(\lambda)=\killing(\mu)$.

Let $\operatorname{Fl}(n)=\Fl(\Bbb{C}^n)$ denote the complete flag variety parameterizing
complete flags of vector spaces
$F_{\bullet}: 0\subsetneq F_1\subsetneq F_2\subsetneq \dots \subsetneq F_{n}=\Bbb{C}^n$ in $\Bbb{C}^n$. For each irreducible representation
$V_{\lambda}$ of $\operatorname{GL}(n)$, there is a $\operatorname{GL}(n)$-equivariant line bundle $L_{\lambda}$ on $\operatorname{Fl}(n)$
such that $H^0(\operatorname{Fl}(n),L_{\lambda})=V_{\lambda}^*$ as representations of $\operatorname{GL}(n)$ (see Definition \ref{line}). Recall that $\Pic(\Fl(n))\tensor\Bbb{Q}=\frh_{n,\Bbb{Q}}^*$, and all line bundles on $\Fl(n)$ are canonically $\SLL(n)$ linearized.
\begin{remark}\label{identify}
We use \eqref{formule1} to identify
$\frh_{n,\Bbb{Q}}$ with the rational Picard group $\Pic_{\Bbb{Q}}(\Fl(n))=\Pic(\Fl(n))\tensor \Bbb{Q}$. This restricts  to an identification of $\frh^{+}_{n,\Bbb{Q}}$
and $\Pic^{+}_{\Bbb{Q}}(\Fl(n))$ defined as the $\Bbb{Q}$-span of effective line bundles.
\end{remark}

The following well known result (see e.g., \cite{FBulletin}) gives a characterization of the eigencone in terms of invariants of tensor products:
\begin{proposition}\label{wellknown1}
Let $V_{\lambda_1},V_{\lambda_2},\dots, V_{\lambda_s}$ be irreducible representations of $\SLL(n)$.
The following are equivalent:
\begin{enumerate}
\item $(V_{N\lambda_1}\tensor V_{N\lambda_2}\tensor\dots\tensor V_{N\lambda_s})^{\SLL(n)}\neq 0$ for some positive integer $N$,
\item $H^0(\operatorname{Fl}(n)^s , L^N)^{\SLL(n)} \neq 0$ for some positive integer $N>0$ where $L=L_{\lambda_1}\boxtimes L_{\lambda_2}\boxtimes\dots\boxtimes L_{\lambda_s}$.
\item $(\killing(\lambda_1),\killing(\lambda_2),\dots\killing(\lambda_s))\in\Gamma_n(s)$.
\end{enumerate}
\end{proposition}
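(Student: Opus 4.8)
The plan is to establish (1)$\Leftrightarrow$(2) by Borel--Weil theory together with the Künneth formula, and (2)$\Leftrightarrow$(3) by the Kempf--Ness correspondence between geometric invariant theory quotients and symplectic reductions. For (1)$\Leftrightarrow$(2): by Definition~\ref{line}, and since $L_{\lambda_i}^N=L_{N\lambda_i}$, we have $H^0(\Fl(n),L_{\lambda_i}^N)=V_{N\lambda_i}^*$ as $\SL(n)$-modules; and since $L^N=L_{N\lambda_1}\boxtimes\cdots\boxtimes L_{N\lambda_s}$ the Künneth isomorphism gives
\[
H^0(\Fl(n)^s, L^N)\;=\;\bigotimes_{i=1}^s H^0(\Fl(n),L_{N\lambda_i})\;=\;\bigotimes_{i=1}^s V_{N\lambda_i}^*\;=\;\Bigl(\bigotimes_{i=1}^s V_{N\lambda_i}\Bigr)^{\!*}
\]
as $\SL(n)$-modules (the isomorphism being equivariant for the diagonal $\SL(n)$). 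For a reductive group $G$ and a finite-dimensional $G$-module $W$, decomposing $W$ into its trivial isotypic summand and a complementary submodule and using that the dual of a nontrivial irreducible is a nontrivial irreducible gives $(W^*)^G\cong(W^G)^*$; hence $(W^*)^G\ne 0\iff W^G\ne 0$, and applying this with $W=\bigotimes_i V_{N\lambda_i}$ yields (1)$\Leftrightarrow$(2), in fact for each fixed $N$.

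For (2)$\Leftrightarrow$(3): replace each factor $\Fl(n)$ by the partial flag variety $X_i:=\GL(n)/P_{\lambda_i}$, where $P_{\lambda_i}$ is the parabolic stabilizing $\lambda_i$; then $L_{\lambda_i}$ descends to an \emph{ample} line bundle $\bar L_i$ on $X_i$ with $H^0(X_i,\bar L_i^N)=H^0(\Fl(n),L_{\lambda_i}^N)$ for all $N$, and $L$ descends to the ample bundle $\bar L:=\bar L_1\boxtimes\cdots\boxtimes\bar L_s$ on $X:=\prod_i X_i$. Thus (2) is equivalent to the graded ring $\bigoplus_{N\ge 0}H^0(X,\bar L^N)^{\SL(n)}$ being nonzero in positive degree, i.e.\ to the GIT quotient $X\git_{\bar L}\SL(n)=\operatorname{Proj}\bigoplus_N H^0(X,\bar L^N)^{\SL(n)}$ being nonempty, i.e.\ to $X$ admitting an $\bar L$-semistable point. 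On the other hand each $X_i$ is $\SU(n)$-equivariantly symplectomorphic to the coadjoint orbit $\mathcal{O}_{x_i}$ of traceless Hermitian matrices with eigenvalue vector $x_i=\killing(\lambda_i)$, the Kirillov--Kostant--Souriau form corresponding to a positive multiple of the curvature of $\bar L_i$ (this is the assertion that $L_{\lambda_i}$ prequantizes $\mathcal{O}_{x_i}$, with the normalization recorded in \eqref{formule1}). Under this identification the moment map for the diagonal $\SU(n)$-action on $X=\prod_i\mathcal{O}_{x_i}$ is $(A_1,\dots,A_s)\mapsto A_1+\cdots+A_s$, whose image contains $0$ precisely when (3) holds. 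The Kempf--Ness theorem, in the form $\mu^{-1}(0)/\SU(n)\cong X\git_{\bar L}\SL(n)$, then shows that $X$ admits a semistable point if and only if $0$ lies in the image of $\mu$; this establishes (2)$\Leftrightarrow$(3).

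The only genuinely non-formal input is the identification, in the previous paragraph, of the polarized partial flag variety $(X_i,\bar L_i)$ with the coadjoint orbit $\mathcal{O}_{x_i}$ carrying its Kirillov--Kostant--Souriau form and tautological moment map, normalized compatibly with \eqref{formule1}; granting this, the equivalences are entirely standard: (2) holds iff the invariant ring is nonzero in positive degree, iff the GIT quotient is nonempty, iff a semistable point exists, iff $0$ lies in the image of $\mu$, iff (3) holds. A purely algebraic proof of (1)$\Leftrightarrow$(3), via the asymptotics of tensor-product multiplicities and the Hilbert--Mumford numerical criterion, is also available, but the symplectic route above is the most economical; in any case the proposition is classical, and we merely recall it, following \cite{FBulletin}.
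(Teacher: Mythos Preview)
The paper does not supply its own proof of this proposition; it is stated as a well-known result with a reference to \cite{FBulletin}. Your argument is correct and is exactly the standard one: (1)$\Leftrightarrow$(2) is immediate from Borel--Weil (as recorded in Definition~\ref{line}) together with the K\"unneth formula and the observation that $(W^*)^G\neq 0\iff W^G\neq 0$ for finite-dimensional representations of a reductive group; and (2)$\Leftrightarrow$(3) follows from the Kempf--Ness theorem after passing to the partial flag varieties on which the $L_{\lambda_i}$ become ample and identifying these polarized varieties with coadjoint orbits of traceless Hermitian matrices equipped with the Kirillov--Kostant--Souriau form, so that the diagonal moment map is the sum $(A_1,\dots,A_s)\mapsto\sum_i A_i$. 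The only point requiring care, which you flag, is the compatibility of normalizations between \eqref{formule1} and the prequantization; this is routine. There is nothing to compare against in the paper itself.
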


\begin{remark}\label{semi}
Let $\operatorname{Tens}_{n,\Bbb{Q}}(s)$ be the semigroup of $s$-tuples $(\lambda_1,\dots,\lambda_s)\subset (\frh_{n,\Bbb{Q}}^*)^s$  of dominant rational  weights of $\operatorname{SL}(n)$ such that  $(V_{N\lambda_1}\tensor V_{N\lambda_2}\tensor\dots \tensor V_{N\lambda_s})^{\SLL(n)}\neq 0$ for some positive integer  $N$. Then
$\operatorname{Tens}_{n,\Bbb{Q}}(s)$ is identified under the isomorphism \eqref{formule1} with $(\frh_{+,n,\Bbb{Q}})^s\cap \Gamma_n(s)$, we will denote the latter set by $\gammanq\subset \frh_{n,\Bbb{Q}}^s$. Therefore $\Gamma_n(s)$ is a rational polyhedral cone, and we need to determine extremal rays of this rational polyhedral cone.
\end{remark}

\subsection{The inequalities definining $\Gamma_n(s)$}
 To describe the Klyachko inequalities we introduce some notation:

\begin{defi}\label{closedsvv}
Let $I=\{i_1<\dots<i_r\}\subset [n]=\{1,\dots,n\}$ be a subset with $r$ elements, with $1\leq r\leq n-1$. This defines a Schubert variety $\Omega_I(F_{\bullet})$ in the Grassmannian $\operatorname{Gr}(r,n)=\operatorname{Gr}(r,\Bbb{C}^n)$
\begin{equation}\label{closedsv}
\Omega_I(F_{\bullet})=\{V\in \operatorname{Gr}(r,\Bbb{C}^n)\mid \dim V\cap F_{i_j}\geq j, \forall j\in[r]\}.
\end{equation}
 The cycle class of $\Omega_I(F_{\bullet})$, which lives in
 $H^{2|\sigma_I|}(\operatorname{Gr}(r,n)),$
is denoted by $\sigma_I$, here
\begin{equation}\label{codimI}
|\sigma_I|=\sum_{a=1}^r(n-r+a-i_a).
\end{equation}
Each $I$ as above also gives a permutation $w_I$ of $[n]$ as follows. Write $[n]-I=\{j_1,\dots,j_{n-r}\}$, Then $w_I(a)=i_a$ if $1\leq a\leq r$ and $w_I(a)=j_{a-r}$ if $r<a\leq n$.
\end{defi}

\begin{theorem}\label{KlBe1}\cite{Kly,Totaro,BLocal}
Suppose $x_1,\dots,x_s$ are $s$ elements in the Weyl chamber \eqref{weylC}. Then
$(x_1,x_2,\dots,x_s) \in \Gamma_n(s)$
if only if for  every tuple $(r,n,I_1,I_2,\dots,I_s)$ with $I_1$, $I_2$,\dots $,I_s$ subsets
of $[n]$ of cardinality $r$ each, with $1\leq r<n$ and
\begin{equation}\label{dagger}
\sigma_{I_1}\sigma_{I_2}\dots \sigma_{I_s}=[\operatorname{pt}]\in H^{2r(n-r)}(\Gr(r,n)),
\end{equation}
where $[\operatorname{pt}]$ is the cycle class of a point, the following inequality holds
\begin{equation}\label{evineq}
\sum_{j=1}^s\sum_{a\in I_j}x^{(a)}_j\leq 0.
\end{equation}
\end{theorem}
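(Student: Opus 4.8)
The plan is to prove the two implications of the equivalence separately: the forward direction (``only if'') by a Rayleigh–quotient computation, and the reverse direction (``if'') by geometric invariant theory, closing with an input from \cite{BLocal}.

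\emph{Necessity.} Suppose $(x_1,\dots,x_s)\in\Gamma_n(s)$, witnessed by traceless Hermitian matrices $A_1,\dots,A_s$ with $\operatorname{Spec}(A_i)=x_i$ and $\sum_iA_i=0$. For each $i$ let $E^i_\bullet$ be the eigenflag of $A_i$, i.e.\ $E^i_j$ is spanned by the eigenvectors for the $j$ largest eigenvalues $x_i^{(1)}\ge\dots\ge x_i^{(j)}$. Fix a tuple $(r,n,I_1,\dots,I_s)$ with $\prod_i\sigma_{I_i}=[\operatorname{pt}]$. First I would check that $\bigcap_i\Omega_{I_i}(E^i_\bullet)\ne\emptyset$ even though the eigenflags need not be in general position: the incidence variety
\[
\mathcal I=\{(V,F^1_\bullet,\dots,F^s_\bullet):V\in\Omega_{I_i}(F^i_\bullet)\ \forall i\}\subset\Gr(r,n)\times\Fl(n)^s
\]
is irreducible (it fibers over $\Gr(r,n)$ with fibers a product of Schubert varieties in $\Fl(n)$) and of dimension $\dim\Gr(r,n)+s\dim\Fl(n)-\sum_i|\sigma_{I_i}|=s\dim\Fl(n)$; since by Kleiman transversality the generic fibre of the projection $\mathcal I\to\Fl(n)^s$ is the nonempty $0$-cycle with class $\prod_i\sigma_{I_i}=[\operatorname{pt}]$, this projection is dominant, hence (being proper) surjective, so every fibre — in particular the one over $(E^1_\bullet,\dots,E^s_\bullet)$ — is nonempty. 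Pick $V$ in that intersection. For each $i$, the conditions $\dim(V\cap E^i_{b})\ge\#\{a\in I_i:a\le b\}$ let one choose an orthonormal basis $v_1,\dots,v_r$ of $V$ with $v_j$ in the span of the eigenvectors of $A_i$ for the $I_i(j)$ largest eigenvalues, where $I_i(j)$ is the $j$-th smallest element of $I_i$; then $\langle A_iv_j,v_j\rangle\ge x_i^{(I_i(j))}$, so $\operatorname{tr}(\pi_VA_i\pi_V)=\sum_j\langle A_iv_j,v_j\rangle\ge\sum_{a\in I_i}x_i^{(a)}$. Summing over $i$ and using $\sum_iA_i=0$ gives $0=\operatorname{tr}\!\big(\pi_V(\textstyle\sum_iA_i)\pi_V\big)=\sum_i\operatorname{tr}(\pi_VA_i\pi_V)\ge\sum_i\sum_{a\in I_i}x_i^{(a)}$, which is \eqref{evineq}.

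\emph{Sufficiency.} Since the set of $s$-tuples satisfying all the inequalities \eqref{evineq} is a closed rational polyhedral cone, $\Gamma_n(s)$ is closed, and rational points are dense, it is enough to treat rational tuples; after scaling I may assume $x_i=\killing(\lambda_i)$ for dominant integral weights $\lambda_i$ of $\SLL(n)$. By Proposition \ref{wellknown1} it then suffices to show that $H^0(\Fl(n)^s,L^N)^{\SLL(n)}\ne0$ for some $N>0$, where $L=L_{\lambda_1}\boxtimes\dots\boxtimes L_{\lambda_s}$, i.e.\ that the semistable locus $(\Fl(n)^s)^{ss}_L$ for the diagonal $\SLL(n)$-action is nonempty. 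By the Hilbert–Mumford criterion, a point $(F^1_\bullet,\dots,F^s_\bullet)$ is semistable iff $\sum_i\mu^{L_{\lambda_i}}(F^i_\bullet,\tau)\le0$ for every one-parameter subgroup $\tau$; conjugating, I may take $\tau$ diagonal with weights $m_1\ge\dots\ge m_n$, $\sum m_k=0$, with associated parabolic and partial flag $E_\bullet(\tau)$, and then $\mu^{L_{\lambda_i}}(F^i_\bullet,\tau)$ is an explicit linear expression in the $m_k$ whose coefficients are determined by $\lambda_i$ and the relative position of $F^i_\bullet$ with $E_\bullet(\tau)$. I would extract the inequalities by running the standard analysis of the Hesselink–Kirwan stratification of the unstable locus: $(\Fl(n)^s)^{ss}_L=\emptyset$ iff some unstable stratum $S_\beta$ is dense, and for each destabilizing direction $\beta$ and each relative-position datum the stratum has a dimension and an average weight computed from Schubert calculus. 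The points to verify are: (i) it suffices to take $\beta$ proportional to one of the rectangular cocharacters $\tau_r=(\underbrace{n-r,\dots,n-r}_{r},\underbrace{-r,\dots,-r}_{n-r})$, corresponding to the maximal parabolics, i.e.\ to the Grassmannians $\Gr(r,n)$, the other strata having strictly smaller dimension or a filling condition implied by these; (ii) for $\beta\propto\tau_r$ the relative positions of the $F^i_\bullet$ with $E_r(\tau_r)\in\Gr(r,n)$ are recorded by $r$-element subsets $I_1,\dots,I_s\subset[n]$, the stratum with data $(r,I_1,\dots,I_s)$ has dimension $\dim\Fl(n)^s$ exactly when $\sum_i|\sigma_{I_i}|=r(n-r)$, i.e.\ $\prod_i\sigma_{I_i}$ is a positive multiple of $[\operatorname{pt}]$, and in that case its average weight is positive precisely when \eqref{evineq} fails. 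Hence, if every inequality \eqref{evineq} with $\prod_i\sigma_{I_i}=c[\operatorname{pt}]$ for some $c\ge1$ holds, then no stratum fills $\Fl(n)^s$, the semistable locus is nonempty, and $(x_1,\dots,x_s)\in\Gamma_n(s)$. This is the theorem for the a priori larger family indexed by $c\ge1$ (Klyachko, Totaro).

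\emph{The main obstacle.} What the GIT argument does not give by itself is the reduction, due to \cite{BLocal}, that the subfamily $c=1$ already cuts out $\Gamma_n(s)$: one must show that an inequality \eqref{evineq} coming from data with $c\ge2$ (where $c$ is the coefficient of $[\operatorname{pt}]$ in $\prod_i\sigma_{I_i}$) is redundant, being a consequence of the $c=1$ inequalities, possibly for Grassmannians $\Gr(r',n)$ with $r'<r$. This is the hard part, and I would not reprove it; in \cite{BLocal} it is obtained via the geometric recursion for the multiplicities on Grassmannians (``geometric Horn''), run inductively on $n$, which expresses a $c\ge2$ transversal intersection in terms of data with $c=1$ on smaller Grassmannians. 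Combining necessity, the $c\ge1$ sufficiency above, and this reduction yields the stated equivalence.
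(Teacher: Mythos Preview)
The paper does not prove Theorem~\ref{KlBe1}; it is stated as a result from the literature with the citations \cite{Kly,Totaro,BLocal} attached, and is then used as input. So there is no ``paper's own proof'' to compare against. Your outline is essentially the standard argument behind those citations, and it is correct in substance.

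Your necessity argument is clean and complete: the incidence-variety surjectivity (properness plus irreducibility plus nonzero generic fibre) guarantees $\bigcap_i\Omega_{I_i}(E^i_\bullet)\ne\emptyset$ for arbitrary flags, and the Rayleigh--quotient computation via an adapted orthonormal basis of $V$ (a different basis for each $i$, which is fine since the trace is basis-independent) gives \eqref{evineq}. This is exactly the Totaro/Helmke--Rosenthal proof.

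Your sufficiency sketch is correct in spirit but the Hesselink--Kirwan packaging is heavier than needed and makes the verification of your points (i) and (ii) less transparent than the direct route: pick a \emph{generic} point $(F^1_\bullet,\dots,F^s_\bullet)\in\Fl(n)^s$ and apply Hilbert--Mumford. For a one-parameter subgroup, convexity of the Mumford index in the coweight reduces to the fundamental coweights $\tau_r$; for such $\tau_r$ the limit picks out an $r$-plane $V$, and the index is precisely the left-hand side of \eqref{evineq} for the subsets $I_i$ recording the Schubert positions of $V$ relative to $F^i_\bullet$. Genericity of the flags forces $\prod_i\sigma_{I_i}\ne 0$, i.e.\ a positive multiple of $[\operatorname{pt}]$, and then the hypothesis (in its $c\ge 1$ form) says every such index is $\le 0$, so the point is semistable. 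This is Klyachko's argument, and it is what your stratification outline is encoding; phrasing it this way avoids having to identify which strata can be dense.

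You correctly isolate the nontrivial remaining step: the passage from the $c\ge 1$ system to the $c=1$ system is the content of \cite{BLocal} (the geometric Horn recursion), and is not something the GIT argument alone yields. The paper takes exactly this for granted via the citation.
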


\subsection{Basic building blocks for extremal rays}\label{namon1}
The basic building blocks come about in the following way:
\begin{enumerate}
\item Fix $(r,n,I_1,I_2,\dots,I_s)$ satisfying  \eqref{dagger}. Then equality in inequality \eqref{evineq}, i.e.,
\begin{equation}\label{eveq}
\sum_{j=1}^s\sum_{a\in I_j}x^{(a)}=0.
\end{equation}
defines a facet $\mf$ of $\Gamma_n(s)$ by \cite{KTW}. Let $\mf_{\Bbb{Q}}=\mf\cap \gammanq$.
\item Pick a $j_0\in [s]$ and choose a set $T\subset [n]$ of cardinality $r$ so that $\Omega_{T}(F_{\bull})$ is a codimension one subvariety of $\Omega_{I_{j_0}}(F_{\bull})$ for any choice of flags $F_{\bull}$. This just means (see Lemma \ref{plum}) that $T=(I_{j_0}-\{a_0\})\cup\{a_0-1\}$ for some $a_0\in I_{j_0}$ such that $a_0>1$ and $a_0-1\not\in I_{j_0}$ (this index $a_0$ is determined by $I_{j_0}$ and $T$). We introduce the notation $I^{+,b}=(I-\{b\})\cup \{b-1\}$ if $b>1$, $b\in I$ and $b-1\not\in I$ (the plus in the exponent is to indicate that the codimension of the corresponding Schubert variety in $\Gr(r,n)$ has increased by one).
\item Let $A_k=I_k$ for $k\neq {j_0}, \ k=1,\dots,s$ and $A_{j_0}=T=I_{j_0}^{+,a_0}$.
\end{enumerate}
Therefore we have made a choice of the tuple $(r,n,I_1,\dots,I_s)$ satisfying \eqref{dagger} as well as the pair $(j_0,a_0)$.
\begin{defi}\label{basico}
Consider the locus
$$D=D(A_1,\dots,A_s)=D(I_1,\dots,I_{j_0-1},I_{j_0}^{+,a_0},I_{j_0+1},\dots,I_s)\subset \Fl(n)^s$$
consisting of $(F_{\bull}(1), F_{\bull}(2),\dots, F_{\bull}(s))$ such that
\begin{equation}\label{picky}
\bigcap_{i=1}^s \Omega_{A_i}(F_{\bull}(i))\neq \emptyset \subseteq \Gr(r,n).
\end{equation}
In Proposition \ref{DaDivisor}, we will show that $D$ is a $\op{SL}(r)$ invariant divisor in $\Fl(n)^s$. Note that $D(A_1,\dots,A_s)$ parameterizes ``special points'' of the moduli stack $\Fl(n)^s/\SLL(n)$: points  $(F_{\bull}(1), F_{\bull}(2),\dots, F_{\bull}(s))$ where the intersection \eqref{picky} is non-empty. We therefore refer to $D(A_1,\dots,A_s)$ as a ``modular intersection locus".
\end{defi}
Now $L=\mathcal{O}(D)$ is a line bundle on $\Fl(n)^s$ with a non-zero diagonal $\operatorname{SL}(n)$ invariant section, since $D$ is invariant under the diagonal action of $\SLL(n)$ on $\Fl(n)^s$. Write $L=L_{\lambda_1}\boxtimes L_{\lambda_2}\dots\boxtimes L_{\lambda_s}$.
Here $\lambda_i$ are dominant integral weights for $\SLL(n)$. This gives (by Proposition \ref{wellknown1} above), an element
\begin{equation}\label{pita}
[D(A_1,\dots,A_s)]=(\killing(\lambda_1),\killing(\lambda_2),\dots,\killing(\lambda_s))\in \gammanq.
\end{equation}
\begin{theorem}\label{one}
\begin{enumerate}
\item[(1)] The line bundle $L=\mathcal{O}(D)$ gives an extremal ray $\Bbb{Q}_{\geq 0} (\killing(\lambda_1),\killing(\lambda_2),\dots,\killing(\lambda_s))$ of $\gammanq$.
\item[(2)] This extremal ray lies on the facet $\mf_{\Bbb{Q}}$.
\item[(3)] The line bundle $L$ has the following rigidity property, reminiscent of a conjecture of Fulton\footnote{Let  $\lambda_1,\dots,\lambda_s$ be dominant integral weights of $\SLL(n)$, and set $f(N)=\dim (V_{N\lambda_1}\tensor V_{N\lambda_2}\tensor\dots\tensor V_{N\lambda_s})^{\SLL(n)}$. Fulton conjectured that $f(1)=1$ implies $f(N)=1$ for all positive integers $N$. This conjecture was proved in \cite{KTW}. A generalization to all groups appears in \cite{BKR}.}:
$\dim H^0(\Fl(n)^s,L^N)^{\SLL(n)}=1$ for all positive integers $N$. That is,
 $(V_{N\lambda_1}\tensor V_{N\lambda_2}\tensor\dots\tensor V_{N\lambda_s})^{\SLL(n)}$ is one-dimensional for all positive integers $N$.
\end{enumerate}
\end{theorem}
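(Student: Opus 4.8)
The plan is to establish parts (2), (3) and (1) in that order, with (3) as the technical core and (1) a formal consequence of it.

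\emph{Part (2).} First I would compute the class $[\mathcal{O}(D)]=(\lambda_1,\dots,\lambda_s)\in\Pic(\Fl(n))^s$ explicitly. Since $D$ is the image of the irreducible incidence variety
\[
\mathcal{X}=\{(V,F_\bullet(1),\dots,F_\bullet(s))\in\Gr(r,n)\times\Fl(n)^s:\ V\in\textstyle\bigcap_i\Omega_{A_i}(F_\bullet(i))\},
\]
generically one-to-one onto $D$ (Proposition \ref{DaDivisor}), the class $[\mathcal{O}(D)]$ is the pushforward of $[\mathcal{X}]$ under the projection $\Gr(r,n)\times\Fl(n)^s\to\Fl(n)^s$; expanding $[\mathcal{X}]=\prod_i[Z_{A_i}]$ with $Z_A=\{(V,F_\bullet):V\in\Omega_A(F_\bullet)\}$ in the Künneth decomposition and extracting the coefficient of $[\operatorname{pt}]\in H^*(\Gr(r,n))$ identifies each difference $\lambda_j^{(k)}-\lambda_j^{(k+1)}=D\cdot C_k^{(j)}$ with an honest Schubert intersection number on $\Gr(r,n)$, where $C_k^{(j)}\subset\Fl(n)^s$ is the Schubert line moving only the $k$-th step of the $j$-th flag. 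The membership $(\killing(\lambda_1),\dots,\killing(\lambda_s))\in\mf_\Bbb{Q}$, i.e.\ $\sum_j\sum_{a\in I_j}\killing(\lambda_j)^{(a)}=0$, is then a combinatorial identity in these Schubert numbers, a consequence of $\sum_i|\sigma_{I_i}|=r(n-r)$ together with $A_{j_0}=I_{j_0}^{+,a_0}$. (Conceptually, the equality says that the Hilbert--Mumford weight $\mu^{\mathcal{O}(D)}(x,\delta_{V_0})$ vanishes for a generic $x\in\Fl(n)^s$ and the one-parameter subgroup $\delta_{V_0}$ attached to the unique point $V_0$ of $\bigcap_i\Omega_{I_i}(F_\bullet(i))$ --- reflecting that $[\mathcal{O}(D)]$ lies on the wall $\mf$, so that a generic point is strictly semistable but not stable.) One byproduct of the same computation, used below, is the strict descent $\lambda_{j_0}^{(a_0-1)}-\lambda_{j_0}^{(a_0)}=D\cdot C_{a_0-1}^{(j_0)}\ge 1$: in a general pencil of $(a_0-1)$-st steps of the $j_0$-th flag one can exhibit a flag configuration on $D$ --- with the other data general, let $V_0$ be the unique point of $\bigcap_i\Omega_{I_i}$ it determines; since $a_0\in I_{j_0}$ and $a_0-1\notin I_{j_0}$, a completing $(a_0-1)$-st step can be chosen so that in addition $V_0\in\Omega_{I_{j_0}^{+,a_0}}(F_\bullet(j_0))$, whence $V_0\in\bigcap_i\Omega_{A_i}$.

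\emph{Part (3).} Because $D$ is an irreducible $\SLL(n)$-invariant divisor, its defining section $\theta_D$ is a nonzero $\SLL(n)$-invariant section of $\mathcal{O}(D)$, so $\dim H^0(\Fl(n)^s,\mathcal{O}(ND))^{\SLL(n)}\ge 1$. The heart of the matter is the claim that \emph{every point of $D$ is unstable for the linearization $\mathcal{O}(D)$.} Given $x=(F_\bullet(1),\dots,F_\bullet(s))\in D$, pick $V_1\in\bigcap_i\Omega_{A_i}(F_\bullet(i))$ and let $\delta_{V_1}$ be the one-parameter subgroup of $\SLL(n)$ acting with weights $n-r$ on $V_1$ and $-r$ on a complement. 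By additivity of the Hilbert--Mumford weight over the box factors together with the standard computation on flag varieties (with the sign convention under which semistability means $\mu\ge 0$ for all one-parameter subgroups),
\[
\mu^{\mathcal{O}(D)}(x,\delta_{V_1})=-n\sum_{j=1}^s\sum_{a\in I(V_1,F_\bullet(j))}\killing(\lambda_j)^{(a)},
\]
where $I(V,F_\bullet)\in\binom{[n]}{r}$ records the jumps of $m\mapsto\dim(V\cap F_m)$. As $V_1\in\Omega_{A_j}(F_\bullet(j))$ we have $I(V_1,F_\bullet(j))\preceq A_j$ componentwise, and as $\lambda_j$ is dominant the map $I\mapsto\sum_{a\in I}\killing(\lambda_j)^{(a)}$ is antitone for $\preceq$; hence
\begin{align*}
\sum_{j}\sum_{a\in I(V_1,F_\bullet(j))}\killing(\lambda_j)^{(a)}
&\ \ge\ \sum_{j}\sum_{a\in A_j}\killing(\lambda_j)^{(a)}
= \sum_{j}\sum_{a\in I_j}\killing(\lambda_j)^{(a)}+\bigl(\lambda_{j_0}^{(a_0-1)}-\lambda_{j_0}^{(a_0)}\bigr)\\
&= 0+\bigl(\lambda_{j_0}^{(a_0-1)}-\lambda_{j_0}^{(a_0)}\bigr)\ \ge\ 1,
\end{align*}
using $A_j=I_j$ for $j\ne j_0$, the identity $\sum_{a\in I_{j_0}^{+,a_0}}\killing(\lambda_{j_0})^{(a)}=\sum_{a\in I_{j_0}}\killing(\lambda_{j_0})^{(a)}+\lambda_{j_0}^{(a_0-1)}-\lambda_{j_0}^{(a_0)}$, part (2), and the strict descent. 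Thus $\mu^{\mathcal{O}(D)}(x,\delta_{V_1})\le -n<0$ and $x$ is unstable, proving the claim.

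\emph{Conclusion of (3) and proof of (1).} Since every point of $D$ is $\mathcal{O}(D)$-unstable, the $\mathcal{O}(D)$-semistable locus lies in $\Fl(n)^s\setminus D$, so every $\SLL(n)$-invariant section $\sigma$ of $\mathcal{O}(ND)$ vanishes on $D$; hence $\sigma=\theta_D\cdot\sigma'$ with $\sigma'$ an $\SLL(n)$-invariant section of $\mathcal{O}((N-1)D)$ (there being no nontrivial characters of $\SLL(n)$). Induction on $N$ (base case $H^0(\mathcal{O})=\Bbb{C}$) gives $H^0(\Fl(n)^s,\mathcal{O}(ND))^{\SLL(n)}=\Bbb{C}\,\theta_D^N$ for all $N\ge 0$, which is (3); alternatively one may prove the $N=1$ case and invoke the generalization of Fulton's conjecture \cite{KTW,BKR}. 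For (1): if $[\mathcal{O}(D)]=p_1+p_2$ with $p_1,p_2\in\gammanq$, replace $\mathcal{O}(D)$ by a sufficiently divisible power $\mathcal{O}(D)^{\otimes M}=\mathcal{O}(M_1)\otimes\mathcal{O}(M_2)$ so that the $M_i$ are integral and, by Proposition \ref{wellknown1}, each $\mathcal{O}(M_i)$ carries a nonzero $\SLL(n)$-invariant section $\tau_i$; then $\tau_1\tau_2\in H^0(\Fl(n)^s,\mathcal{O}(MD))^{\SLL(n)}=\Bbb{C}\,\theta_D^M$ by (3), so $\operatorname{div}(\tau_1)+\operatorname{div}(\tau_2)=MD$. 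As $D$ is irreducible, $\operatorname{div}(\tau_i)=\ell_i\,D$ with $\ell_i\ge 0$ and $\ell_1+\ell_2=M$, whence $p_i=(\ell_i/M)\,[\mathcal{O}(D)]\in\Bbb{Q}_{\ge 0}[\mathcal{O}(D)]$; since moreover $[\mathcal{O}(D)]\ne 0$ ($D$ being a nonempty effective divisor on the projective variety $\Fl(n)^s$), $\Bbb{Q}_{\ge 0}[\mathcal{O}(D)]$ is an extremal ray, and it lies on $\mf_\Bbb{Q}$ by (2).

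The main obstacle is the Schubert-theoretic input of part (2): one needs both the explicit class $[\mathcal{O}(D)]$ (to place the ray precisely on the wall $\mf$) and the strict descent $\lambda_{j_0}^{(a_0-1)}>\lambda_{j_0}^{(a_0)}$ of the $j_0$-th weight (to make the instability argument strict). Granted these, the identification of $D$ with the $\mathcal{O}(D)$-unstable locus, and hence the rigidity (3) and the extremality (1), are formal.
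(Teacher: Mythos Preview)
Your approach to part (3) is genuinely different from the paper's and, once (2) is in hand, more elementary. The paper proves (3) by showing that $H^0(Y^0-R^0,\mathcal{O})^{\SLL(n)}=\Bbb{C}$ (Theorem \ref{may11}), which in turn rests on Fulton's conjecture (KTW) applied to the ramification class on the Levi stack $\mathcal{S}$; it then uses Zariski's main theorem to embed $Y^0-R^0$ into $\Fl(n)^s\setminus D$ and concludes that invariant functions on $\Fl(n)^s\setminus D$ are constant. You bypass all of this by showing directly, via Hilbert--Mumford, that every point of $D$ is $\mathcal{O}(D)$-unstable and hence that invariant sections must be powers of $\theta_D$. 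This is correct and is in fact the same mechanism the paper uses later (the unnumbered Proposition in Section 4.2) to prove Theorem \ref{two}; you have recognized that this argument already suffices for Theorem \ref{one}(3). Your derivation of (1) from (3) is the same as the paper's Lemma \ref{Fproperty}.

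The gap is your proof of (2). Your argument for (3) needs the exact equality $\sum_j\sum_{a\in I_j}\kappa(\lambda_j)^{(a)}=0$, not just the inequality $\le 0$ coming from semistability, and the strict descent alone only buys you one unit. You assert this equality is ``a combinatorial identity in these Schubert numbers, a consequence of $\sum_i|\sigma_{I_i}|=r(n-r)$'', but this is not justified; writing it out via Proposition \ref{egreggy} gives a nontrivial weighted sum of structure constants that does not collapse by codimension bookkeeping alone. The paper's proof of (2) is not combinatorial: it observes that $\theta_D$ does not vanish on the image of $\mathcal{S}-R_{\mathcal{S}}$ (since that image lies in $\pi(Y^0-R^0)\subseteq\Fl(n)^s\setminus D$ by Zariski's main theorem), so the Mumford index for $\delta_{V_0}$ at such a fixed point vanishes --- which is precisely the equality in \eqref{evineq}. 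If you want to keep your route self-contained you need either this geometric input (a generic ``split'' point $x_0$ with $I(V_0,F_\bullet(j))=I_j$ lies off $D$) or an honest proof of the Schubert identity; as written, (2) is asserted rather than proved, and your (3) depends on it.
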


  The following proposition gives formulas for $\lambda_i=(\lambda_i^{(1)},\dots,\lambda_i^{(n)})$, $i=1,\dots,s$, where we set $\lambda_i^{(n)}=0$. For a subset $A\subset[n]$ of cardinality $r$, and $b\in A$ such that $b<n$ and $b+1\not\in A$, introduce
  the notation $A^{-,b}=(A-\{b\})\cup\{b+1\}$ (the minus sign is to indicate that the codimension decreases under the operation ).
\begin{proposition}\label{egreggy}
For any $b\in [n]$, $b<n$,
\begin{enumerate}
\item $\lambda^{(b)}_i-\lambda^{(b+1)}_i=0$ if $b\not\in A_i$.
\item  $\lambda^{(b)}_i- \lambda^{(b+1)}_i=0$ if $b\in A_i$ and $b+1\in A_i$.
\item Suppose $b\in A_i$ but $b+1\not\in A_i$.
$\lambda^{(b)}_i-\lambda^{(b+1)}_i=c_{i,b}$, where $c_{i,b}$ is the (possibly zero) intersection number
$$\sigma_{A_1}\dots\sigma_{A_{i-1}}\sigma_{A_i^{-,b}}\sigma_{A_{i+1}}\dots \sigma_{A_s}=c_{i,b}[\operatorname{pt}]\in H^{2r(n-r)}(\Gr(r,n)).$$
\end{enumerate}
\end{proposition}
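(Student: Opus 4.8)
The plan is to compute the class of the line bundle $L=\mathcal{O}(D)$ in $\Pic(\Fl(n)^s)=\bigoplus_{i=1}^{s}\bigoplus_{b=1}^{n-1}\Bbb{Z}\cdot L_{\omega_b}^{(i)}$, where $L_{\omega_b}^{(i)}$ is the pullback of the Pl\"ucker bundle $\mathcal{O}_{\Gr(b,n)}(1)$ under the composite of the $i$-th projection $\Fl(n)^s\to\Fl(n)$ with the map remembering the $b$-th step. With the normalization $\lambda_i^{(n)}=0$ one has $L_{\lambda_i}=\bigotimes_{b=1}^{n-1}L_{\omega_b}^{\otimes(\lambda_i^{(b)}-\lambda_i^{(b+1)})}$, so $\lambda_i^{(b)}-\lambda_i^{(b+1)}$ is exactly the coefficient of $L_{\omega_b}^{(i)}$ in $L$; equivalently $\lambda_i^{(b)}-\lambda_i^{(b+1)}=c_1(L)\cdot C_{i,b}$, where $C_{i,b}$ is the class of a \emph{Schubert line} in the $i$-th factor: freeze $F_{\bull}(l)$ for $l\ne i$ and $F_c(i)$ for $c\ne b$, and let $F_b(i)$ vary over the line $\Bbb{P}\bigl(F_{b+1}(i)/F_{b-1}(i)\bigr)$ of $b$-planes with $F_{b-1}(i)\subset F_b(i)\subset F_{b+1}(i)$; indeed $c_1(L_{\omega_c}^{(l)})\cdot C_{i,b}=\delta_{il}\delta_{bc}$. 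Since these $\Bbb{P}^1$'s sweep out $\Fl(n)^s$ and $D$ is a reduced irreducible divisor (Proposition~\ref{DaDivisor}), a general representative of $C_{i,b}$ is not contained in $D$ and meets it transversally, so $\lambda_i^{(b)}-\lambda_i^{(b+1)}$ equals the number of points of $\Bbb{P}^1\cap D$. Writing $W=\bigcap_{l\ne i}\Omega_{A_l}(F_{\bull}(l))\subset\Gr(r,n)$ and denoting by $F_{\bull}(i)_t$ the flag with $b$-th step $t$, \eqref{picky} gives $\Bbb{P}^1\cap D=\{\,t\in\Bbb{P}^1:W\cap\Omega_{A_i}(F_{\bull}(i)_t)\ne\emptyset\,\}$. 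For general flags, Kleiman transversality makes $W$ pure of dimension $|\sigma_{A_i}|-1$ with class $\prod_{l\ne i}\sigma_{A_l}$, using that $\sum_{l}|\sigma_{A_l}|=r(n-r)+1$ because $\sigma_{I_{j_0}^{+,a_0}}$ has codimension one more than $\sigma_{I_{j_0}}$.

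Cases (1) and (2) follow quickly. If $b\notin A_i$, then $\Omega_{A_i}(F_{\bull}(i))$ does not involve the step $F_b(i)$ by \eqref{closedsv}; and if $b=a_j$ and $b+1=a_{j+1}$ both lie in $A_i$, the defining condition $\dim(V\cap F_b(i))\ge j$ is implied by $\dim(V\cap F_{b+1}(i))\ge j+1$ because $\codim_{F_{b+1}(i)}F_b(i)=1$, so again $\Omega_{A_i}(F_{\bull}(i))$ is independent of $F_b(i)$. Thus $\Bbb{P}^1\cap D$ is either empty or all of $\Bbb{P}^1$; but $\dim W+\dim\Omega_{A_i}(F_{\bull}(i))-r(n-r)=-1$, so Kleiman transversality (applied to the general flags $F_{\bull}(l)$, $l\ne i$) forces $W\cap\Omega_{A_i}(F_{\bull}(i))=\emptyset$. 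Hence $\Bbb{P}^1\cap D=\emptyset$ and $\lambda_i^{(b)}-\lambda_i^{(b+1)}=0$.

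For case (3), where $b=a_j\in A_i$ and $b+1\notin A_i$, so that $A_i^{-,b}$ is defined, does not involve $b$, and satisfies $|\sigma_{A_i^{-,b}}|=|\sigma_{A_i}|-1$, the geometric core is the claim that for every flag $F_{\bull}(i)$ one has $\bigcup_{t\in\Bbb{P}^1}\Omega_{A_i}(F_{\bull}(i)_t)=\Omega_{A_i^{-,b}}(F_{\bull}(i))$ and that the incidence variety $\{(t,V):V\in\Omega_{A_i}(F_{\bull}(i)_t)\}\subset\Bbb{P}^1\times\Gr(r,n)$ maps birationally onto $\Omega_{A_i^{-,b}}(F_{\bull}(i))$. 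Granting this, let $\widehat{D}\subset\Fl(n)^s\times\Gr(r,n)$ be the incidence variety of tuples with $V\in\bigcap_l\Omega_{A_l}(F_{\bull}(l))$; by Proposition~\ref{DaDivisor} the projection $\pi$ to $\Fl(n)^s$ is birational onto $D$, so for a general $C_{i,b}$ the projection formula gives $\lambda_i^{(b)}-\lambda_i^{(b+1)}=c_1(\mathcal{O}(D))\cdot C_{i,b}=\deg\bigl([\widehat{D}]\cdot\pi^{*}C_{i,b}\bigr)$. The cycle $[\widehat{D}]\cdot\pi^{*}C_{i,b}$ is supported on $\Gamma:=\{(t,V):t\in\Bbb{P}^1,\ V\in W\cap\Omega_{A_i}(F_{\bull}(i)_t)\}$, and by the claim its projection to $\Gr(r,n)$ is birational onto $W\cap\Omega_{A_i^{-,b}}(F_{\bull}(i))$, which for general flags is a transverse, hence reduced, $0$-dimensional intersection of Schubert varieties with $\deg=\prod_{l\ne i}\sigma_{A_l}\cdot\sigma_{A_i^{-,b}}=c_{i,b}$. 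Carrying the genericity through these birational identifications yields $\lambda_i^{(b)}-\lambda_i^{(b+1)}=c_{i,b}$.

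It remains to prove the claim, which is where I expect the real work to lie. The inclusion $\Omega_{A_i}(F_{\bull}(i)_t)\subset\Omega_{A_i^{-,b}}(F_{\bull}(i))$ is immediate from $F_b(i)\subset F_{b+1}(i)$. For the reverse inclusion and the birationality, one works inside $F_{b+1}(i)$: for $V\in\Omega_{A_i^{-,b}}(F_{\bull}(i))$ put $U=V\cap F_{b+1}(i)$, so $\dim U\ge j$; the condition of $A_i^{-,b}$ at the largest index $a_{j-1}\le b-1$ lying below $b$ (or, if $j=1$, the inequality $r+b-1-n\le j-1$, which holds since $A_i$ has $r-j$ elements above $b$) forces $\dim(V\cap F_{b-1}(i))\ge j-1$; if $\dim U\ge j+1$ or $\dim(V\cap F_{b-1}(i))\ge j$ then $V\in\Omega_{A_i}(F_{\bull}(i)_t)$ for every $t$, and otherwise $F_b(i)=U+F_{b-1}(i)$ is the unique $b$-plane between $F_{b-1}(i)$ and $F_{b+1}(i)$ for which $V\in\Omega_{A_i}(F_{\bull}(i))$. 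The first two alternatives cut out a proper closed subset of the irreducible variety $\Omega_{A_i^{-,b}}(F_{\bull}(i))$, which --- together with the properness of $\Bbb{P}^1$ --- gives both the set-theoretic equality and the generic bijectivity. The two delicate points in a full write-up will be: (a) arranging all the Kleiman--Bertini genericity statements simultaneously, so that every intersection above is reduced of the expected dimension and each birational morphism is an isomorphism over the relevant dense open; and (b) this claim itself, which is the statement ``dual'' to Lemma~\ref{plum} (that $\Omega_{I^{+,b}}$ is a Schubert divisor in $\Omega_I$), read through the $\Bbb{P}^1$-bundle over the partial flag variety with the $b$-th step omitted.
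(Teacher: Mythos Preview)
Your argument is correct and follows essentially the same route as the paper: both compute $\lambda_i^{(b)}-\lambda_i^{(b+1)}$ by intersecting with the same Schubert line $C_{i,b}$ (the $\Bbb{P}^1$ of flags in which only the $b$-th step varies), and the geometric core in case~(3) --- that the sweep $\bigcup_{t\in\Bbb{P}^1}\Omega_{A_i}(F_\bullet(i)_t)$ equals $\Omega_{A_i^{-,b}}(F_\bullet(i))$ with the incidence birational onto it --- is exactly what the paper proves in Proposition~\ref{cycleclass}(2). The only organizational difference is that the paper packages this computation as a K\"unneth expansion of the universal Schubert class $[\Omega^{\operatorname{Univ}}_{A_i}]\in A^*(\Gr(r,n)\times\Fl(n))$ and then multiplies and pushes forward from $\Gr(r,n)\times\Fl(n)^s$, whereas you work directly downstairs on $\Fl(n)^s$ via the projection formula and the birationality $\pi_*[\widehat{D}]=[D]$ established in Proposition~\ref{DaDivisor}; these are dual formulations of the same calculation.
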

\begin{remark}\label{egreggyremark}
 The formula for $\lambda_i$ in Proposition \ref{egreggy} can be written in terms of dominant fundamental weights $\omega_b$ (here $\omega_b$ is the the $b$th exterior power of the standard representation $\Bbb{C}^n$ of $\SLL(n)$): $\lambda_i = \sum_{1\leq b<n,b\in A_i, b+1\not\in A_i} c_{i,b}\omega_b$,  where $c_{i,b}$ is the (possibly zero) intersection number
$$\sigma_{A_1}\dots\sigma_{A_{i-1}}\sigma_{A_i^{-,b}}\sigma_{A_{i+1}}\dots \sigma_{A_s}=c_{i,b}[\operatorname{pt}]\in H^{2r(n-r)}(\Gr(r,n)).$$
\end{remark}

\begin{example}\label{thesis}
 Consider $r=2, n=4$ with  $I_1=\{2,3\}$, $T=\{1,3\}$, $I_2=I_3=\{2,4\}$, and ${j_0}=1$. It is easy to see that we get $\lambda_1= \omega_1+\omega3=(2,1,1,0)$ and $\lambda_2=\lambda_3=\omega_2=(1,1,0,0)$. The corresponding extremal ray of $\Gamma_4(3)$ is generated by $$(\kappa(\lambda_1),\kappa(\lambda_2),\kappa(\lambda_3))= ((1,0,0,-1),(\frac{1}{2},\frac{1}{2},-\frac{1}{2},-\frac{1}{2}), (\frac{1}{2},\frac{1}{2},-\frac{1}{2},-\frac{1}{2}).$$
Another example is given in Example \ref{another} (also see Section \ref{tonne}).
\end{example}
\subsection{Other extremal rays}\label{namon2}
To get all extremal rays, we first note that any extremal ray of $\gammanq$ lies on at least one  regular facet $\mfq$, given by equality in one of the inequalities \eqref{evineq} (see Lemma \ref{faceto}).

Now, fix $(r,n,I_1,I_2,\dots,I_s)$ satisfying \eqref{dagger}, and let $\mfq$ be the regular facet of $\gammanq$ defined by equality in the inequality \eqref{evineq}. We pose some problems refining Problem \ref{mainp}.
\begin{problem}\label{mainp2}
\begin{enumerate}
\item Find all extremal rays of the facet $\mfq$ of $\gammanq$. These will also be extremal rays of $\gammanq$.
\item Describe the entire facet $\mfq$ in terms of the (smaller) eigencones $\Gamma_{r,\Bbb{Q}}$ and $\Gamma_{n-r,\Bbb{Q}}$.
\end{enumerate}
\end{problem}

Let $q$ be the total number of our building block divisors $D(A_1,\dots,A_s)$ arising from $(r,n,I_1,I_2,\dots,I_s)$: That is the number of choices of pairs $(j_0,a_0)$, with $a_0\in I_{j_0}$ such that $a_0>1$ and $a_0-1\not\in I_{j_0}$.
(given this pair, as before $A_i=I_i$ for $i\neq j_0$ and $A_{j_0}=(I_{j_0}-\{a_0\})\cup \{a_0-1\}$.)
 These $q$ divisors will be shown to give linearly independent elements in $\Pic(\Fl(n)^s)$ (Lemma \ref{shew}).
\begin{defi}\label{dmfq2}
Define a  $\mf_2$ of $\mf$  as follows: $\mf_2\subseteq \mf$ is the set of $s$-tuples $(x_1,\dots,x_n)$, with
$x_i=(x_i^{(1)},\dots,x_i^{(n)})$ such that:
\begin{itemize}
\item For all $i\in [s]$ and $b\in I_i$ such that $b>1$ and $b-1\not\in I_i$, we have $x^{(b)}_i=x^{(b-1)}_i$.
\end{itemize}
Define $\mf_{2,\Bbb{Q}}=\mf_{\Bbb{Q}}\cap \mf_2$.
\end{defi}

\begin{theorem}\label{two}
The facet  $\mfq$ is naturally  a product $\mf\cong (\Bbb{Q}_{\geq 0})^q\times \mfqtwo.$ Therefore extremal rays of $\mfq$ are either the $q$ rays described above, or the extremal rays of $\mfqtwo$.
\end{theorem}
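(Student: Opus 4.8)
The plan is to write down an explicit isomorphism of rational polyhedral cones $\Psi\colon(\Bbb{Q}_{\geq 0})^q\times\mfqtwo\xrightarrow{\ \sim\ }\mfq$ and then read off the extremal rays. Index the $q$ building blocks by the pairs $\ell=(j_0,a_0)$ of Section~\ref{namon1}; for such an $\ell$ write $D_\ell=D(A_1,\dots,A_s)$ for the divisor, $v_\ell=(\killing(\lambda_1),\dots,\killing(\lambda_s))\in\gammanq$ for the point \eqref{pita} it produces, and $\phi_\ell(x)=x_{j_0}^{(a_0-1)}-x_{j_0}^{(a_0)}$ for the associated linear form on $\frh_{n,\Bbb{Q}}^s$. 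By Definition~\ref{dmfq2} these $\phi_\ell$ are exactly the $q$ forms whose common vanishing cuts $\mf_2$ out of $\mf$, and each is $\geq 0$ on $\gammanq\subset(\frh_{+,n,\Bbb{Q}})^s$. Put $\Psi\bigl((t_\ell)_\ell,y\bigr)=\sum_\ell t_\ell v_\ell+y$.

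The first step is the duality $\phi_\ell(v_{\ell'})=\delta_{\ell\ell'}$. Since $\killing$ preserves differences of consecutive coordinates, $\phi_{(j_0,a_0)}(v_{(j_1,a_1)})$ equals $\lambda_{j_0}^{(a_0-1)}-\lambda_{j_0}^{(a_0)}$ for the weight tuple \eqref{pita} attached to $(j_1,a_1)$, which Proposition~\ref{egreggy} evaluates at $b=a_0-1$ from the corresponding sets $A_i$: if $j_1\ne j_0$ then $A_{j_0}=I_{j_0}$ and $a_0-1\notin I_{j_0}$, so part~(1) gives $0$; if $j_1=j_0$ and $a_1\ne a_0$ then $A_{j_0}=(I_{j_0}\setminus\{a_1\})\cup\{a_1-1\}$, which also omits $a_0-1$, so part~(1) gives $0$; and if $\ell'=\ell$ then $A_{j_0}=(I_{j_0}\setminus\{a_0\})\cup\{a_0-1\}$ contains $b=a_0-1$ but not $b+1=a_0$, so part~(3) returns the intersection number of the tuple with $A_{j_0}$ replaced by $(A_{j_0})^{-,a_0-1}=I_{j_0}$, namely $\sigma_{I_1}\cdots\sigma_{I_s}=[\op{pt}]$, i.e.\ $1$. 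In particular the $v_\ell$ are linearly independent, $\mf$ spans the hyperplane $\{\psi_\mf=0\}$ with $\psi_\mf(x):=\sum_i\sum_{a\in I_i}x_i^{(a)}$ the left side of \eqref{eveq}, and one checks $\{\psi_\mf=0\}=\operatorname{span}(v_1,\dots,v_q)\oplus\bigl(\{\psi_\mf=0\}\cap\textstyle\bigcap_\ell\ker\phi_\ell\bigr)$. Thus $\Psi$ is the restriction of a linear isomorphism onto $\operatorname{span}(\mf)$, with inverse $x\mapsto\bigl((\phi_\ell(x))_\ell,\ x-\sum_\ell\phi_\ell(x)v_\ell\bigr)$, so $\Psi$ is injective; and it lands in $\mfq$ since each $v_\ell$ lies on $\mfq$ (Theorem~\ref{one}(2)), $\mfqtwo\subset\mfq$, and $\mfq$ is a cone.

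What remains --- and this is the \emph{main obstacle} --- is surjectivity: for every $x\in\mfq$ the point $y:=x-\sum_\ell\phi_\ell(x)v_\ell$ lies in $\mfqtwo$. Here $\phi_\ell(x)\geq 0$ by the Weyl chamber condition, $\phi_{\ell'}(y)=0$ by the duality, and $\psi_\mf(y)=\psi_\mf(x)-\sum_\ell\phi_\ell(x)\psi_\mf(v_\ell)=0$ because $x,v_\ell\in\mf$; so the claim reduces to $y\in\gammanq$, i.e.\ that $y$ satisfies the Weyl chamber inequalities and all the Klyachko inequalities of Theorem~\ref{KlBe1}. This is not a formal manipulation, as subtracting a multiple of $v_\ell$ can violate a Klyachko inequality whose functional is strictly negative on $v_\ell$; one must use the divisor $D_\ell$. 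The mechanism is the identity
\[
\textstyle\sum_{i=1}^{s}\sum_{a\in A_i}x_i^{(a)}=\psi_\mf(x)+\bigl(x_{j_0}^{(a_0-1)}-x_{j_0}^{(a_0)}\bigr)=\phi_\ell(x)\qquad\text{for }x\in\mf,
\]
which says that the Klyachko-type form of the modified tuple $(A_1,\dots,A_s)$ restricts on $\mf$ to $\phi_\ell$. Interpreting its value as the Hilbert--Mumford weight $\mu^{L}(p,\delta_V)$ of the one-parameter subgroup $\delta_V$ of the distinguished point $V\in\bigcap_i\Omega_{A_i}(F_\bullet(i))$ at a general $p\in D_\ell$ --- $L$ being the line bundle attached to $x$ --- one deduces that any nonzero $\SLL(n)$-invariant section of $L^{N}$ vanishes along $D_\ell$ to order at least $N\phi_\ell(x)$ (up to the fixed normalization constant). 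Dividing out, for each $\ell$, by the $\SLL(n)$-invariant section of $\mathcal{O}(D_\ell)$ cutting out $D_\ell$ then yields a nonzero invariant section of the line bundle attached to $y$, whence $y\in\gammanq$ by Proposition~\ref{wellknown1}. The technically delicate point is passing from vanishing along the $\delta_V$-orbit to vanishing along the divisor $D_\ell$ itself; here one uses that $D_\ell$ is $\SLL(n)$-invariant, Proposition~\ref{DaDivisor}, and the independence of the $D_\ell$ (Lemma~\ref{shew}) in order to handle all $q$ divisors simultaneously. (Alternatively $y\in\gammanq$ can be read off from a description of $\mfq$ in terms of the smaller eigencones $\Gamma_{r,\Bbb{Q}}$ and $\Gamma_{n-r,\Bbb{Q}}$, cf.\ Problem~\ref{mainp2}(2).)

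Finally, granting the isomorphism $\mfq\cong(\Bbb{Q}_{\geq 0})^q\times\mfqtwo$, the extremal ray statement follows from the elementary fact that the extremal rays of a product $C_1\times C_2$ of pointed cones are exactly the $(\text{extremal ray of }C_1)\times\{0\}$ together with the $\{0\}\times(\text{extremal ray of }C_2)$. The extremal rays of $(\Bbb{Q}_{\geq 0})^q$ are its $q$ coordinate rays, carried by $\Psi$ to $\Bbb{Q}_{\geq 0}v_1,\dots,\Bbb{Q}_{\geq 0}v_q$, while $\{0\}\times\mfqtwo$ maps isomorphically onto $\mfqtwo$; and since $\phi_\ell(v_\ell)=1$ whereas every $\phi_\ell$ vanishes on $\mfqtwo$, no $v_\ell$ lies in $\mfqtwo$, so the two families are distinct.
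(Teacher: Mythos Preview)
Your argument follows the paper's own proof closely: the duality $\phi_\ell(v_{\ell'})=\delta_{\ell\ell'}$ is exactly Lemma~\ref{shew} (proved the same way via Proposition~\ref{egreggy}), injectivity of the sum map is identical, and for surjectivity the paper also shows that every invariant section of the line bundle attached to $x\in\mfq$ with $\phi_\ell(x)>0$ vanishes along $D_\ell$, using precisely your identity $\sum_i\sum_{a\in A_i}x_i^{(a)}=\phi_\ell(x)$ on $\mf$ and the failure of the corresponding semistability inequality at points of $D_\ell$ (Remark~\ref{remy}).

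The only real difference is in packaging the surjectivity step. You assert directly that an invariant section of $L^N$ vanishes along $D_\ell$ to order at least $N\phi_\ell(x)$ and then divide out all $q$ divisors simultaneously. The paper instead proves only order~$1$ vanishing (the proposition in Section~4.2.1: $H^0(\Fl(n)^s,N)^{\SLL(n)}=H^0(\Fl(n)^s,N(-D_\ell))^{\SLL(n)}$ whenever $\phi_\ell>0$) and then \emph{iterates}: after subtracting $v_\ell$ one remains on $\mfq$, $\phi_\ell$ drops by~$1$ while the other $\phi_{\ell'}$ are unchanged (by your duality), and one repeats until every $\phi_\ell$ is zero. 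This iterative phrasing is cleaner, because your direct claim of higher-order vanishing does not follow from a single Hilbert--Mumford computation: the one-parameter subgroup $\delta_V$ flows \emph{inside} the $\SLL(n)$-invariant divisor $D_\ell$, so its weight controls instability at $p$, not the order of vanishing transverse to $D_\ell$. The way to justify the order $N\phi_\ell(x)$ is exactly the paper's iteration, so your ``technically delicate'' remark is on point --- you should replace the single-step claim by the one-at-a-time argument, which then needs nothing beyond what you have already established.
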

This reduces Problem \eqref{mainp2} to the problem of finding the extremal rays of $\mfqtwo$.

\subsection{Formulas for induction}\label{fmla1}

Note that $\frh_{r,\Bbb{Q}}\times \frh_{n-r,\Bbb{Q}}\subseteq \frh_{n,\Bbb{Q}}$ (withot any conditions on dominance), since $\SLL(r)\times \SLL(n-r)$ is a Levi subgroup of $\SLL(n)$. Explicitly given $y=(y^{(1)},\dots,y^{(r)})\in \frh_{r,\Bbb{Q}}$ and $z=(y^{(1)},\dots,y^{(n-r)})\in \frh_{n-r,\Bbb{Q}}$, we map $(y,z)$ to $$x=(y,z)=(y^{(1)},\dots,y^{(r)},z^{(1)},\dots,z^{(n-r)})\in \frh_{n,\Bbb{Q}}$$
This leads to map $\frh_{r,\Bbb{Q}}^s\times \frh_{n-r,\Bbb{Q}}^s\subseteq \frh_{n,\Bbb{Q}}^s$ by sending
$$((y_1,\dots,y_s), (z_1,\dots,z_s)), \mapsto (x_1,\dots,x_s),\ x_i=(y_i,z_i), i=1,\dots,s $$
Therefore $\Gamma_{r,\Bbb{Q}}(s)\times \Gamma_{n-r,\Bbb{Q}}(s)\subseteq \frh_{r,\Bbb{Q}}^s\times \frh_{n-r,\Bbb{Q}}^s$ sits naturally in $\frh_{n,\Bbb{Q}}^s$.

\begin{theorem}\label{dirac1}
For $(x_1,\dots,x_s)$ in $\Gamma_{r,\Bbb{Q}}(s)\times \Gamma_{n-r,\Bbb{Q}}(s)\subseteq \frh_{n,\Bbb{Q}}^s$.
For each $i$, consider ``naive'' induction $y_i\in \frh_{n,\Bbb{Q}}$ of $x_i$: the element
$y_i=(y_{i}^{(1)},\dots,y_i^{(n)})$, with
$$y_i^{(b)} =x_i^{(w_{I_i}^{-1}(b))},b=1,\dots,n.$$

Here $w_{I_i}$ are the elements of the Weyl group of $\op{SL}(n)$ regarded as permutation of $\{1,\dots,n\}$ given in Definition \ref{closedsvv}.

Define $\operatorname{Ind}(x_1,\dots,x_s)\in \frh_{n,\Bbb{Q}}^s$ as the sum (see equation \eqref{pita}),
\begin{equation}\label{sumq1}
(y_1,\dots,y_s) +\  \sum_{i=1}^s\sum_{b>1,b\in I_i, b-1\not\in I_i}(y_i^{(b)}-y^{(b-1)})[D(I_1,\dots,I_{i-1}, I_i^{+,b}, I_{i+1},\dots,I_s)]
\end{equation}
where we have used the notation $I_i^{+,b}=(I_i-\{b\})\cup\{b-1\}$.

Then, $\operatorname{Ind}$ defines a surjective (linear) induction map of cones which has a section
\begin{equation}\label{process1}
\operatorname{Ind}: \Gamma_{r,\Bbb{Q}}(s)\times \Gamma_{n-r,\Bbb{Q}}(s)\twoheadrightarrow\mfqtwo\  \subseteq\  \mfq\subseteq \gammanq\subseteq \frh_{n,\Bbb{Q}}^s.
\end{equation}
\end{theorem}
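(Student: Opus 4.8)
The plan is to exhibit $\operatorname{Ind}$ as an explicit linear map, verify that its image lies in $\mfqtwo$, and then construct a section by reading off the $\SLL(r)\times\SLL(n-r)$--data from a point of $\mfqtwo$. First I would check that the ``naive induction'' $(y_1,\dots,y_s)$ is the correct starting point: for $(x_1,\dots,x_s)\in\Gamma_{r,\Bbb{Q}}(s)\times\Gamma_{n-r,\Bbb{Q}}(s)$ pick Hermitian matrices $B_i$ of size $r$ and $C_i$ of size $n-r$ realizing the eigenvalues, place them block-diagonally as $A_i'=\operatorname{diag}(B_i,C_i)$, so $\sum_i A_i'=0$; the issue is only that $A_i'$ need not be in the Weyl chamber of $\SLL(n)$. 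Conjugating each $A_i'$ by the permutation matrix $w_{I_i}$ rearranges the diagonal entries so that, position by position, $A_i'$ becomes $y_i$ as defined; since the $w_{I_i}$ are not simultaneously the identity, $\sum_i w_{I_i}A_i'w_{I_i}^{-1}$ is generally nonzero, but it is diagonal and traceless, so $(y_1,\dots,y_s)$ lands in $\frh_{n,\Bbb{Q}}^s$ but not necessarily in $\Gamma_n(s)$. The first key point is that the correction terms in \eqref{sumq1} are exactly what is needed: each divisor class $[D(I_1,\dots,I_i^{+,b},\dots,I_s)]$ lies on $\mfq$ by Theorem~\ref{one}(2) and, by Proposition~\ref{egreggy} (equivalently Remark~\ref{egreggyremark}), its $i'$-th component has $b'$-th gap equal to the relevant intersection number when $b'\in A_{i'}$, $b'+1\notin A_{i'}$, and is zero otherwise.

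Next I would compute the right-hand side of \eqref{sumq1} coordinate-wise. Fix $i$ and $b<n$. If $b\notin I_i$, then for every building-block divisor appearing in the sum the relevant $A$'s have $A_{i}=I_i$ in the $i$-th slot unless the divisor is the one attached to $(j_0,a_0)=(i,b+1)$; going through the cases of Proposition~\ref{egreggy} one finds that the $b$-th gap of each $[D(\cdots)]$ in the $i$-th coordinate vanishes, while the $b$-th gap of $y_i$ also vanishes because $w_{I_i}^{-1}(b)$ and $w_{I_i}^{-1}(b+1)$ are consecutive among the complementary indices $\{j_1,\dots,j_{n-r}\}$ and $x_i$ is dominant for $\SLL(n-r)$. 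If $b,b+1\in I_i$ the analogous bookkeeping applies using dominance of $x_i$ for $\SLL(r)$. The interesting case is $b\in I_i$, $b+1\notin I_i$: here the naive $y_i$ has a possibly negative $b$-th gap, equal to $x_i^{(\text{position in }r\text{-block})}-x_i^{(\text{position in }(n-r)\text{-block})}$; exactly one correction divisor, namely $D(I_1,\dots,I_i^{+,b+1},\dots,I_s)$ — or rather the one indexed so that $I_i^{+,\bullet}$ unwinds this descent — contributes $(y_i^{(b)}-y_i^{(b+1)})$ with the right sign to its own $b$-th gap, and the upshot is that after summation the $b$-th gap of $\operatorname{Ind}(x_1,\dots,x_s)_i$ is a nonnegative integer combination of intersection numbers, hence $\geq 0$, with equality precisely when $b-1\in I_i$ fails in the appropriate sense. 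This simultaneously proves dominance (so the image is in $\frh_{+,n,\Bbb{Q}}^s$), membership in $\mfq$ (each summand is on $\mfq$ and $\mfq$ is a cone), and the defining vanishing of $\mf_2$ from Definition~\ref{dmfq2} — that for $b\in I_i$, $b-1\notin I_i$ the gap $x^{(b-1)}_i-x^{(b)}_i$... more precisely the condition $x_i^{(b)}=x_i^{(b-1)}$ — which I would check falls out of the same coordinate computation applied at $b-1$.

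Then I would prove surjectivity onto $\mfqtwo$ together with the existence of a section. The natural candidate for the section is the map $\operatorname{Res}$ that takes $(x_1,\dots,x_s)\in\mfqtwo$ to $\big((y_1,\dots,y_s),(z_1,\dots,z_s)\big)$ with $y_i$ the entries of $x_i$ in positions $I_i$ (in increasing order) and $z_i$ the entries in positions $[n]-I_i$. One must show: (a) $\operatorname{Res}$ maps $\mfqtwo$ into $\Gamma_{r,\Bbb{Q}}(s)\times\Gamma_{n-r,\Bbb{Q}}(s)$, and (b) $\operatorname{Ind}\circ\operatorname{Res}=\operatorname{id}$ on $\mfqtwo$. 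For (b), given the coordinate formulas established above, reassembling $y_i$ and $z_i$ via naive induction returns the ``sorted'' vector whose $b$-th gaps differ from those of $x_i$ only at descents $b\in I_i$, $b+1\notin I_i$, and the correction sum restores exactly those gaps — here one uses that on $\mfqtwo$ the constraints of $\mf_2$ kill the ambiguous gaps, and that on $\mfq$ the inequality \eqref{evineq} is an equality, pinning down the line-bundle classes. For (a), I would invoke the tensor-product interpretation (Proposition~\ref{wellknown1}): a point of $\mfqtwo$ corresponds, up to scaling, to an $\SLL(n)$-invariant in $V_{N\lambda_1}\tensor\cdots\tensor V_{N\lambda_s}$ lying on the Ressayre-type facet attached to $(r,n,I_1,\dots,I_s)$, and restriction to the Levi $\SLL(r)\times\SLL(n-r)$ — using the branching behavior encoded by the Schubert data $w_{I_i}$, exactly as in the Belkale--Kumar framework \cite{BK} and Ressayre's work \cite{R2} — produces nonzero invariants for the smaller groups, giving membership in the product eigencone. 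The main obstacle I anticipate is (a): controlling the semistability/branching argument rigorously, i.e., showing the restriction of the invariant sections to the Levi really is nonzero and not merely that the numerical data is consistent; this is where one genuinely needs the geometry of the intersection loci $D(A_1,\dots,A_s)$ — that a generic point of $D$ has the intersection \eqref{picky} a single reduced point with tangent data governed by the transversality of the $\Omega_{I_i}$ — rather than purely formal manipulation of gaps and intersection numbers. The other steps, while calculation-heavy, are bookkeeping of the kind Proposition~\ref{egreggy} is designed to feed.
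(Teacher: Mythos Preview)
Your approach is fundamentally different from the paper's and has a genuine gap at the most important step.

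The paper does \emph{not} verify the formula \eqref{sumq1} directly. Instead it defines $\operatorname{Ind}$ geometrically, as a map of Picard groups: one introduces the stack $\mathcal{S}$ of pairs of flagged vector spaces (Definition \ref{defiSS}), the stack $\mathcal{B}=(\Fl(I_1)\times\cdots\times\Fl(I_s))/\SLL(n)$ built from partial flag varieties, and a universal intersection $\yosim$ sitting over both. Induction is then ``pull back a line bundle from $\mathcal{S}-R_{\mathcal{S}}$ along $p$, and extend across a codimension-$\geq 2$ complement to get a line bundle on $\mathcal{B}$''. The surjectivity onto $\mfqtwo$ and the existence of a section are proved at the level of Picard groups (Theorem \ref{shosty}, Proposition \ref{bloodtest}): the key input is an isomorphism $H^0(\mathcal{S}-R_{\mathcal{S}},\ml)\cong H^0(\mathcal{B},\operatorname{Ind}(\ml))$, obtained by propagating sections along a one-parameter degeneration $\phi_t$ and using the $\deg=0$ (Mumford index) condition. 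The explicit formula \eqref{sumq1} is derived \emph{afterwards}, by tracking the natural line bundles $\ml_a(i)$ with fiber $F'_a(i)/F'_{a-1}(i)$ through this construction and identifying where the tautological map $p^*\ml_a(i)\to\mathcal{F}_\ell(i)/\mathcal{F}_{\ell-1}(i)$ acquires a simple zero along a basic divisor $\widetilde{D}$.

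Your direct approach breaks down at ``membership in $\mfq$ (each summand is on $\mfq$ and $\mfq$ is a cone)''. The naive term $(y_1,\dots,y_s)$ is \emph{not} in $\mfq$: you yourself note it need not be dominant, and it is certainly not in $\gammanq$. Moreover the coefficients $y_i^{(b)}-y_i^{(b-1)}$ multiplying the basic divisor classes can have either sign (one entry comes from the $r$-block, the other from the $(n-r)$-block), so even the correction sum is not a nonnegative combination of points of $\mfq$. Verifying dominance and the single equality \eqref{eveq} is far from enough: $\mfq$ sits inside $\gammanq$, and membership in $\gammanq$ requires \emph{all} the Klyachko inequalities of Theorem \ref{KlBe1}, equivalently the existence of a nonzero invariant section. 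You never establish this. In the paper it comes for free from the geometric definition, via Theorem \ref{shosty}(2) and Proposition \ref{wellknown1}. Your acknowledged obstacle (a) for the section is really the same issue in the other direction, and you would need essentially the paper's geometric machinery (the diagram \eqref{basicdiagram1} and the section isomorphism) to close it.
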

This construction comes from the geometry of partial flag varieties, and the formula for induction above is deduced from geometry. Therefore the extremal rays of $\mfqtwo$ are images (by induction) of extremal rays of $\Gamma_{r,\Bbb{Q}}(s)\times \Gamma_{n-r,\Bbb{Q}}(s)$ under the map \eqref{process1} described by the formula \ref{sumq1}. In practical terms, one would have to take the images of the (finitely many) extremal rays of  $\Gamma_{r,\Bbb{Q}}(s)\times \Gamma_{n-r,\Bbb{Q}}(s)$ under the explicit map \eqref{process1}, and extract a subset which generates the image in $\mf_2$. The fibers of \eqref{process1} can be understood in terms of the ramification divisor in the Schubert calculus problem defining the face $\mf$.
\subsection{Possible generalizations}
It is tempting to compare this picture to  Harish-Chandra's ``Philosophy of cusp forms" (see e.g., \cite[Chapter 49]{bumpy})(also see the connection of the Hermitian eigenvalue problem to problems over $p$-adics in  \cite{FBulletin}, and \cite{BZ}). The induction operation is an analogue of parabolic induction, and the boundary of the eigencone $\Gamma_n(s)$ has been described
in terms of Levi subgroups. The cuspidal part of the eigencone is the set of points on it which are not on regular faces. In a function theoretic sense, the corresponding sections of line bundles vanish on suitably defined cusps of $\Fl(n)^s$, see Lemma \ref{cusplemma}.

We have given an inductive description of extremal rays of each of the regular facets $\mf$. Any extremal ray of $\Gamma_n(s)$ is on one of these regular facets. Therefore we have accounted for all extremal rays of $\Gamma_n$.
\begin{enumerate}
\item An extremal ray of $\Gamma_n$ may be on several facets $\mf$, and we have over counted the extremal rays above. To address this we will have to work with (standard) non-maximal parabolics and consider regular faces of arbitrary codimension.
Perhaps the  parabolic associated to an extremal ray is not unique, but the corresponding Levi subgroup is unique.
\item The map $\operatorname{Ind}$ of \eqref{process1} may not take extremal rays to extremal rays. But every extremal ray of $\mf_2$ is obtained by induction from an extremal ray. This is a familiar problem wherein parabolic induction may not take irreducibles to irreducibles. Perhaps passing to non-maximal parabolics would solve this problem, and restricting the induction operation to cuspidal objects (which roughly means to induct rays not on regular faces) may solve this problem. Or perhaps there is a richer structure of suitable Hecke algebras controlling this picture.
\item See how these processes of induction and our basic extremal ray classes \eqref{pita} work out for small values of $n$ (for very small values see Section \ref{tonne}).
\item Bound the sizes of the coefficients of generators of extremal rays (i.e., e.g., the numbers $c_{i,b}$ in Proposition \ref{egreggy}).
\item Study the cuspidal part (perhaps with some integrality conditions) of the eigencone.
\end{enumerate}
It is most natural to address these points in the setting of general groups and arbitrary (standard) parabolics, and we hope to return to these themes (including the multiplicative eigenvalue problem) in future work \footnote{See e.g., \cite{BICM,kumar,RICM} for surveys, and \cite[Section 7]{BLocal}, and \cite{thaddy} for some work on the problem of vertices in the multiplicative problems.}. It is also an interesting problem to interpret the explicit formulas for our basic extremal rays  (Proposition \ref{egreggy}), and the formulas for induction \eqref{sumq1} in terms of the Honeycombs of Knutson and Tao \cite{KT}.
\subsection{Acknowledgements}
 I thank A. ~Gibney, A. ~Kazanova and M. ~ Schuster for useful discussions on the related problem of vertices of the multiplicative eigenpolyhedron, S.~ Kumar for useful discussions and for  comments on an initial version of this work,  and N.~ Ressayre for showing me (in 2014) his interesting  example of an extremal ray for $\SLL(9)$. %I thank an anonymous referee for useful comments.

\section{Basic divisors and the corresponding extremal rays}
Through out this section, we will use notation from Section \ref{namon1}. In particular the data $(r,n,I_1,\dots,I_s)$ is fixed as is $(j_0,0)$. Without loss of generality we assume that $j_0=1$. Let $D=D(A^1,\dots,A^s)$ with $A_1=(I_1-\{a_0\})\cup\{a_0\}$, and $A_i=I_i$ for $2\leq i\leq s$. In view of Lemma \ref{Fproperty}, statement (1) follows from (3) in Theorem \ref{one}

We describe conditions under which divisor loci in $\Fl(n)^s$ give extremal rays of $\gammanq$:
\begin{lemma}\label{Fproperty}
 Let $E$ be a reduced, irreducible $\SLL(n)$ invariant effective divisor on $\Fl(n)^s$ such that
 $H^0(\Fl(n)^s,\mathcal{O}(NE))^{\SLL(n)}$ is one dimensional  for all positive integers $N> 0$. Then
 $\mathcal{O}(E)$ gives an extremal ray of $\gammanq$. In addition, $\mathcal{O}(E)$ cannot be written as a tensor product of line bundles which have non-zero invariant sections.
  \end{lemma}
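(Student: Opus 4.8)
My approach to Lemma~\ref{Fproperty} is to translate the statement about extremal rays of $\gammanq$ into a statement about the one-dimensionality of spaces of invariant sections, using Proposition~\ref{wellknown1} to pass freely between the eigencone picture and the line-bundle picture. Write $L = \mathcal{O}(E) = L_{\lambda_1}\boxtimes\dots\boxtimes L_{\lambda_s}$ and let $\xi = (\killing(\lambda_1),\dots,\killing(\lambda_s)) \in \gammanq$ be the corresponding point, which lies in $\gammanq$ precisely because $E$ is an effective $\SLL(n)$-invariant divisor, hence $L$ has a nonzero invariant section, namely $s_E$. The claim is that $\Bbb{Q}_{\geq 0}\,\xi$ is an extremal ray.

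First I would set up the contrapositive characterization of extremality: $\Bbb{Q}_{\geq 0}\,\xi$ fails to be extremal in the rational polyhedral cone $\gammanq$ if and only if $\xi$ can be written as $\xi_1 + \xi_2$ with $\xi_1,\xi_2 \in \gammanq$ not both proportional to $\xi$. Clearing denominators and using that $\gammanq$ is the rational cone corresponding to $\operatorname{Tens}_{n,\Bbb{Q}}(s)$ (Remark~\ref{semi}), such a decomposition would after scaling by a suitable positive integer $N$ give $N\lambda_i = \mu_i + \nu_i$ with $(\mu_i)$, $(\nu_i)$ each a tuple of dominant integral weights lying in $\operatorname{Tens}_{n,\Bbb{Q}}(s)$. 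By Proposition~\ref{wellknown1} this means the line bundles $M = L_{\mu_1}\boxtimes\dots\boxtimes L_{\mu_s}$ and $M' = L_{\nu_1}\boxtimes\dots\boxtimes L_{\nu_s}$ each carry a nonzero $\SLL(n)$-invariant section, say $\sigma$ and $\sigma'$, with $M \otimes M' = L^N$. Then $\sigma \cdot \sigma'$ is a nonzero invariant section of $L^N$. The key step is now the observation that, since $E$ is \emph{reduced and irreducible}, the divisor of any nonzero section of $L^N = \mathcal{O}(NE)$ that is a product of two effective divisor classes must split that $NE$ among the two factors; more precisely, $\operatorname{div}(\sigma) + \operatorname{div}(\sigma') = NE$ forces $\operatorname{div}(\sigma) = aE$ and $\operatorname{div}(\sigma') = bE$ with $a,b \geq 0$, $a+b = N$ — this is exactly where irreducibility of the support is used, together with the fact that $E$ generates the relevant part of the effective cone along this ray.

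To make that rigorous I would argue as follows. The hypothesis that $H^0(\Fl(n)^s, \mathcal{O}(NE))^{\SLL(n)}$ is one-dimensional for all $N>0$ means that $s_E^N$ spans this space, so any invariant section $\sigma$ of $M$ with $M \otimes M' = L^N$ — if $M$ and $M'$ are themselves of the form $\mathcal{O}(\text{effective invariant divisor})$ — has the property that $\sigma\cdot\sigma'$ is a scalar multiple of $s_E^N$, whence as divisors $\operatorname{div}(\sigma) + \operatorname{div}(\sigma') = NE$. Since $E$ is irreducible, $\operatorname{div}(\sigma)$ and $\operatorname{div}(\sigma')$ are each supported on $E$ and therefore equal $aE$, $bE$ respectively. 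This identifies $M \cong \mathcal{O}(aE)$, $M'\cong \mathcal{O}(bE)$ in $\Pic$, hence $\xi_1 = \tfrac{a}{N}\xi$ and $\xi_2 = \tfrac{b}{N}\xi$ are both proportional to $\xi$ — contradicting non-triviality of the decomposition. This proves $\Bbb{Q}_{\geq 0}\,\xi$ is an extremal ray, and the very same splitting argument applied with $N=1$ (allowing $M$, $M'$ to have nonzero invariant sections but $M\otimes M' = L$) shows $L$ cannot be a nontrivial tensor product of line bundles carrying nonzero invariant sections, which is the final assertion.

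**Main obstacle.** The delicate point is justifying that an $\SLL(n)$-invariant section $\sigma$ of a summand line bundle $M$ has an \emph{effective} divisor supported on the irreducible hypersurface $E$ — i.e., controlling the effective cone of $\Fl(n)^s$ near the class $[E]$, and knowing that $M$, $M'$ are honestly effective (not merely $\Bbb{Q}$-effective) divisor classes. One must be careful that $\operatorname{div}(\sigma)$ is an honest effective divisor whose class is $c_1(M)$, so that $\operatorname{div}(\sigma)+\operatorname{div}(\sigma') = NE$ as divisors (not just in cohomology) — this uses that $\Fl(n)^s$ is smooth projective so $\Pic$ injects into divisors modulo linear equivalence and effective divisors are cut out by sections. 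Once this bookkeeping is in place, irreducibility of $E$ does the rest essentially formally.
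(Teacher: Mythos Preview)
Your proof is correct and follows essentially the same approach as the paper's: decompose $\mathcal{O}(NE)\cong M\otimes M'$ with nonzero invariant sections $\sigma,\sigma'$, use the one-dimensionality hypothesis to force $\operatorname{div}(\sigma)+\operatorname{div}(\sigma')=NE$ as effective Weil divisors, and invoke irreducibility and reducedness of $E$ to conclude each summand is a multiple of $E$. The paper's version is more terse (it does not spell out the contrapositive characterization of extremality or the passage through $\operatorname{Tens}_{n,\Bbb{Q}}(s)$), but the substance is identical, and your ``main obstacle'' is indeed just bookkeeping on a smooth projective variety.
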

 \begin{proof}
 Suppose $\mathcal{O}(NE)$ is isomorphic to $L\tensor L'$ where $L=L_{\lambda}\boxtimes L_{\mu}\boxtimes L_{\nu}$ and
 $L'=L_{\lambda'}\boxtimes L_{\mu'}\boxtimes L_{\nu'}$ are such that $H^0(\Fl(n)^s,L)^{\SLL(n)}\neq 0$ and $H^0(\Fl(n)^s,L')^{\SLL(n)}\neq 0$.
 Pick non-zero sections $s\in H^0(\Fl(n)^s,L)^{\SLL(n)}$ and $s'\in H^0(\Fl(n)^s,L')^{\SLL(n)}$ let $F$ and $F'$ be their divisors of zeroes. Therefore
 there is a section  of $H^0(\Fl(n)^s,\mathcal{O}(NE))^{\SLL(n)}$ whose divisor of zeroes is $F+F'$, but the former has only one non-zero invariant section
 up to scale. Therefore $NE=F+F'$ as effective Weil divisors. This implies that $F$ and $F'$ both have support in $E$. We can conclude the proof since $E$ is multiplicity
 free by assumption.
  \end{proof}

\subsection{Divisor loci in $\Fl(n)^s$}\label{beginner}
\begin{defi} Let $I\subseteq [n]$, $|I|=n$ and $F_{\bull}\in\Fl(n)$.
The open Schubert cell (define $i_0=0, i_{r+1}=n$)
$$\Omega^0_I(F_{\bullet})=\{V\in \operatorname{Gr}(r,\Bbb{C}^n)\mid \dim V\cap F_{a}=j, i_{j}\leq a < i_{j+1}, j=0,\dots,r\},$$
lies in the smooth locus of the normal variety $\Omega_I(F_{\bullet})$.
\end{defi}
\begin{proposition}\label{DaDivisor}
$D=D(A^1,\dots,A^s)$ is non-empty, irreducible, and codimension one in $\Fl(n)^s$, and is invariant under the action of $\SLL(n)$.
\end{proposition}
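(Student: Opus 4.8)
The plan is to establish the four assertions—nonemptiness, irreducibility, codimension one, and $\SLL(n)$-invariance—essentially in that order, with the codimension count being the technical heart.

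\medskip

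\emph{Invariance.} This is immediate and I would dispatch it first: the condition \eqref{picky} is that $\bigcap_i \Omega_{A_i}(F_\bull(i))\neq\emptyset$, and for $g\in\SLL(n)$ acting diagonally we have $g\cdot\Omega_{A_i}(F_\bull(i))=\Omega_{A_i}(g\cdot F_\bull(i))$, so the intersection for the translated flags is $g$ times the original intersection; nonempty goes to nonempty. Hence $D$ is $\SLL(n)$-stable (in fact stable under the diagonal $\GL(n)$).

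\medskip

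\emph{Nonemptiness and the expected dimension.} The idea is to realize $D$ as (the closure of) the image of an incidence variety. Consider
\[
\widetilde D=\{(V,F_\bull(1),\dots,F_\bull(s))\in \Gr(r,n)\times\Fl(n)^s \mid V\in \Omega_{A_i}(F_\bull(i))\ \forall i\}.
\]
Projection to $\Gr(r,n)$ is surjective with fiber over $V$ equal to $\prod_i\{F_\bull(i): V\in\Omega_{A_i}(F_\bull(i))\}$. For fixed $V$, the locus of flags $F_\bull$ with $V\in\Omega_A(F_\bull)$ is irreducible (it fibers over partial flag varieties / is an orbit closure under the stabilizer of $V$ in $\GL(n)$, which acts transitively on $\Gr(r,n)$), of codimension $|\sigma_A|$ in $\Fl(n)$. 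Therefore $\widetilde D$ is irreducible of dimension $\dim\Gr(r,n)+\sum_i(\dim\Fl(n)-|\sigma_A_i|)$. Now project to $\Fl(n)^s$; the image is $D$, so $D$ is irreducible (image of an irreducible), which handles that assertion, and nonempty. For the dimension, the key computation is that for a \emph{general} point of $D$ the intersection \eqref{picky} is a single reduced point: this is exactly where I would invoke that $\sigma_{I_1}\cdots\sigma_{I_s}=[\mathrm{pt}]$ together with $A_{j_0}=I_{j_0}^{+,a_0}$ raising codimension by one, so $\sigma_{A_1}\cdots\sigma_{A_s}$ lives in $H^{2(r(n-r)+1)}$—which is zero—meaning that for generic flags the intersection is \emph{empty}, and on $D$ it is generically zero-dimensional. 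Granting that the generic fiber of $\widetilde D\to D$ is finite (indeed a single point), $\dim D=\dim\widetilde D$, and the transversality/enumerative input gives
\[
\operatorname{codim}_{\Fl(n)^s} D = 1 + \Big(\sum_i |\sigma_{I_i}| - \sum_i|\sigma_{A_i}|\Big) - (\text{something}) ;
\]
more cleanly: $\sum_i |\sigma_{A_i}| = r(n-r)+1$, so $\dim\widetilde D = \dim\Gr(r,n) + s\dim\Fl(n) - (r(n-r)+1) = s\dim\Fl(n)-1$, giving $\operatorname{codim} D = 1$ once we know the fiber is generically a point. So $D$ is a divisor.

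\medskip

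\emph{The generic fiber is a single point — the main obstacle.} The crux is proving that over a general $(F_\bull(i))\in D$ the intersection $\bigcap_i\Omega_{A_i}(F_\bull(i))$ is exactly one reduced point. Upper semicontinuity of fiber dimension on $\widetilde D$ shows the generic fiber has the minimal dimension; one must rule out positive-dimensional generic fibers and multiplicity. I would argue as follows: since $\sigma_{I_1}\cdots\sigma_{I_s}=[\mathrm{pt}]$, for generic flags $(G_\bull(i))$ there is a unique transverse point $V_0\in\bigcap_i\Omega_{I_i}(G_\bull(i))$, lying in the open cells $\Omega^0_{I_i}$. Now degenerate the $j_0$-th flag within a one-parameter family so that $G_\bull(j_0)\rightsquigarrow F_\bull(j_0)$ with $V_0$ forced into $\Omega_{A_{j_0}}=\Omega_{I_{j_0}^{+,a_0}}$ — this is a codimension-one condition, cut out by the vanishing of a single Plücker-type minor (cf. Lemma \ref{plum}), and the transversality at $V_0$ persists for the deformed flag because the other $\Omega_{I_i}(G_\bull(i))$ are unchanged and meet $\Omega_{A_{j_0}}(F_\bull(j_0))$ transversally at $V_0$ for generic such data (dimension count: $r(n-r)+1$ conditions on $\Gr(r,n)$, expected empty, but the locus in flag-space where they're nonempty is precisely $D$, and on it the solution is isolated). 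The potential difficulty is making ``generic on $D$'' rigorous: I would do this by exhibiting one explicit point of $D$ (built from the transverse solution to the $\sigma_{I_i}$ problem as above) where the fiber is a single reduced point, and then invoke irreducibility of $D$ plus semicontinuity to conclude the same holds on a dense open subset. Smoothness of $\widetilde D$ at that point, via the open-cell description (using that $\Omega^0_I$ lies in the smooth locus of $\Omega_I$), makes the reducedness bookkeeping clean. Once the generic fiber is a point, $\dim D=\dim\widetilde D=s\dim\Fl(n)-1$, completing the proof.
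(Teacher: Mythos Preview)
Your overall strategy---introduce the incidence variety $\widetilde D\subset\Gr(r,n)\times\Fl(n)^s$, compute its dimension, and then show the projection to $\Fl(n)^s$ is generically injective by exhibiting a single point with a reduced one-point fiber---is exactly the paper's approach. The irreducibility argument via fibering $\widetilde D$ over $\Gr(r,n)$ is a fine substitute for the paper's citation of \cite{BKR}.

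There is, however, a genuine soft spot at your key step. You propose to ``degenerate the $j_0$-th flag'' so that $V_0$ lands in $\Omega_{A_{j_0}}$, and then assert that the intersection remains a single point ``for generic such data.'' But nothing in your argument controls possible \emph{new} intersection points away from $V_0$: once you move $G_\bull(j_0)$ to $F_\bull(j_0)$, the variety $\Omega_{I_{j_0}}(F_\bull(j_0))$ may well differ from $\Omega_{I_{j_0}}(G_\bull(j_0))$, and the intersection $\bigcap_i\Omega_{I_i}$ could acquire extra points. Transversality at $V_0$ alone does not rule this out, and appealing to ``genericity'' among degenerations landing in $D$ is circular, since that is precisely what you are trying to establish.

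The paper closes this gap with one clean observation: rather than degenerating, it \emph{modifies only the $(a_0-1)$-st step} of the flag $F_\bull(j_0)$, replacing $F_{a_0-1}(j_0)$ by a hyperplane $S\subset F_{a_0}(j_0)$ containing $F_{a_0-2}(j_0)$ and chosen so that $V_0\cap S=V_0\cap F_{a_0}(j_0)$. Because $a_0-1\notin I_{j_0}$, the closed Schubert variety $\Omega_{I_{j_0}}$ does not depend on this step of the flag, so $\Omega_{I_{j_0}}(F'_\bull)=\Omega_{I_{j_0}}(F_\bull(j_0))$ and hence $\bigcap_i\Omega_{I_i}$ is literally unchanged and still equals $\{V_0\}$. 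Since $\Omega_{A_{j_0}}(F'_\bull)\subset\Omega_{I_{j_0}}(F'_\bull)$, the $A$-intersection is contained in $\{V_0\}$, and by construction $V_0$ lies in it; so the fiber is exactly $\{V_0\}$, reduced. This is the missing ingredient in your sketch.
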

Before beginning the proof we introduce some notation which will be used at other places:
\begin{notation}\label{listo}
Define the universal intersection of closed Schubert varieties (see e.g., \cite[Section 5]{BKR} for the scheme structures)
$Y\subseteq \operatorname{Gr}(r,n)\times \Fl(n)^s$ by
$$Y=\{(V,F_{\bull}(1),F_{\bull}(2),\dots,F_{\bull}(s)): V\in \bigcap_{i=1}^s \Omega_{I_i}(F_{\bull}(i))\}$$
and $\widetilde{D}\subset Y$, the universal intersection
$$\widetilde{D}=\{(V,F_{\bull}(1),F_{\bull}(2),\dots,F_{\bull}(s)): V\in \bigcap_{i=1}^s \Omega_{A_i}(F_{\bull}(i))\}$$
 Let $\pi:Y\to \Fl(n)^s$ be the projection. Note that $D=\pi(\widetilde{D})$. Define a subscheme "the universal intersection of Schubert cells" $Y^0\subseteq Y\subseteq \operatorname{Gr}(r,n)\times \Fl(n)^s$
\begin{equation}\label{Schubert}
 Y^0=\{(V,F_{\bull}(1),\dots,F_{\bull}(s)):   V\in \bigcap_{i=1}^s \Omega^0_{I_i}(F_{\bull}(i))\}
 \end{equation}
 Clearly $\SLL(n)$ acts on $Y^0$. Let $R^0\subset Y^0$ be the ramification divisor of $\pi:Y^0\to \Fl(n)^s$ (note that $Y^0$ is smooth).
\end{notation}
\begin{proof} (Of Proposition \ref{DaDivisor})
It is easy to see (by counting dimensions) that $\widetilde{D}$ is a divisor in $Y$. Both $Y$ and $\widetilde{D}$ are generically reduced, and irreducible, and $Y$ is smooth at the generic point of $\widetilde{D}$ (all these are easy to see and follow from \cite[Lemma 5.2]{BKR}), furthermore, the
 map $\pi:Y\to \Fl(n)^s$ is birational. Let $Y^{\op{sm}}$ be the smooth locus of $Y$, and $R\subset Y^{\op{sm}}$ the ramification locus of $Y^{sm}\to \Fl(n)^s$.

  We now show that  $\widetilde{D}\cap Y^{\op{sm}}$ is not in $R$: Let $(V,F_{\bull}(1),F_{\bull}(2),\dots,F_{\bull}(s))\in Y^0$ be such that
 $V\in \bigcap_{i=1}^s \Omega^0_{I_i}(F_{\bull}(i))$ is a transverse point of intersection. We also assume that
 $\bigcap_{i=1}^s \Omega^0_{I_i}(F_{\bull}(i))=\bigcap_{i=1}^s \Omega_{I_i}(F_{\bull}(i))$.

 Hence  $\dim V\cap F_{a_0-1}(1)=\dim V\cap F_{a_0}(1) -1$ and $V\cap F_{a_0-2}(1)=V\cap F_{a_0-1}(1)$. Let $S$ be a $a_0-1$ dimensional subspace of $\Bbb{C}^n$ such that
 $F_{a_0-2}(1)\subset S\subset F_{a_0}(1)$ such that $V\cap S=V\cap F_{a_0}(1)$. Define a new complete flag
 $F'_{\bull}$ such that $F'_{b}=F_{b}(1)$ for all $b\neq a_0$ and $F'_{a_0}=S$.

 It is now easy to see that
 $$V\in \Omega_{T}(F'_{\bull})\cap \bigcap_{i=1}^s\Omega^0_{I_i}(F_{\bull}(i))$$
  and is the only point in the  intersection
  $$\Omega_{I_1}(F'_{\bull})\cap \bigcap_{i=2}^s\Omega^0_{I_i}(F_{\bull}(i))=\bigcap_{i=1}^s \Omega^0_{I_i}(F_{\bull}(i)).$$ Therefore $(V,F'_{\bull},F_{\bull}(2),\dots,F_{\bull}(s))$  is in $\widetilde{D}\cap Y^{\op{sm}}$ and not in the ramification locus $R$.

 Since $\widetilde{D}$ is generically reduced and irreducible, and $\widetilde{D}\cap Y^{\op{sm}}$ is not contained in $R$, we can conclude the $D=\pi(\widetilde{D})$ (set theoretically) is codimension one, and irreducible, in $\Fl(n)^s$. In fact $D=\pi_*[\widetilde{D}]\in A^1(\Fl(n)^s)$ (the cycle theoretic pushforward). The invariance under $\SLL(n)$ is clear from the definitions.
 \end{proof}

 \subsection{A basic theorem about invariant functions on $Y^0-R^0$}
  \begin{theorem}\label{may11}
 $H^0(Y^0-R^0,\mathcal{O})^{\SLL(n)}=\Bbb{C}$.
 \end{theorem}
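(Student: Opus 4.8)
The plan is to reduce the statement to a rigidity property for the generic intersection point and then exploit the birationality of $\pi\colon Y^0 \to \Fl(n)^s$. First I would observe that $Y^0 - R^0$ maps to $\Fl(n)^s$ with one-dimensional generic fiber structure: away from the ramification divisor $R^0$, the map $\pi$ is \'etale onto its image, and since $\pi$ is birational (as noted in the proof of Proposition \ref{DaDivisor}) it is in fact an open immersion on a dense open subset of $Y^0$. So an $\SLL(n)$-invariant regular function $f$ on $Y^0 - R^0$ can be viewed as an $\SLL(n)$-invariant rational function on $\Fl(n)^s$ that is regular on the image $U := \pi(Y^0 - R^0)$, which is open and $\SLL(n)$-stable. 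The key point is to control how big this open set is — specifically, I want the complement of $U$ in $\Fl(n)^s$ to have codimension $\ge 1$ only along loci where invariant functions cannot develop poles, or better, to show directly that $f$ descends to a regular invariant function on enough of the GIT quotient.

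The cleaner route: since $\Fl(n)^s$ has no nonconstant $\SLL(n)$-invariant regular functions on any $\SLL(n)$-stable open set whose complement has codimension $\ge 2$ in the quotient (because the semistable locus is a divisorially-dense open whose quotient is projective, hence has only constant global functions), it suffices to show that $f$, a priori regular only on $U$, extends to a regular invariant function on a large enough open set. I would argue as follows. The invariant function $f$ on $Y^0 - R^0$ is $\SLL(n)$-invariant, and $Y^0$ is a smooth variety on which $\SLL(n)$ acts with the property that the generic stabilizer is finite (the generic point of $Y^0$ corresponds to a transverse intersection point $V$ together with generic flags, and the stabilizer of such a configuration in $\SLL(n)$ is the subgroup fixing a generic complete flag of $\mathbb{C}^n$ together with the point $V$, which is finite — in fact trivial up to center). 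Then $f$ is constant on generic orbits' closures mod the relevant action, and I would use that $Y^0$ modulo $\SLL(n)$ is, birationally via $\pi$, the moduli stack $\Fl(n)^s/\SLL(n)$, whose coarse space is unirational but — crucially — the relevant compactification $\Fl(n)^s /\!\!/ \SLL(n)$ is projective, so bounded functions are constant. The remaining work is to check that $f$ is bounded, i.e., has no poles along the boundary divisors we removed ($R^0$ and the complement of $Y^0$ in a projective model).

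Concretely, the steps in order: (1) record that $\pi\colon Y^0 \to \Fl(n)^s$ is birational with $Y^0$ smooth, and identify $U = \pi(Y^0 - R^0)$ as an $\SLL(n)$-stable open subset of $\Fl(n)^s$ whose complement is a union of the divisor $\pi(R^0)$ (possibly together with higher-codimension loci); (2) transport $f$ to an $\SLL(n)$-invariant rational function $\tilde f$ on $\Fl(n)^s$, regular on $U$; (3) show that $\tilde f$ extends across $\pi(R^0)$: here use that $\pi(R^0)$, being the image of the ramification divisor, is contained in the locus where the Schubert intersection $\bigcap \Omega_{I_i}(F_\bullet(i))$ is non-transverse or non-reduced, but — and this is the heart of the matter — $\tilde f$ as a function pulled back from $Y^0$ is single-valued near such points precisely because $\pi$ is an isomorphism over a neighborhood of the generic point of $\pi(R^0)$ in $\Fl(n)^s$ (the generic non-transverse intersection is still a single point), so there is no monodromy forcing a pole; (4) conclude that $\tilde f$ is a regular invariant function on an $\SLL(n)$-stable open subset of $\Fl(n)^s$ with complement of codimension $\ge 2$, hence constant, since global invariant regular functions on $\Fl(n)^s$ are constant ($\Fl(n)^s$ is projective and connected) and $\SLL(n)$-invariants cannot have poles on a codimension-$\ge 2$ locus.

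The main obstacle I anticipate is step (3): ruling out poles of $\tilde f$ along $\pi(R^0)$. One must show the ramification divisor $R^0 \subset Y^0$, although it disappears when we restrict, does not force the would-be extension $\tilde f$ to be merely meromorphic along its image. The clean way to handle this is to work on the normalization and use that $\pi$ restricted to $Y^0 - (\text{codim} \ge 2)$ is an isomorphism onto its image — i.e., to prove that $R^0$ itself maps \emph{isomorphically} (not just finitely) onto its image divisor in $\Fl(n)^s$, equivalently that generically along $R^0$ the intersection is still a single (fat) point. Granting that, $\tilde f$ extends regularly across $\pi(R^0)$ by Hartogs/normality, and the argument closes. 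If $R^0$ does not map birationally onto its image, one would instead need a Zariski–main-theorem argument showing that the multivaluedness of $\pi^{-1}$ along $\pi(R^0)$ is resolved by the invariance of $f$ under the deck transformations, which are induced by elements of $\SLL(n)$; this is plausible but requires identifying the local monodromy group of $\pi$ with a subgroup of $\SLL(n)$ acting through the quotient.
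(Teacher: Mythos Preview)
Your approach has a genuine gap. In step (1) you assert that the complement of $U = \pi(Y^0 - R^0)$ in $\Fl(n)^s$ is ``a union of the divisor $\pi(R^0)$ (possibly together with higher-codimension loci)''. This is false: the complement also contains the $q$ basic divisors $D = D(A_1,\dots,A_s)$ of Definition~\ref{basico}, one for each admissible pair $(j_0,a_0)$. Indeed, Section~\ref{lastpara} shows $\pi(Y^0 - R^0) \subseteq \Fl(n)^s - D$. These $D$ are honest codimension-one loci in $\Fl(n)^s$ (Proposition~\ref{DaDivisor}), and they are \emph{not} the image of $R^0$: already in the simplest case ($r=1$, $n=2$, $s=3$, $I_1=\{1\}$, $I_2=I_3=\{2\}$) one checks that $\pi\colon Y^0 \to \Fl(2)^3$ is an open immersion, so $R^0 = \emptyset$, while the complement of $U$ is the union of the two divisors $\{F_1(1)=F_1(2)\}$ and $\{F_1(1)=F_1(3)\}$. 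Your step (3) only discusses extension across $\pi(R^0)$ and says nothing about the $D$'s, so step (4) fails. Worse, the statement that invariant regular functions on $\Fl(n)^s - D$ are constant is exactly Theorem~\ref{one}(3), which in the paper is \emph{deduced from} Theorem~\ref{may11}; feeding it back in here would be circular. (Step (3) is also shaky on its own terms: at points of $R^0$ the map $\pi$ is by definition not \'etale, so ``$\pi$ is an isomorphism over a neighbourhood of the generic point of $\pi(R^0)$'' cannot hold as stated.)

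The paper's argument is entirely different and rests on an external input, Fulton's conjecture. One introduces the stack $\mathcal{S}$ (Definition~\ref{defiSS}) parametrizing pairs of flagged vector spaces $(V,Q)$ with a trivialization of $\det V \otimes \det Q$, together with a projection $p\colon Y^0/\SLL(n) \to \mathcal{S}$ and a ``direct-sum'' section $i\colon \mathcal{S} \to Y^0/\SLL(n)$. For any $x \in Y^0$, the one-parameter subgroup $\phi_t$ acting by $t^{n-r}$ on $V$ and $t^{-r}$ on a chosen complement $Q$ has $\lim_{t\to 0}\phi_t(x) = i(p(x))$, so any $\SLL(n)$-invariant function takes the same value at $x$ and at $i(p(x))$. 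This reduces the theorem to showing that regular functions on $\mathcal{S} - R_{\mathcal{S}}$ are constant, i.e., that $h^0(\mathcal{S},\mathcal{O}(mR_{\mathcal{S}})) = 1$ for all $m > 0$. After identifying $\mathcal{O}(R_{\mathcal{S}})$ in terms of the intersection-number-one data $(I_1,\dots,I_s)$, this is precisely Fulton's conjecture (proved by Knutson--Tao--Woodward). That input is the essential ingredient; without it, there is no way to rule out poles along the codimension-one loci you would need to cross.
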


\subsection{A basic diagram}
We now recall the notational setup of \cite{belkaleIMRN}, recasting some definitions in the language of stacks. This will be used in the proofs of
 Theorems \ref{one} and \ref{may11}.

\begin{defi}\label{defiSS}
Let $\mathcal{S}=\mathcal{S}(r,n-r)$ be the smooth Artin stack parameterizing data
$(V, F'_{\bull}(1),\dots, F'_{\bull}(s))$,  $(Q, F^{''}_{\bull}(1),\dots,,F^{''}_{\bull}(s))$ and
an isomorphism $\det V\tensor \det Q=\Bbb{C}$. Here $V$ and $Q$ are arbitrary vector spaces of
ranks $r$ and $n-r$ respectively, and $F'_{\bull}(i)$ (respectively $F^{''}_{\bull}(i)$) are complete flags on $V$ (respectively $Q$) for $i=1,\dots,s$.

There is a determinantal divisor $R_{\mathcal{S}}$ on $\mathcal{S}$  of pairs $(V, F'_{\bull}(1),\dots, F'_{\bull}(s))$ and $(Q, F^{''}_{\bull}(1),\dots,,F^{''}_{\bull}(s))$  such that there is a non-zero map $\phi:V\to Q$ such that for all $i=1,\dots,s$, and $b\in I_i$ we have setting $m=I_i\cap\{1,\dots,b\}$, $\phi(F'_{m}(i))\subseteq F^{''}_{b-m}(i)$ \cite{belkaleIMRN}.
\end{defi}

Given $(V,F_{\bull}(1),F_{\bull}(2),\dots,F_{\bull}(s))\in Y^0$, we get $s$ complete flags on $V$ (from intersecting the flags with $V$) , as well as $s$ complete flags on $Q=\Bbb{C}^n/V$, by taking images under the natural map $\Bbb{C}^n\to Q$. This produces a map $p:Y^0/\SLL(n)\to \mathcal{S}$ ($Y^0$ was defined in Equation \eqref{Schubert}, and $Y^0/\SLL(n)$ is the stack quotient).  There is also a ``direct sum'' mapping  $i:\mathcal{S}\to Y^0/\SLL(n)$, where we choose an isomorphism $V\oplus Q\to \Bbb{C}^n$ consistent with determinants, and transfer flags
on $V$ and $Q$ to $\Bbb{C}^n$ so that $V$ sits in the desired Schubert cells.
(For example $F_{i}(1)= F'_{b}(1)\oplus F''_{i-b}(1)$ where $b=I_1\cap \{1,\dots,i\}$).

Therefore we have a diagram (with $i'=\pi\circ i$, and $i\circ p$ is the identity on $\ms$)
\begin{equation}\label{basicdiagram}
\xymatrix{
& Y^0/SL(n)\ar[dl]^{\pi}\ar[dr]_{p}\\
\Fl(n)^s/SL(n)  & &     \mathcal{S}\ar@/_/[ul]_i\ar[ll]^{i'}}
\end{equation}

It is also known from the description of the ramification divisor in \cite{belkaleIMRN}, that the ramification divisor $R^0$ in $Y^0$, is the pull back of a determinantal divisor
$R_{\mathcal{S}}$ in $\mathcal{S}$ (see Definition \ref{defiSS}).
\subsection{Proof of Theorem 2.5}\label{may11proof}
 We prove Theorem \ref{may11} in two steps:
 \subsubsection{Step (a)}
We claim that any invariant function on $Y^0-R^0$ restricts to a constant on $\mathcal{S}-R_{\mathcal{S}}$ when pulled back via $i$. We show this by showing
$h^0(\mathcal{S},\mathcal{O}(mR_{\mathcal{S}}))=1$ for all integers $m>0$. This will be shown to be a consequence of a conjecture of Fulton proved by Knutson, Tao and Woodward as follows:

There is a tautological natural map $\Fl(r)^s/\SLL(r)\times \Fl(n-r)^s/\SLL(n-r)\to\mathcal{S}$,
The divisor $R'\subset \Fl(r)^s/\SLL(r)\times \Fl(n-r)^s/\SLL(n-r)$ pulled back from $R_{\mathcal{S}}\subset \mathcal{S}$ was identified in \cite{belkaleIMRN} (also see \cite[Section 6.3]{BK2} for a more complete form of this computation) to be a divisor of the line bundle $L_r\boxtimes L_{n-r}$, where, using Definition \ref{seeB},
\begin{itemize}
\item$L_r=L_{\mu(I_1)}\tensor L_{\mu(I_2)}\tensor\dots\tensor L_{\mu(I_s)}$
\item $L_{n-r}=L_{\mu'(I_1)}\tensor L_{\mu'(I_2)}\tensor\dots\tensor L_{\mu'(I_s)}$.
\end{itemize}
\begin{defi}\label{seeB}
Let $I=\{i_1<\dots<i_r\}$ be a subset of $[n]$. Then $\mu(I)$ is the $r\times (n-r)$ Young diagram
$(\mu_1\geq \mu_2\geq\dots\geq \mu_r)$ with $\lambda_a=n-r+a-i_a$ for $a=1,\dots,r$.
and $\mu'(I)$ is the $r\times (n-r)$ Young diagram which is the transpose of $(n-r-\mu_r,\dots,n-r-\mu_1)$ with $\mu_1,\dots,\mu_n$ as above.
That is, $\lambda'(I)$ is dual of the transpose of $\lambda(I)$.
\end{defi}
Now the rank of $(V_{\mu(I_1)}\tensor V_{\mu(I_2)}\tensor\dots\tensor V_{\mu(I_s)})^{\SL(r)}$ equals the multiplicity of the class of a point in the product  $\sigma_{I_1}\dots\sigma_{I_s}$, which is one by assumption. The rank of $(V_{\mu'(I_1)}\tensor V_{\mu'(I_2)}\tensor\dots\tensor V_{\mu'(I_s)})^{\SL(n-r)}$ is the multiplicity of the corresponding dual Schubert problem in
$\Gr(n-r,n)$, also one by Grassmann duality. These ranks are one even upon scaling all $\mu(I_i)$ etc by $m>0$  by Fulton's conjecture \cite{KTW} (also see Theorem \ref{shosty} (2)). Therefore the space of $\SLL(r)\times \SLL(n-r)$ invariant sections of $\mathcal{O}(mR')$ on $\Fl(r)^s\times \Fl(n-r)^s$ is one dimensional for any $m$. This completes the proof of (a).

\subsubsection{Step (b)} The value of any $\SLL(n)$ invariant regular function at a point $x\in Y^0-R^0$ coincides with its value at $i(p(x))$. This is true because a representative of $i(p(x))$ is in the orbit closure of $x$: If $x= (V,F_{\bull}(1),F_{\bull}(2),\dots,F_{\bull}(s))\in Y^0$, choose a direct sum decomposition $\Bbb{C}^n=V\oplus Q$, and let $\phi_t\in \SLL(n)$ be an automorphism which is multiplication by $t^{n-r}$ on $V$ and multiplication by $t^{-r}$ on $Q$. The point $i(p(x))=\lim_{t\to 0}\phi_t(x)$.

This completes the proof of Theorem \ref{may11}.

\subsection{Proof of Theorem 1.9}\label{lastpara}
It is easy to see from the birationality of $\pi:Y\to \Fl(n)^s$ that $Y^0-R^0\to \Fl(n)^s$ is an open embedding (see e.g., \cite[Section 3]{BKR}).
 We claim,
 \begin{itemize}
 \item  $\pi(Y^0-R^0)\subseteq \Fl(n)^s-D$.
 \end{itemize}
 If $x\in Y^0-R^0$ maps to a point $(F_{\bull}(1),\dots,F_{\bull}(s))\in D\subseteq \Fl(n)^s$, then the intersection of closed Schubert varieties $\cap_{i=1}^s\Omega_{I_i}(F_{\bull}(i))$ is disconnected with one isolated point given by the first coordinate of $x$, which is ruled out by Zariski's main theorem. This proves the claim.

 Therefore
 a  $\SLL(n)$ invariant function on $\Fl(n)^s-D$ restricts to an $\SLL(n)$ invariant function on $Y^0-R^0$,
 and is hence a constant by Theorem \ref{may11}. This completes the proof of Theorem \ref{one}(3).

Lemma \ref{Fproperty}, and Proposition \ref{DaDivisor}, show that $\Bbb{Q}_{\geq 0}(\killing(\lambda_1),\killing(\lambda_2),\dots,\killing(\lambda_s))$ is an extremal ray of $\gammanq$. This shows (1) of Theorem \ref{one}.

Clearly $\mathcal{O}(D)$ has an invariant  section $1$  that does not vanish on the image of $\mathcal{S}-R_{\mathcal{S}}\to \Fl(n)^s$ (which lands inside $\pi(Y^0-R^0)$, and hence avoids $D$). Therefore by  GIT, see e.g., \cite[Proposition 10]{BK}, the ray  $\Bbb{Q}_{\geq 0}(\killing(\lambda_1),\dots,\killing(\lambda_s))$ lies in $\mf$.
(The Mumford inequality \eqref{evineq} measures the order of vanishing of an invariant section at  $i(p(x))=\lim_{t\to 0}\phi_t(x)$.)
The technique used above, of showing that the line bundle $\mathcal{O}(D)$ gives a point of $\mf$ because it has an invariant  section that does not vanish on $\mathcal{S}-R_{\mathcal{S}}$ first appeared in \cite[Section 4]{R1}.

\begin{remark}\label{markJ}
The relations between   Mumford indices in GIT and Hermitian eigenvalue inequalities used above is shown  in \cite{Totaro} and \cite[Section 7.3]{BK}.
\end{remark}

\section{Cycle classes of the basic divisors}
\subsection{Universal cycle classes of Schubert varieties}
\begin{defi}\label{defalpha}
Let $1\leq b\leq n-1$. Define  $\alpha_b\in A^1(\Fl(n)), b=1,\dots,n-1$ as the first Chern class of the line bundle on   $\Fl(n)$ whose fiber over
$F_{\bullet}: 0\subsetneq F_1\subset F_2\subsetneq \dots \subsetneq F_{n}=\Bbb{C}^n$ coincides with $(\wedge^a F_b)^*$.
\end{defi}
\begin{defi} For every subset $A\subseteq [n]$ of cardinality $r$, define a universal
 Schubert variety $\Omega^{\operatorname{Univ}}_A\subset \Gr(r,n)\times \Fl(n)$,
 $$\Omega^{\operatorname{Univ}}_A=\{(V,F_{\bull})\mid V\in\Omega_I(F_{\bull})\}$$
\end{defi}
The codimension $m$ Chow group
$A^m(\Gr(r,n)\times \Fl(n))$  is a direct sum
\begin{equation}\label{directsum}
\oplus_{s=0}^m A^{m-s}(\Gr(r,n))\tensor A^s(\Fl(n))
\end{equation}
Here cycles on $\Gr(r,n))$ and $\Fl(n)$ are pulled back under the flat projections and multiplied in the Chow ring (which coincides with cohomology for
$\Gr(r,n)$ and $\Fl(n)$, and their products).

\begin{proposition}\label{cycleclass}
Let $A$ be a subset of $[n]$ of cardinality $r$.  Let $m=|\sigma_A|$.
\begin{enumerate}
\item The projection of the cycle class $[\Omega^{\operatorname{Univ}}_A]\in A^m(\Gr(r,n)\times \Fl(n))$ to the summand $s=0$
of the direct sum \eqref{directsum} is $\sigma_A$.
\item The projection of the cycle class $[\Omega^{\operatorname{Univ}}_A]\in A^m(\Gr(r,n)\times \Fl(n))$ to the summand (of the direct sum \eqref{directsum}) $A^{m-1}(\Gr(r,n))\tensor A^1(\Fl(n))$
is equal to
$$\sum_{b\in[n-1], b\in A,\ b+1\not\in A} \sigma_{A_b}\tensor  \alpha_b$$
where $A_b=(I-\{b\})\cup \{b+1\}$ and $\alpha_b$ is defined in Definition \ref{defalpha}.
\end{enumerate}
\end{proposition}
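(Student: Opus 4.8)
\emph{Plan of proof.} The first step is to realize the two K\"unneth projections in \eqref{directsum} as concrete geometric operations. For the summand $s=0$, i.e.\ $A^m(\Gr(r,n))\otimes A^0(\Fl(n))\cong A^m(\Gr(r,n))$, restriction along $\iota_{E_\bullet}\colon \Gr(r,n)\times\{E_\bullet\}\hookrightarrow\Gr(r,n)\times\Fl(n)$ for a chosen reference flag $E_\bullet$ annihilates every summand with $s>0$ and is the identity on the $s=0$ summand, so the $s=0$ part of a class $\gamma$ is $\iota_{E_\bullet}^*\gamma$. For the summand $s=1$, let $C_b\in A^{\dim\Fl(n)-1}(\Fl(n))$ be the Schubert curve class dual to $\alpha_b$ (so $\deg(\alpha_{b'}\cdot C_b)=\delta_{b,b'}$ for $1\le b,b'\le n-1$); then $\gamma\mapsto\pr_{\Gr(r,n)*}\!\big(\pr_{\Fl(n)}^*C_b\cdot\gamma\big)$ annihilates the summands with $s=0$ or $s\ge 2$ and, on the $s=1$ summand, extracts the coefficient of $\alpha_b$. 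Thus (1) is equivalent to $\iota_{E_\bullet}^*[\Omega^{\operatorname{Univ}}_A]=\sigma_A$, and (2) to computing $\pr_{\Gr(r,n)*}\!\big(\pr_{\Fl(n)}^*C_b\cdot[\Omega^{\operatorname{Univ}}_A]\big)\in A^{m-1}(\Gr(r,n))$ for each $b$. Part (1) is then immediate: $\Omega^{\operatorname{Univ}}_A\to\Fl(n)$ is a $\GL(n)$-equivariant fiber bundle, hence flat, with fiber over $E_\bullet$ equal to $\Omega_A(E_\bullet)$, so $\iota_{E_\bullet}^*[\Omega^{\operatorname{Univ}}_A]=[\Omega_A(E_\bullet)]=\sigma_A$.

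For (2) I would represent $C_b$ by an explicit Schubert curve: fix $E_\bullet$ and let $B_b\cong\Bbb{P}^1$ be the pencil of complete flags $F_\bullet$ with $F_c=E_c$ for all $c\ne b$ and $E_{b-1}\subset F_b\subset E_{b+1}$; directly from Definition \ref{defalpha} one checks $[B_b]=C_b$. Since $\pr_{\Fl(n)}|_{\Omega^{\operatorname{Univ}}_A}$ is flat, $\Omega^{\operatorname{Univ}}_A$ meets $\Gr(r,n)\times B_b$ in the expected dimension $\dim\Gr(r,n)-m+1$, this intersection is proper, and $\pr_{\Gr(r,n)*}\!\big(\pr_{\Fl(n)}^*C_b\cdot[\Omega^{\operatorname{Univ}}_A]\big)=\pr_{\Gr(r,n)*}\big[\Omega^{\operatorname{Univ}}_A\cap(\Gr(r,n)\times B_b)\big]$. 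Now $\Omega^{\operatorname{Univ}}_A\cap(\Gr(r,n)\times B_b)$ is the incidence variety $\{(V,F_\bullet): F_\bullet\in B_b,\ V\in\Omega_A(F_\bullet)\}$, a $\Bbb{P}^1$-family of Schubert varieties whose total space is irreducible; computing its image in $\Gr(r,n)$ and the degree of the map onto that image gives the class.

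The geometric heart is a case analysis, writing $A=\{i_1<\dots<i_r\}$. If $b\notin A$, the conditions cutting out $\Omega_A(F_\bullet)$ involve only $F_{i_1},\dots,F_{i_r}$, so the incidence is $\Omega_A(E_\bullet)\times B_b$, whose image $\Omega_A(E_\bullet)$ has dimension $\dim\Gr(r,n)-m$, and the pushforward vanishes in $A^{m-1}(\Gr(r,n))$. If $b=i_j$ and $b+1=i_{j+1}\in A$, the condition $\dim V\cap E_{b+1}\ge j+1$ forces $\dim V\cap F_b\ge j$ for every $F_b$ in the pencil, so again the incidence is $\Omega_A(E_\bullet)\times B_b$ with image of dimension $\dim\Gr(r,n)-m$, and the pushforward vanishes. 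Finally, if $b=i_j\in A$, $b+1\notin A$ and $b\le n-1$: away from the proper closed subset where $\dim V\cap E_{b-1}\ge j$, a pair $(V,F_\bullet)$ with $F_\bullet\in B_b$ lies in the incidence iff $\dim V\cap E_{i_l}\ge l$ for $l\ne j$ and $\dim V\cap E_{b+1}\ge j$, i.e.\ iff $V\in\Omega_{A_b}(E_\bullet)$ with $A_b=(A-\{b\})\cup\{b+1\}$ (and $|\sigma_{A_b}|=m-1$); moreover for such $V$ the admissible $F_b$ is unique, namely $E_{b-1}+\langle w\rangle$ for any $w\in(V\cap E_{b+1})\setminus E_{b-1}$, the quotient $(V\cap E_{b+1}+E_{b-1})/E_{b-1}$ being one-dimensional. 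Hence the incidence maps birationally onto $\Omega_{A_b}(E_\bullet)$, the pushforward is $\sigma_{A_b}$, and summing over all $b$ yields the formula in (2).

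The main obstacle is this last case: one must carefully identify the image of the incidence with the single Schubert variety $\Omega_{A_b}(E_\bullet)$ and check that the incidence-to-image map has degree exactly one. This means verifying that the rank-jump locus $\{\dim V\cap E_{b-1}\ge j\}$ is negligible (lower-dimensional, and contained in the closure of the dense part of the incidence rather than forming a separate component), keeping precise track of the functions $\dim V\cap E_c$ for $c\in\{b-1,b,b+1\}$, and confirming generic reducedness so that no multiplicity greater than one appears. By comparison, the properness of the relevant intersections (which follows from flatness of $\Omega^{\operatorname{Univ}}_A\to\Fl(n)$) and the identification of the K\"unneth projection maps are routine.
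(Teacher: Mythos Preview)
Your proposal is correct and follows essentially the same route as the paper. Both arguments extract the K\"unneth components by pairing with a point (for $s=0$) and with the Schubert curve $C_b=B_b$ in $\Fl(n)$ (for $s=1$), and both do the identical three-case analysis ($b\notin A$; $b,b+1\in A$; $b\in A$, $b+1\notin A$), concluding in the last case that the incidence over $C_b$ maps birationally to $\Omega_{A_b}(E_\bullet)$ via the uniqueness of the admissible $F_b$. The only cosmetic difference is that the paper further cuts by a generic $\Omega_J(T_\bullet)$ in the Grassmannian factor and computes intersection \emph{numbers}, whereas you push forward along $\pr_{\Gr(r,n)}$ and read off the \emph{class} directly; these are dual formulations of the same calculation.
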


\begin{proof}
  To prove (1) intersect with  $\Omega_J(T_{\bull})\times p$ where $p=F_{\bull}$ is a fixed point of $\Fl(n)$ and $T_{\bull}$ a flag in general position with respect to $p$ and $|\sigma_J|+|\sigma_A|=r(n-r)$. The intersection number is zero unless $\sigma_A$ is dual to $\sigma_J$ under the intersection pairing. Therefore the cycle class is as stated.

To prove (2), let $a\in[n-1]$, and we intersect  $\Omega^{\operatorname{Univ}}_A$ with $\Omega_J(T_{\bull})\times C_b$ where $C_b$ is a rational curve of all complete flags $F_{\bull}$ with $F_j$ fixed for $j\neq b$ (so $F_b$ varies but is constrained to lie inside a fixed $F_{b+1}$ and containing a fixed $F_{b-1}$,  and $|\sigma_J|+|\sigma_A| =r(n-r)+1$.
The closed Schubert variety $\Omega_A(F_{\bull})$ does not depend upon the choice of $F_b$ if $b\not\in A$ or if $b\in A$ and $b+1\in A$. The intersection number is zero then for codimension reasons.

If $b\in A$ and $b+1\not\in A$, then the union of the subvarieties $\Omega_A(F_{\bull})$ as $F_{\bull}$ varies over the curve $C_b$ equals
$\Omega_{\widetilde{A}}(S_{\bull})$ where $S_{\bull}$ is a fixed point of $C_b$
and $\widetilde{A}=(A-\{b\})\cup\{b+1\}$. It is easy to see that the desired intersection
number counts number of points in $\Omega_{\widetilde{A}}(S_{\bull})\cap \Omega_J(T_{\bull})$, since for any point $V$ in this intersection
$\dim V\cap S_{b-1}=1+ \dim  V\cap S_{b+1} $, and therefore there is a
unique choice of $S_{b-1}\subset F_b\subset S_{b+1}$ such that
$V\cap F_b=V\cap S_{b+1}$.  Here we have used the fact that the flag
$T_{\bull}$ is general position with the fixed $S_{\bull}$. This yields the formula for the cycle class of the projection.
\end{proof}

\subsection{Proof of Proposition 1.10}\label{may12}
Let  $X=\Gr(r,n)\times \Fl(n)^s$.
Now let $\pi_1,\pi_2,\dots,\pi_s$ be the $s$ projections $X\to \Gr(r,n)\times \Fl(n)$. It is easy to see that $\yiy$ is the generically transverse intersection of $s$ subschemes of $X$:
\begin{equation}\label{above}
\yiy=\bigcap_{i=1}^s\pi_1^{-1}(\Omega^{\operatorname{Univ}}_{A_i})
\end{equation}
(Note $\yiy$ is generically smooth, and coincides with the scheme theoretic intersection above by definition, see e.g., \cite[Section 5]{BKR}.  For generic transversality,  we only have to verify the numerical codimension condition which
is easy.)

The class $[(D(A_1,\dots,A_s)]$ is $\pi_*[\yiy]$ where $\pi:X\to  \Fl(n)^s$. To get a non-zero contribution to $\pi_*[\yiy]$, we need the projection of  $[\Omega^{\operatorname{Univ}}_{A_i}]$
to the summands $s=0$ and $s=1$ in the direct sum decomposition of $A^{|\sigma_{A_i}|}(\Gr(r,n)\times \Fl(n)$ given by Proposition\ref{cycleclass}  for $i=1,\dots,s$. This is because the  degree of cycles on $\Gr(r,n)$ have to sum to $r(n-r)$ to give a non-zero push forward via $\pi$. We can therefore see that we need to pick the $s=0$ terms for all but one of the  $A_i$ and
one $s=1$ term.  The computations of Proposition \ref{cycleclass} now imply the desired formulas for $\lambda_1,\dots,\lambda_s$.

\begin{example}\label{another}
 The following is another example of the formulas in Proposition \ref{egreggy}. Let $r=5$, $n=8$, $s=3$ and $I_1=\{3,4,5,7,8\}\subset[8]$, $I_2=I_3=\{2,3,5,6,8\}\subset [8]$.  Let $(j_0,a_0)=(1,3)$ In can be checked using the Littlewood-Richardson rule that the relevant intersection number is one.  We get $\lambda_1=(3,3,2,2,2,0,0,0)=2\omega_5+\omega_2$
and $\lambda_2=\lambda_3=(4,4,4,2,2,2,0,0)=2\omega_6+ 2\omega_3$.
\end{example}
\begin{remark}
The proof of Proposition \ref{egreggy} shows that it is valid in a more general context: Let $A_1,\dots,A_s$ be subsets of $[n]$ each of cardinality $r$ such that $\sum_{j=1}^s |\sigma_{A_i}|=r(n-r)+1$. Then we can define a divisor class in $A^1(\Fl(n)^s)$ as the push forward (which is possibly zero) of the fundamental class $[\yiy]$ where $\yiy$ is defined as in
\eqref{above}. This class is supported on the image of $\yiy\to \Fl(n)^s$. The formulas in Proposition \ref{egreggy}
give this divisor class in this generality.
\end{remark}

\section{The facet $\mf$ as a product}

\subsection{Some elementary observations}
\begin{lemma}\label{faceto}
\begin{enumerate}
\item Any extremal ray of $\gammanq$ lies on a regular facet of $\gammanq$ (i.e., the facet is not obtained as intersection of $\gammanq$ with a chamber wall).
\item Any extremal ray of $\gammanq$ lies on a Weyl chamber wall of $(\frh_{+})^s$.
\end{enumerate}
\end{lemma}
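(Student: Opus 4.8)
The plan is to prove (2) first and then deduce (1) from it, since (2) is the geometrically transparent statement and (1) follows by combining (2) with the known characterization of chamber walls versus regular facets. For (2), suppose toward a contradiction that $x = (x_1,\dots,x_s) \in \gammanq$ generates an extremal ray but lies in the interior of the Weyl chamber $(\frh_{+,n,\Bbb{Q}})^s$, i.e.\ each $x_i$ has all coordinates strictly decreasing: $x_i^{(1)} > x_i^{(2)} > \dots > x_i^{(n)}$. The key observation is that the ``diagonal'' or ``regularizing'' direction $\rho = (\rho,\dots,\rho)$ with $\rho = (n-1, n-3, \dots, -(n-1))/2$ (the half-sum of positive roots, suitably normalized so $\sum_a \rho^{(a)} = 0$) lies in the eigencone $\Gamma_n(s)$ for $s \geq 3$ — indeed any $s$-tuple of matrices with a common eigenbasis summing to zero works, and one can arrange the eigenvalues freely subject to $\sum_i A_i = 0$, so $(\rho,\rho,-2\rho,0,\dots,0)$ and similar configurations show $\rho$-type tuples are in the eigencone. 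More cleanly: since $x$ is strictly dominant in every coordinate and $s \geq 3$, both $x + \epsilon v$ and $x - \epsilon v$ remain in $(\frh_{+,n,\Bbb{Q}})^s$ for small $\epsilon > 0$, where $v$ is a suitable nonzero tuple; if we can choose $v$ so that $x \pm \epsilon v$ both remain in $\Gamma_n(s)$, then $x$ is not extremal. The existence of such a $v$ follows because $\Gamma_n(s)$ is cut out (Theorem \ref{KlBe1}) by the inequalities \eqref{evineq}, and at an interior point of the Weyl chamber none of the chamber-wall constraints is active; so the only active constraints at $x$ are genuine Klyachko inequalities $\sum_j \sum_{a \in I_j} x_j^{(a)} = 0$. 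If \emph{every} Klyachko inequality active at $x$ forced $x$ into a ray, then $x$ lies on a codimension-$\geq n s - \dim\frh^s_{+,n} + \dots$ intersection; the point is that a single vertex of the rational polyhedral cone that is not on any chamber wall would have to be a common zero of enough Klyachko functionals, and one checks this is impossible by a dimension count, or more simply: the ray through a strictly-dominant $x$ can be perturbed within the open chamber in the direction that decreases no active Klyachko functional, using that the active functionals, being a proper subset of a spanning set, do not span $(\frh_{n,\Bbb{Q}}^s)^*$ unless $x = 0$.

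For the dimension count making this precise: an extremal ray is a $1$-dimensional face, hence is the common zero locus of $ns - s - 1$ (the dimension of $\frh_{n,\Bbb{Q}}^s$ minus one) linearly independent supporting functionals of $\gammanq$. These functionals are either Klyachko functionals $f_{(r,n,\vec I)}$ or Weyl-chamber functionals $x_i^{(b)} - x_i^{(b+1)}$. I would argue that the Klyachko functionals alone cannot cut out a ray: indeed if they did, the corresponding face would be a regular face of dimension one, but by Ressayre's characterization \cite{R2} (or already by the structure of the deformed product) regular faces of $\gammanq$ that are not chamber-related have dimension $\geq 2$ — concretely, the eigencone always contains the line $\Bbb{R}\rho$-translates... — so at least one chamber functional $x_i^{(b)} - x_i^{(b+1)}$ must vanish on the ray. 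That is exactly statement (2). Statement (1) is then immediate: by Lemma \ref{plum}-style reasoning or just by definition, the facet of $\gammanq$ through an extremal ray obtained by ``completing'' a set of supporting functionals containing at least one Klyachko functional (which exists, since an extremal ray cannot lie only on chamber walls — otherwise $x$ would have many coordinate equalities but the chamber walls alone bound $\frh^s_{+,n}$, not $\Gamma_n(s)$, and their common zero locus inside $\Gamma_n(s)$ contains points outside any single ray for $s\geq 3$) is a regular facet.

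The main obstacle I anticipate is making the claim ``an extremal ray must lie on at least one regular facet, not solely on chamber walls'' fully rigorous: one must rule out the degenerate possibility that some vertex of $\gammanq$ is the common zero of only chamber-wall functionals. The cleanest route is: the intersection of all the chamber walls through $x$ is a linear subspace $\Lambda \subseteq \frh_{n,\Bbb{Q}}^s$ determined by the coordinate-equality pattern of $x$; inside $\Lambda$, the cone $\gammanq \cap \Lambda$ is the eigencone for a product of smaller $\SLL$'s (a Levi), which for $s \geq 3$ is genuinely higher-dimensional (it contains a $2$-plane), so $x$ cannot be extremal in $\gammanq$ using chamber walls alone — at least one Klyachko functional must be active, and completing to a facet gives the regular facet required in (1). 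I would also double-check the base cases (small $n$, or $n=2$ where $\Gamma_2(s)$ is a simplicial cone) separately, as the Levi reduction there is trivial. Once this ``Levi is bigger'' lemma is in hand, both (1) and (2) drop out; the rest is bookkeeping with the inequality description from Theorem \ref{KlBe1}.
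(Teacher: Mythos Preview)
Your overall strategy has the right ingredients but the argument for (2) has a genuine gap, and the order you chose makes it harder, not easier.

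For (2) you eventually rest everything on the claim ``regular faces of $\gammanq$ have dimension $\geq 2$'', citing Ressayre. But that claim is logically equivalent to (2) itself: a one-dimensional face is an extremal ray, and saying it is not a regular face is exactly saying it lies on a chamber wall. Ressayre's work \cite{R2} gives a \emph{characterization} of regular faces in terms of the deformed product; it does not hand you a dimension bound without further work. Your earlier attempts (perturbing by $\rho$, or finding some $v$ with $x\pm\epsilon v\in\Gamma_n(s)$) are on the right track but are never completed: you never exhibit the direction $v$ or explain why all the active Klyachko functionals annihilate it.

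The paper proceeds in the opposite order and this is the missing idea. For (1), it observes that $\gammanq$ is full-dimensional in $(\frh_{+,n,\Bbb{Q}})^s$, so an extremal ray lying on no regular facet would have to be an extremal ray of the ambient chamber $(\frh_{+,n,\Bbb{Q}})^s$; but those rays have exactly one $x_i$ nonzero, which is impossible since $\sum A_j=0$ forces $A_i=0$. (Your Levi-reduction version of this is correct in spirit but more complicated than needed.) Then for (2), knowing the ray lies on some regular facet $\mf$ given by $(r,n,I_1,\dots,I_s)$, the paper constructs the deformation you were looking for explicitly: the matrices $A_i$ can be taken block-diagonal for the splitting $\Bbb{C}^n=\Bbb{C}^r\oplus\Bbb{C}^{n-r}$, and one perturbs $A_1'(t)=A_1'+\tfrac{t}{r}I_r$, $A_2'(t)=A_2'-\tfrac{t}{r}I_r$, with compensating shifts on the $(n-r)$-blocks, so that traces and the sum-zero condition are preserved. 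Strict dominance guarantees the eigenvalue ordering is unchanged for small $t$, giving a two-sided linear deformation inside $\Gamma_n(s)$ that contradicts extremality. This block-diagonal perturbation is the concrete $v$ your argument is missing; finding it requires first placing the ray on a regular facet, which is why (1) should come first.
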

\begin{proof}
Let $R=\Bbb{Q}_{\geq 0}(x_1,\dots,x_n)$ be an extremal ray in $\gammanq$ which is not on any of the reqular facets of $\gammanq$. It is then easy to see that $R$ is then an extremal ray of $(\frh^{+}_{n,\Bbb{Q}})^s$, which is necessarily of the form $\Bbb{Q}_{\geq 0}(x_1,\dots,x_s)$ with exactly one of the $x_i$ non-zero. This implies that exactly one of the $s$ matrices $A_1,\dots,A_s$ in Definition \ref{nafnew} is non-zero. But this leads to contradiction since $\sum A_j=0$. This proves the first part.

For the second part (which is well known) we proceed as follows. Suppose the contrary. It  lies on a regular facet $\mf$ in (1). We look at the corresponding Hermitian eigenvalue problem: The matrices $A_1,\dots,A_s$ with $\sum A_i=0$ with eigenvalues $x_1,\dots,x_s$ can then be assumed to  preserve the corresponding  direct sum decomposition $\Bbb{C}^n=\Bbb{C}^r\oplus\Bbb{C}^{n-r}$. We obtain solutions of the eigenvalue problems $(A'_1,\dots,A'_s)$ (these are $r\times r$ matrices) and $(A_1'',\dots,A_s'')$ respectively (these are $(n-r)\times (n-r)$ matrices) with $\sum A'_i=0$ and $\sum A_i''=0$ (without trace free conditions on these matrices).

Consider the one parameter family $A'_1(t)=A'_1+\frac{t}{r}I_r$, $A'_2(t)=A'_2-\frac{t}{r}I_r$, $A'_i(t)=A'_i$ for $i>2$; and $A_1''(t)=A_1''-\frac{t}{n-r}I_{n-r}$, $A_2''(t)=A_2''+\frac{t}{n-r}I_{n-r}$ and $A_i''(t)=A_i''$, $i>2$ for $-\epsilon<t<\epsilon$ for small $\epsilon >0$ and $t$ rational.  Let $A_i(t)$ be the direct sum $s$-tuple of $n\times n$ matrices, a tuple of traceless Hermitian matrices which sum to zero. The original ray corresponded to $t=0$, and we can deform it linearly inside the eigencone $\Gamma_n(s)$ in two opposite directions, a contradiction. The assumption of regularity of $R$ implies that we know the ordered eigenvalues of $A_i(t)$ for $|t|<\epsilon$ with small $\epsilon >0$ (not that $A_1(t)\neq A$, or a scalar multiple,  since some of the eigenvalues of $A_1(t)$ have increased, while others have decreased).
\end{proof}

\subsection{Two types of rays in $\mathcal{F}$}\label{feast}

As in the Introduction fix $(r,n,I_1,\dots,I_s)$ satisfying \eqref{dagger}. This gives rise to a facet $\mf$ of $\Gamma_n(s)$, and a facet $\mf_{\Bbb{Q}}$ of $\gammanq$ given by equality in inequality \eqref{evineq}.

We distinguish two types of rays of $\mathcal{F}$:
A ray  $\Bbb{Q}_{\geq 0} (x_1,\dots,x_n)\in \mathcal{F}_{\Bbb{Q}}$ is called a type I ray of $\mf_{\Bbb{Q}}$ if there
exists $j\in [s]$ and a $b>1$ such that $b\in I_j$, $b-1\not\in I_j$, and  and $x^{(b)}_{j}\neq x^{(b-1)}_j$.

A ray $\Bbb{Q}_{\geq 0}(x_1,\dots,x_n)\in \mathcal{F}_{\Bbb{Q}}$ is called a type II ray of $\mf_{\Bbb{Q}}$  if it is not a type I ray of $\mf_{\Bbb{Q}}$. Points on Type II rays of $\mf_{\Bbb{Q}}$
form a polyhedral subcone $\mf_{2,\Bbb{Q}}$, which is a face of $\mathcal{F}_{\Bbb{Q}}$ (as in Definition \ref{dmfq2}).

An extremal ray of $\gammanq$ may lie on two different facets. It is possible that it is a type I ray of one of these facets, and a type II ray of the other (see Section \ref{tonne} for an example).
\subsubsection{Proof of Theorem 1.15}
Let us first note that the basic extremal rays obtained from $(r,n,I_1,\dots,I_s)$ and a choice of $j_0$ and $a_0$ are type I on the face $\mf$ defined by  $(r,n,I_1,\dots,I_s)$. This can be seen by inspecting Proposition \ref{egreggy} and noticing that $\lambda_{j_0}^{(a_0-1)}-\lambda_{j_0}^{(a_0)}=1$.

\begin{lemma}\label{shew}
The $q$ basic extremal rays on $\mf$ are linear independent.
\end{lemma}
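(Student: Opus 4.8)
The plan is to exhibit, for each of the $q$ building-block divisors, a "test curve" in $\Fl(n)^s$ on which that divisor has intersection number $1$ while all the other $q-1$ building-block divisors have intersection number $0$; linear independence of the $q$ classes in $\Pic(\Fl(n)^s)$ then follows immediately from the resulting $q\times q$ matrix of pairings being (essentially) the identity. Concretely, fix a pair $(j_0,a_0)$ with $a_0\in I_{j_0}$, $a_0>1$, $a_0-1\notin I_{j_0}$, giving the divisor $D=D(I_1,\dots,I_{j_0}^{+,a_0},\dots,I_s)$; I want a rational curve $C$ in $\Fl(n)^s$ meeting $D$ transversally in one point and disjoint from all $D(I_1,\dots,I_{j_1}^{+,a_1},\dots,I_s)$ for $(j_1,a_1)\neq(j_0,a_0)$.

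The natural candidate comes from the proof of Proposition \ref{DaDivisor}, or equivalently from the cycle-class computation in Proposition \ref{cycleclass}(2). Recall that in that proof one fixes a transverse point $V\in\bigcap_i\Omega^0_{I_i}(F_\bull(i))=\bigcap_i\Omega_{I_i}(F_\bull(i))$ and then varies only the $a_0$-th step of the $j_0$-th flag inside the pencil $C_{a_0}$ of subspaces $S$ with $F_{a_0-2}(j_0)\subset S\subset F_{a_0}(j_0)$; over this pencil the divisor $D$ is met exactly once (at the unique $S$ with $V\cap S=V\cap F_{a_0}(j_0)$). So I would take $C$ to be precisely this $\Bbb{P}^1$: the $s$-tuple of flags where all flags and all steps are held fixed at a generic configuration $(F_\bull(1),\dots,F_\bull(s))$ admitting a (unique, transverse) $V$ in the intersection $\bigcap_i\Omega_{I_i}(F_\bull(i))=[\operatorname{pt}]$, except that $F_{a_0}(j_0)$ varies in $C_{a_0}$. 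First I would verify $C\cdot D=1$: this is exactly the content of the transversality argument in Proposition \ref{DaDivisor} together with the intersection count in the proof of Proposition \ref{cycleclass}(2) (the degree-$1$ coefficient $\sigma_{(I_{j_0})_{a_0-1}}$ hitting $\alpha_{a_0-1}$ once on $C_{a_0}$). Then I would check $C\cdot D(I_1,\dots,I_{j_1}^{+,a_1},\dots,I_s)=0$ for every other pair: if $j_1\neq j_0$, the $j_1$-th flag is constant along $C$, the Schubert variety $\Omega_{I_1}^{+,a_1}(F_\bull(j_1))$ is fixed, and for generic starting configuration the intersection $\Omega^{+,a_1}_{I_{j_1}}(F_\bull(j_1))\cap\bigcap_{i\neq j_1}\Omega_{A_i}(F_\bull(i))$ is empty throughout the family by dimension count (it imposes codimension $r(n-r)+1$ on $\Gr(r,n)$ with only the one-parameter family $C$ to absorb it, and genericity kills the finitely many bad parameter values); if $j_1=j_0$ but $a_1\neq a_0$, then along $C$ only $F_{a_0}(j_0)$ moves, which does not affect the condition $\dim V\cap F_{a_1-1}(j_0)\ge(\text{appropriate value})$ defining $\Omega_{I_{j_0}^{+,a_1}}$ since $a_1-1\neq a_0-1$ and $a_1\neq a_0$ — wait, one must be slightly careful when $a_1 = a_0-1$ or $a_1-1=a_0$, i.e. when the moving index is adjacent; in those edge cases I would instead argue directly that the relevant rank condition still holds identically (or fails identically) along $C$ because moving $F_{a_0}(j_0)$ within the pencil changes $V\cap F_{a_0}(j_0)$ only at one point, and that point is disjoint from the locus cut out by $I_{j_0}^{+,a_1}$ for generic configuration.

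Assembling this, the matrix $(C_{(j_0,a_0)}\cdot D_{(j_1,a_1)})_{(j_0,a_0),(j_1,a_1)}$ is lower/block-triangular with $1$'s on the diagonal — in fact after the genericity reductions it is simply the identity — so the $q$ divisor classes are linearly independent in $\Pic(\Fl(n)^s)\otimes\Bbb{Q}$, hence in $\Pic(\Fl(n)^s)$. I expect the main obstacle to be the bookkeeping in the "other pairs" vanishing: one needs a single generic configuration $(F_\bull(1),\dots,F_\bull(s))$ (with the unique transverse $V$) that simultaneously works for all $q$ test curves and makes all off-diagonal intersections empty, and one has to handle the adjacent-index edge cases $a_1\in\{a_0-1,a_0+1\}$ and $j_1=j_0$ with care rather than by a blanket dimension count. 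An alternative, possibly cleaner, route that avoids curves entirely: use Proposition \ref{egreggy}/Remark \ref{egreggyremark}, which expresses $[D(I_1,\dots,I_{j_0}^{+,a_0},\dots,I_s)]$ as $\sum_i\sum_{b}c^{(j_0,a_0)}_{i,b}\,\omega_b$ placed in the $i$-th factor, and observe that the coefficient of $\omega_{a_0-1}$ in the $j_0$-th factor equals $\sigma_{I_1}\cdots\sigma_{I_{j_0}}\cdots\sigma_{I_s}=1$ while, for any other building block $(j_1,a_1)$, its own distinguished coefficient (of $\omega_{a_1-1}$ in the $j_1$-th factor) is $1$ and one checks the matrix of these distinguished coefficients is unitriangular — reducing linear independence to a purely combinatorial statement about which $(i,b)$ can be the "$(j_0,a_0)$-special slot". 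I would present the curve argument as the main proof since it is the most geometric and self-contained given Propositions \ref{DaDivisor} and \ref{cycleclass}.
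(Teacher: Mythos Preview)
Your proposal is correct, but the paper takes precisely your ``alternative'' route via Proposition~\ref{egreggy}, and it is over in two sentences: for the divisor $D$ attached to $(j_0,a_0)$ with $\mathcal{O}(D)=L_{\lambda_1}\boxtimes\cdots\boxtimes L_{\lambda_s}$, one has $\lambda_{j_0}^{(a_0-1)}-\lambda_{j_0}^{(a_0)}=1$ (since $(A_{j_0})^{-,a_0-1}=I_{j_0}$, giving intersection number $1$), while for any \emph{other} admissible pair $(j'_0,a'_0)$ one has $\lambda_{j'_0}^{(a'_0-1)}-\lambda_{j'_0}^{(a'_0)}=0$ by Proposition~\ref{egreggy}(1), because $a'_0-1\notin A_{j'_0}$ (indeed $a'_0-1\notin I_{j'_0}$ by hypothesis, and the only element of $A_{j_0}$ not already in $I_{j_0}$ is $a_0-1\neq a'_0-1$). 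So the matrix of these $q$ linear functionals against the $q$ divisor classes is the \emph{identity}, not merely unitriangular. Your test-curve approach is the geometric dual of this same computation: the curve varying only the $(a_0-1)$-st step of the $j_0$-th flag (a small slip in your write-up --- with the pencil $F_{a_0-2}\subset S\subset F_{a_0}$ it is $F_{a_0-1}$ that moves, not $F_{a_0}$) pairs with any class $(\lambda_1,\dots,\lambda_s)$ to give exactly $\lambda_{j_0}^{(a_0-1)}-\lambda_{j_0}^{(a_0)}$. The ``edge cases'' you flag are not genuine obstacles: the same observation $a'_0-1\notin A_{j'_0}$ says that the closed Schubert varieties defining every off-diagonal $D_{(j'_0,a'_0)}$ are \emph{constant} along your curve $C$, so a generic starting configuration keeps $C$ disjoint from them entirely, with no separate treatment of adjacent indices needed. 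What the curve picture buys is independence from the explicit formula of Proposition~\ref{egreggy}; what the coefficient argument buys is brevity.
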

\begin{proof}
This follows from the above jumping by one at $a_0$ phenomenon, and the following observation: Consider a basic extremal ray coming from the data $(r,n,I_1,\dots,I_s)$ and $j_0,a_0$. Suppose $(j'_0,a'_0)$ is a different pair producing a basic extremal
ray. Then writing
$D=D(I_1,\dots,I_{j-1},(I_j-\{a_0\})\cup\{a_0-1\},I_{j+1},\dots,I_s)$, and $\mathcal{O}(D)=L_{\lambda_1}\boxtimes L_{\lambda_2}\dots\boxtimes L_{\lambda_s}$, we have
$\lambda_{j'_0}^{(a'_0-1)}-\lambda_{j'_0}^{(a'_0)}=0$, since $a'_0-1\not\in A_{j'_{0}}$ in the formula for $\lambda_{j'_0}$ in Proposition \ref{egreggy}.
\end{proof}

Let $\delta_1,\dots,\delta_q$ be the images in $\mf_{\Bbb{Q}}$ be the images (see \eqref{pita}) of the basic extremal rays.
It is also easy now to see that the sum mapping
$$(\Bbb{Q}_{\geq 0})^q\times\mf_{2,\Bbb{Q}}\to \mf_{\Bbb{Q}}, (a_1,\dots,a_q, x)\mapsto (\sum_{i=1}^q a_i \delta_i) + x, a_i\geq 0, x\in \mf_{2,\Bbb{Q}}\to \mf_{\Bbb{Q}}$$
is injective. To show the surjection (and hence complete the proof of Theorem \ref{two}) we prove the following more refined statement:
\begin{proposition}
Suppose $\mu_1,\dots,\mu_s$ are dominant integral weights for $\SLL(n)$, $j_0\in[s]$ and $a_0\in I_{j_0}$ such that $a_0-1>0$ and $a_0-1\not\in I_{j_0}$. Let
$$D=D(I_1,\dots,I_{j_0-1}, (I_{j_0}-\{a_0\})\cup \{a_0-1\}, I_{j_0+1},\dots,I_s)$$
 be our basic divisor \eqref{basico}.
Assume further that
\begin{enumerate}
\item[(1)] $(\killing(\mu_1),\dots,\killing(\mu_s))\in \mf_{\Bbb{Q}}$
\item[(2)] $\mu_{j_0}^{(a_0)}-\mu_{j_0}^{(a_0-1)}\neq 0$ (therefore $(\killing(\mu_1),\dots,\killing(\mu_s))\not\in\mf_2$).
\end{enumerate}
Let $N=L_{\mu_1}\tensor \dots\tensor L_{\mu_s}$. Then
$$H^0(\Fl(n)^s,N)^{\SLL(n)}= H^0(\Fl(n)^s,N(-D))^{\SLL(n)}$$
\end{proposition}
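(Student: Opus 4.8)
The plan is to show that the divisor $D$ is entirely contained in the unstable locus for the $\SLL(n)$-action on $\Fl(n)^s$ linearised by $N=L_{\mu_1}\boxtimes\cdots\boxtimes L_{\mu_s}$; the asserted equality then follows formally. Multiplication by the tautological section $\mathbf 1_D$ of $\mathcal O(D)$ gives an injection $H^0(\Fl(n)^s,N(-D))^{\SLL(n)}\hookrightarrow H^0(\Fl(n)^s,N)^{\SLL(n)}$, which is $\SLL(n)$-equivariant because $D$ is $\SLL(n)$-invariant (Proposition \ref{DaDivisor}); so the content is the reverse inclusion. Since $\Fl(n)^s$ is smooth and $D$ is a reduced irreducible divisor (Proposition \ref{DaDivisor}), it is enough to prove that every point of $D$ is unstable: then every $\SLL(n)$-invariant section of $N$ vanishes at every point of $D$, hence is divisible by $\mathbf 1_D$, since an invariant section of a power of a linearised line bundle is nonzero only at semistable points.

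So fix $d=(F_{\bullet}(1),\dots,F_{\bullet}(s))\in D$. By definition of $D$ there is a $V\in\bigcap_{j=1}^s\Omega_{A_j}(F_{\bullet}(j))\subseteq\Gr(r,n)$, with $A_j=I_j$ for $j\ne j_0$ and $A_{j_0}=(I_{j_0}-\{a_0\})\cup\{a_0-1\}$. Let $B_j\subset[n]$, $|B_j|=r$, record the relative position of $V$ with respect to $F_{\bullet}(j)$, so $V\in\Omega^0_{B_j}(F_{\bullet}(j))$ and $B_j\le A_j$ coordinatewise (since $V\in\Omega_{A_j}(F_{\bullet}(j))$). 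The natural destabilising candidate is the one-parameter subgroup $\phi_V$ of $\SLL(n)$ acting by $t^{n-r}$ on $V$ and by $t^{-r}$ on a chosen complement; I would compute its Hilbert--Mumford index $\mu^N(d,\phi_V)$ with respect to $N$ by the standard calculation producing the Mumford form of the inequalities \eqref{evineq} (cf.\ Remark \ref{markJ} and \cite{Totaro}, \cite[Section 7.3]{BK}). Writing $L_{\mu_j}=\bigotimes_b(\det F_b(j))^{\otimes(\mu_j^{(b)}-\mu_j^{(b+1)})}$ and using that the intersection dimensions $\dim\big(F_b(j)\cap V\big)$ are preserved under $\lim_{t\to0}\phi_V(t)F_{\bullet}(j)$ --- so the limiting datum depends only on the positions $B_j$, not on the chosen complement --- one obtains, up to a positive normalisation and in the sign convention under which semistability of $(x_1,\dots,x_s)$ is the inequality $\sum_j\sum_{a\in I_j}x_j^{(a)}\le 0$,
$$\mu^{N}(d,\phi_V)\;=\;-\,n\sum_{j=1}^s\;\sum_{a\in B_j}\killing(\mu_j)^{(a)}.$$

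Next I would estimate this using the two hypotheses. Each $\mu_j$ is dominant, so $\killing(\mu_j)$ is weakly decreasing, and $B_j\le A_j$ coordinatewise gives $\sum_{a\in B_j}\killing(\mu_j)^{(a)}\ge\sum_{a\in A_j}\killing(\mu_j)^{(a)}$ for all $j$; hence
$$\sum_{j=1}^s\sum_{a\in B_j}\killing(\mu_j)^{(a)}\;\ge\;\sum_{j=1}^s\sum_{a\in A_j}\killing(\mu_j)^{(a)}\;=\;\Big(\sum_{j=1}^s\sum_{a\in I_j}\killing(\mu_j)^{(a)}\Big)+\big(\killing(\mu_{j_0})^{(a_0-1)}-\killing(\mu_{j_0})^{(a_0)}\big),$$
the equality because $A_j=I_j$ for $j\ne j_0$ and $A_{j_0}=(I_{j_0}-\{a_0\})\cup\{a_0-1\}$. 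The first bracket vanishes by hypothesis (1), as $(\killing(\mu_1),\dots,\killing(\mu_s))\in\mf_{\Bbb Q}$ lies on the facet cut out by equality in \eqref{evineq}; and $\killing(\mu_{j_0})^{(a_0-1)}-\killing(\mu_{j_0})^{(a_0)}=\mu_{j_0}^{(a_0-1)}-\mu_{j_0}^{(a_0)}>0$ by hypothesis (2) together with dominance of $\mu_{j_0}$. Therefore $\mu^{N}(d,\phi_V)<0$, so $\phi_V$ destabilises $d$; thus $d$ is unstable, every $\SLL(n)$-invariant section of $N$ vanishes at $d$, and, $d\in D$ being arbitrary, every such section vanishes along $D$. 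This yields the equality.

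The one step requiring genuine care is the displayed computation of $\mu^{N}(d,\phi_V)$: getting the sign consistent with the semistability convention attached to \eqref{evineq}, checking that its value depends only on the positions $B_j$ and not on the complement used to build $\phi_V$, and observing that the same formula is valid with the possibly more special positions $B_j$ in place of $A_j$ --- it is exactly this that allows hypothesis (2) to contribute a strictly negative term, even though $\sum_j|\sigma_{A_j}|=r(n-r)+1$, so $(r,A_1,\dots,A_s)$ is not itself a Klyachko datum. Everything else --- the trivial inclusion, the $\SLL(n)$-invariance and irreducibility of $D$ (Proposition \ref{DaDivisor}), and the implication ``unstable point $\Rightarrow$ in the base locus of all invariant sections'' --- is standard.
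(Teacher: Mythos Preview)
Your proposal is correct and follows essentially the same route as the paper: show that every point of $D$ is unstable for the linearisation $N$ by exhibiting the one-parameter subgroup $\phi_V$ attached to a point $V$ in the intersection \eqref{picky}, so that every invariant section vanishes along $D$. The paper's proof is simply a terser version of yours, phrased via the semistability inequality of Remark~\ref{remy} rather than the explicit Hilbert--Mumford index; your introduction of the actual open-cell positions $B_j\le A_j$ and the monotonicity step $\sum_{a\in B_j}\killing(\mu_j)^{(a)}\ge\sum_{a\in A_j}\killing(\mu_j)^{(a)}$ makes explicit a passage the paper leaves implicit.
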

\begin{proof}
Let $s\in H^0(\Fl(n)^s,N)^{\SLL(n)}$. We need to show that $s$ vanishes on any point $(F_{\bull}(1)\dots,F_{\bull}(s))\in D$. Pick $V$ is in intersection \eqref{picky}.

Let $\killing(\mu_i)=(y_i^{(1)},\dots,y_i^{(n)})$, $i=1,\dots,s$. The semistability inequality corresponding to $V$ necessarily fails (see \eqref{ssinequality}, and Remark \ref{remy} below), since (using Assumption (2) above)
$$\sum_{b\in (I_{j_0}-\{a_0\})\cup \{a_{0}-1\}}y_{j_0}^{(b)} > \sum_{b\in I_{j_0}} y_{j_0}^{(b)}$$
Invariant sections vanish at non-semistable points (this is the definition of semistability), and the desired statement follows.
\end{proof}
\subsection{Conclusion of proof of Theorem 1.15}
If $(y_1,\dots,y_s)\in\mfq-\mfqtwo$, after scaling we can assume that we have dominant integral weights $\mu_1,\dots,\mu_s$ for $\SLL(n)$ such that $(\killing(\mu_1),\dots,\killing(\mu_s))=(y_1,\dots,y_s)$. We can also assume that defining $N=L_{\mu_1}\tensor \dots\tensor L_{\mu_s}$,  $H^0(\Fl(n)^s,N)^{\SLL(n)}\neq 0$. Pick an $(j_0,a_0)$ such that
$\mu_{j_0}^{(a_0)}-\mu_{j_0}^{(a_0-1)}\neq 0$ (and $a_0\in I_{j_0}$, $a_0>1$, and $a_0-1\not\in I_{j_0}$).  Applying the above proposition we can write $(y_1,\dots,y_s)$ as the sum of two points in $\mfq$ (one a basic extremal ray), and repeat this procedure to conclude the proof of Theorem \ref{two}.
\begin{remark}\label{remy}
A point of $(F_{\bull}(1),\dots,,F_{\bull}(s))\in \Fl(n)^s$ gives a filtered vector space (see \cite{FBulletin}) structure on $\Bbb{C}^n$. We call this filtered vector space semistable
for the weights $(x_1,\dots,x_n)$ if every subspace $V\subset \Bbb{C}^n$ has the following property: If we determine subsets $I_1, I_2, \dots, I_s$ of $[n]$ each of cardinality
$r=\dim V$ such that
$V\in \bigcap_{i=1}^s\Omega^0_{I_i}(F_{\bull}(i))$ then
\begin{equation}\label{ssinequality}
\sum_{i=1}^s\sum_{b\in I_i} x_i^{(b)}\leq 0.
\end{equation}
Geometric invariant theory (see e.g., \cite{FBulletin}) shows that  if $x_i=\killing(\lambda_i)$, as above then $$x=(F_{\bull}(1),\dots,F_{\bull}(s))\in \Fl(n)^s$$ is semistable for the weights $x_1,\dots,x_n$
if and only if there is a $m$ and a section $s\in H^0(\Fl(n)^s,L)^{\SLL(n)}$ which does not vanish at $x$, with
$L=L_{\lambda_1}\tensor \dots\tensor L_{\lambda_s}$
\end{remark}
\subsection{Cusps and Vanishing}
The following is an immediate consequence of the meaning of semistability (see e.g., Proposition 10 in \cite{BK}):
\begin{lemma}
Let $(\lambda_1,\dots,\lambda_s)$ be a $s$-tuple of dominant integral weights such that setting $x_i=\killing(\lambda_i)$ for $i=1,\dots,s$,
\begin{enumerate}
\item[(1)] $(x_1,\dots,x_s)\in \Gamma_n(s)$.
\item $(x_1,\dots,x_s)$ is not on the facet $\mf$ of  $\Gamma_n(s)$ given by $(r,n,I_1,\dots,I_s)$.
\end{enumerate}
Then any section of $H^0(\Fl(n)^s,L_{\lambda_1}\tensor\dots \tensor L_{\lambda_s})^{\SLL(n)}$ vanishes on the image of the map $i'$ in the diagram \ref{basicdiagram}.
\end{lemma}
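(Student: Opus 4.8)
The plan is to show that every point of $\Fl(n)^s$ in the image of $i'$ is not semistable for the action of $\SLL(n)$ linearized by $L=L_{\lambda_1}\tensor\dots\tensor L_{\lambda_s}$, and then to invoke the geometric invariant theory fact recalled in Remark~\ref{remy} (cf.\ \cite[Proposition 10]{BK}): every $\SLL(n)$-invariant section of $L$ vanishes at every non-semistable point.

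First I would unwind the map $i'=\pi\circ i$ of \eqref{basicdiagram}. By the construction of the direct sum map $i:\ms\to Y^0/\SLL(n)$, a point in the image of $i'$ has a representative $x=(F_{\bull}(1),\dots,F_{\bull}(s))\in\Fl(n)^s$ equipped with a direct sum decomposition $\Bbb{C}^n=V\oplus Q$, $\dim V=r$, such that $F_b(i)=F'_m(i)\oplus F''_{b-m}(i)$ with $m=|I_i\cap\{1,\dots,b\}|$. In particular $V$ lies in $\bigcap_{i=1}^s\Omega^0_{I_i}(F_{\bull}(i))$, and since $\dim Q\cap F_b(i)=|\{1,\dots,b\}\setminus I_i|$, the subspace $Q$ lies in $\bigcap_{i=1}^s\Omega^0_{[n]\setminus I_i}(F_{\bull}(i))$.

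Next I would apply the semistability criterion of Remark~\ref{remy} to the two subspaces $V$ and $Q$. The inequality attached to $V$ is $\sum_{i=1}^s\sum_{b\in I_i}x_i^{(b)}\leq 0$, and because each $x_i=\killing(\lambda_i)\in\frh_{+,n}$ satisfies $\sum_{b=1}^n x_i^{(b)}=0$, the inequality attached to $Q$ reads $\sum_{i=1}^s\sum_{b\in[n]\setminus I_i}x_i^{(b)}=-\sum_{i=1}^s\sum_{b\in I_i}x_i^{(b)}\leq 0$. Hypothesis (1) together with Theorem~\ref{KlBe1}, applied to the tuple $(r,n,I_1,\dots,I_s)$ which satisfies \eqref{dagger}, yields $\sum_{i}\sum_{b\in I_i}x_i^{(b)}\leq 0$; hypothesis (2) excludes equality, so $\sum_{i}\sum_{b\in I_i}x_i^{(b)}<0$. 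Hence the inequality attached to $Q$ is strictly violated, so $x$ is not semistable, and every invariant section of $L$ (indeed of every positive tensor power of $L$) vanishes at $x$. Letting $x$ range over the image of $i'$ gives the lemma.

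I do not expect any real obstacle here; this is consistent with the sentence preceding the statement, which already asserts the result is immediate from the meaning of semistability. The only point requiring care is the bookkeeping identifying, at a direct sum point, the Schubert data of $V$ and of $Q$ with the sets $I_i$ and $[n]\setminus I_i$, together with the trivial identity $\sum_{b\in[n]\setminus I_i}x_i^{(b)}=-\sum_{b\in I_i}x_i^{(b)}$. An equivalent route, recasting this on the level of weights, is to use the one-parameter subgroup $\phi_t\in\SLL(n)$ of Step (b) in the proof of Theorem~\ref{may11}, which scales $V$ by $t^{n-r}$ and $Q$ by $t^{-r}$ and therefore fixes the direct sum point $x$; a short weight count (the parenthetical remark following the proof of Theorem~\ref{one}) shows $\phi_t$ acts on the fibre $L_x$ with weight $-n\sum_{i}\sum_{b\in I_i}x_i^{(b)}$, which is strictly positive under the hypotheses, so an invariant section $s$ satisfies $s(x)=\phi_t\cdot s(x)$ with nontrivial weight and hence $s(x)=0$.
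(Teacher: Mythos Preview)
Your argument is correct and is precisely the expansion the paper intends: the paper's proof is the single sentence ``immediate consequence of the meaning of semistability (see e.g., \cite[Proposition 10]{BK})'', and you have unpacked this by exhibiting the destabilizing subspace $Q$ (equivalently, the one-parameter subgroup $\phi_t$) at a direct sum point. Both the Schubert bookkeeping for $Q$ and the weight computation $-n\sum_i\sum_{b\in I_i}x_i^{(b)}$ are accurate and match the parenthetical remark at the end of Section~\ref{lastpara}.
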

We may call the images of the map $i'$ in \eqref{basicdiagram} over all choices of $(r,n,I_1,\dots,I_s)$ satisfying \eqref{dagger}, the cusps of $\Fl(n)^s$. Therefore,

 \begin{lemma}\label{cusplemma}
 Suppose  $(\lambda_1,\dots,\lambda_s)$ is such that
$(\killing(\lambda_1),\dots,\killing(\lambda_s))$ is in  $\Gamma_n(s)$ but not on any regular facet. Then,
 then any invariant  section in $H^0(\Fl(n)^s,L_{\lambda_1}\tensor\dots \tensor L_{\lambda_s})^{\SLL(n)}$ vanishes
 at all cusps of $\Fl(n)^s$.
\end{lemma}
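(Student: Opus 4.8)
The plan is to deduce this directly from the preceding lemma by a covering argument: the hypothesis says $(\killing(\lambda_1),\dots,\killing(\lambda_s))$ lies on no regular facet of $\Gamma_n(s)$, and I want to conclude vanishing at \emph{all} cusps, i.e., on the image of $i'$ for \emph{every} tuple $(r,n,I_1,\dots,I_s)$ satisfying \eqref{dagger}. First I would fix an arbitrary such tuple and let $\mf$ be the corresponding regular facet produced by \cite{KTW} via equality in \eqref{evineq}. By hypothesis $(\killing(\lambda_1),\dots,\killing(\lambda_s))$ is not on $\mf$, so condition (2) of the preceding lemma is satisfied; condition (1) is immediate since the point is in $\Gamma_n(s)$. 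Hence the preceding lemma applies and every invariant section of $H^0(\Fl(n)^s, L_{\lambda_1}\tensor\dots\tensor L_{\lambda_s})^{\SLL(n)}$ vanishes on the image of the map $i'$ attached to this particular tuple. Since the tuple was arbitrary among those satisfying \eqref{dagger}, and the cusps of $\Fl(n)^s$ are by definition the union of these images, the section vanishes at all cusps. That is the whole argument.

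The one point that needs a word of care is that the preceding lemma as stated invokes only that the point is ``not on the facet $\mf$ given by $(r,n,I_1,\dots,I_s)$,'' whereas our hypothesis is the a priori stronger statement that the point is on no \emph{regular} facet at all. So I would simply note that the facet $\mf$ produced from a tuple satisfying \eqref{dagger} is a regular facet (this is exactly the content of \cite{KTW}, recalled in the Introduction: equality in the intersection-number-one Klyachko inequalities \eqref{evineq} cuts out the regular facets), and therefore ``not on any regular facet'' implies in particular ``not on $\mf$.'' No genuine obstacle arises; this is a packaging statement that organizes the per-facet vanishing of the previous lemma into a single assertion about the cuspidal locus, and the proof is the two sentences above applied uniformly over all $(r,n,I_1,\dots,I_s)$ with $\sigma_{I_1}\cdots\sigma_{I_s}=[\operatorname{pt}]$.
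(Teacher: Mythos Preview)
Your proposal is correct and matches the paper's approach exactly: the paper presents Lemma~\ref{cusplemma} as an immediate consequence (``Therefore'') of the preceding lemma together with the definition of cusps as the union of the images of $i'$ over all tuples $(r,n,I_1,\dots,I_s)$ satisfying \eqref{dagger}. Your only addition is the explicit observation that each such $\mf$ is a regular facet by \cite{KTW}, which is precisely what is needed to pass from ``not on any regular facet'' to condition~(2) of the preceding lemma.
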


\section{Induction operations}\label{rumble}
Fix $(r,n,I_1,\dots,I_n)$ satisfying satisfying \eqref{dagger}. This gives rise to a facet $\mf_{\Bbb{Q}}$ of $\gammanq$ given by equality in inequality \eqref{evineq}.
\begin{theorem}\label{egregium}
We have a surjection of cones with a section (as in \eqref{process1})
\begin{equation}\label{sibelius}
\operatorname{Ind}: \Gamma_{r,\Bbb{Q}}\times \Gamma_{n-r,\Bbb{Q}}\twoheadrightarrow \mf_{2,\Bbb{Q}}\ \subseteq \mf_{\Bbb{Q}}
\end{equation}
obtained as the composition of the following maps (the maps and  terms that appear here are defined below in Section \ref{bellow})
\begin{equation}\label{sibelius2}
\Gamma_{r,\Bbb{Q}}\times \Gamma_{n-r,\Bbb{Q}}\leto{\sim}\Pic^{+}_{\Bbb{Q}}(\mathcal{\ms})\twoheadrightarrow\Pic^{+}_{\Bbb{Q}}(\mathcal{\ms}-R_{\ms}) \letof{\sim,\operatorname{Ind}}\Pic^{+,\deg=0}_{\Bbb{Q}}(\mathcal{B})\leto{\sim}\mfqtwo
\end{equation}
An explicit formula for the composite is given in Theorem \ref{dirac1}.
 \end{theorem}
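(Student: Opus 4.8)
The plan is to establish Theorem \ref{egregium} by unwinding the chain of maps in \eqref{sibelius2} one arrow at a time, matching each to a geometric fact already set up in the paper. First I would address the leftmost isomorphism $\Gamma_{r,\Bbb{Q}}\times \Gamma_{n-r,\Bbb{Q}}\xrightarrow{\sim}\Pic^{+}_{\Bbb{Q}}(\ms)$: the stack $\ms=\ms(r,n-r)$ of Definition \ref{defiSS} receives a map from $\Fl(r)^s/\SLL(r)\times \Fl(n-r)^s/\SLL(n-r)$, and since $\ms$ is a product of two flag-variety stacks up to the determinant condition, its rational Picard group is $\Pic_{\Bbb{Q}}(\Fl(r)^s)\oplus\Pic_{\Bbb{Q}}(\Fl(n-r)^s)$, and the effective cone of line bundles with nonzero $\SLL(r)\times\SLL(n-r)$-invariant sections is exactly $\Gamma_{r,\Bbb{Q}}(s)\times\Gamma_{n-r,\Bbb{Q}}(s)$ by Proposition \ref{wellknown1} applied to each factor. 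Next, the surjection $\Pic^{+}_{\Bbb{Q}}(\ms)\twoheadrightarrow\Pic^{+}_{\Bbb{Q}}(\ms-R_{\ms})$ is restriction of line bundles to the open complement of the determinantal divisor $R_{\ms}$; surjectivity on effective cones is clear, and the key input (already proved in Step (a) of Section \ref{may11proof}) is that $h^0(\ms,\mathcal{O}(mR_{\ms}))=1$ for all $m>0$, so the kernel of the restriction map on the relevant lattice is precisely the ray spanned by $\mathcal{O}(R_{\ms})$.

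Then I would treat the central isomorphism $\Pic^{+}_{\Bbb{Q}}(\ms-R_{\ms})\xleftarrow{\sim,\operatorname{Ind}}\Pic^{+,\deg=0}_{\Bbb{Q}}(\mathcal{B})$. Here I would take $\mathcal{B}$ to be $Y^0/\SLL(n)$ (or its relevant open locus), and use the basic diagram \eqref{basicdiagram}: the maps $p$ and $i$ with $i\circ p=\operatorname{id}_{\ms}$, together with the fact that $R^0$ is the pullback $p^*R_{\ms}$, give that $p$ (and $i$) induce inverse isomorphisms on Picard groups once one restricts away from the ramification loci. The degree-zero condition records that we are looking at line bundles trivial on the fibers of $\pi$; birationality of $\pi:Y\to\Fl(n)^s$ (Proposition \ref{DaDivisor}) is what makes the degree-zero, effective sub-cone match up. Finally the rightmost isomorphism $\Pic^{+,\deg=0}_{\Bbb{Q}}(\mathcal{B})\xrightarrow{\sim}\mfqtwo$ comes from pushing forward along $\pi$: a degree-zero effective class on $Y^0/\SLL(n)$ descends, via $\pi_*$, to an effective class on $\Fl(n)^s/\SLL(n)$ that lies on $\mf$ because its invariant section vanishes on the cusp $i'(\ms)$ to the right order (the GIT/Mumford-index argument already run at the end of Section \ref{lastpara}), and lies on $\mf_2$ because the type I coordinates $\lambda_{j}^{(b-1)}-\lambda_{j}^{(b)}$ are killed by the degree-zero normalization. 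Surjectivity onto $\mfqtwo$ is exactly Theorem \ref{two} combined with the observation that type II rays carry no $\mathcal{O}(D(\dots))$ contribution.

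Having identified each arrow, the explicit formula \eqref{sumq1} of Theorem \ref{dirac1} would then be extracted by tracking a line bundle $L_{\mu_1}\boxtimes\cdots$ on $\Fl(r)^s\times\Fl(n-r)^s$ through the composite: the ``naive induction'' $(y_1,\dots,y_s)$ is the image of the pullback under the tautological $\Fl(r)^s\times\Fl(n-r)^s\to\ms\to Y^0/\SLL(n)\to\Fl(n)^s$ (i.e. reindexing weight coordinates by $w_{I_i}$), and the correction terms $\sum(y_i^{(b)}-y_i^{(b-1)})[D(\dots)]$ are precisely the multiples of the basic divisor classes needed to undo the failure of $\pi^*$ to be $\pi_*$-inverse on non-$\deg 0$ classes — this is where Proposition \ref{egreggy} (the cycle-class computation of Section \ref{may12}) does the bookkeeping. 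I expect the main obstacle to be the second item of Problem \ref{mainp2} hidden inside the claim, namely proving that the composite is genuinely \emph{surjective} onto all of $\mfqtwo$ (not just that its image is a subcone): this requires knowing that every type II point of $\mf$ lifts to a semistable point of $Y^0$ over which the direct-sum degeneration $i(p(x))=\lim_{t\to0}\phi_t(x)$ realizes it as induced from a genuine solution of the pair of smaller eigenvalue problems on $\Bbb{C}^r$ and $\Bbb{C}^{n-r}$ — essentially the content of Lemma \ref{faceto}(2) upgraded from rays to the whole face, together with the fact (from \cite{belkaleIMRN}) that the generic point of each component of $R_{\ms}$ matches the ramification geometry exactly, so no effective class is lost in passing to $\ms-R_{\ms}$.
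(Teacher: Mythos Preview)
Your proposal has a genuine gap: you misidentify the stack $\mathcal{B}$. In the paper, $\mathcal{B}=(\Fl(I_1)\times\dots\times\Fl(I_s))/\SLL(n)$ is built from the \emph{partial} flag varieties $\Fl(I_i)$ of Definition \ref{bee}, not from $Y^0/\SLL(n)$ or from $\Fl(n)^s/\SLL(n)$. This choice is the whole point of the construction. With partial flags, line bundles on $\Fl(I_i)$ pull back to $L_\lambda$ on $\Fl(n)$ with $\lambda^{(b)}=\lambda^{(b-1)}$ whenever $b\in I_i$, $b-1\notin I_i$ (Remark \ref{forcing}), so $\Pic(\mathcal{B})$ already encodes the type II condition and one gets $\Pic^{+,\deg=0}_{\Bbb{Q}}(\mathcal{B})\cong\mfqtwo$ tautologically (Proposition \ref{secondthing}). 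More importantly, for the partial flag target $Z=\Fl(I_1)\times\dots\times\Fl(I_s)$ the map $\pi:\yosim-\aresim\to Z$ is an open immersion whose complement has codimension $\geq 2$ (Lemma \ref{yomama}); this is what makes $\Pic(\yosim-\aresim)=\Pic(Z)$ and hence gives the isomorphism $\operatorname{Ind}:\Pic(\ms-R_{\ms})\to\Pic(\mathcal{B})$. If instead you work over $\Fl(n)^s$ as you propose, the complement of $\pi(Y^0-R^0)$ contains the codimension-one basic divisors $D(A_1,\dots,A_s)$, so no such Picard-group identification holds and the induction map is not well-defined there. (The paper explicitly contrasts this with Ressayre's approach in Section \ref{compare}: over full flags one must propagate sections with uncontrolled poles along these very divisors.)

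Two further points. First, your description of the ``$\deg=0$'' condition is off: it is not a normalization that kills type I coordinates, but rather triviality of the $\Bbb{C}^*$-action of Definition \ref{longy}, which on $\mathcal{B}$ translates (via the Mumford index) to equality in \eqref{evineq}; the type II condition comes instead from the partial flag structure of $\mathcal{B}$. Second, your plan for surjectivity onto $\mfqtwo$ (lifting type II points to semistable points of $Y^0$ via Lemma \ref{faceto}) is not how the paper proceeds; surjectivity is obtained from the section-level isomorphism $H^0(\ms-R_{\ms},\ml)\cong H^0(\mathcal{B},\operatorname{Ind}(\ml))$ of Theorem \ref{shosty}(2), proved by the degeneration $\lim_{t\to 0}\phi_t(x)=i(p(x))$ combined with Lemma \ref{schemy}. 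The explicit formula \eqref{sumq1} then comes from the order-of-vanishing computation in Section \ref{diracsection} and Lemma \ref{water}, again carried out on the partial flag varieties.
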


\subsection{A description of terms that appear in equation (5.2), and a overall sketch of proof}\label{bellow}
The map \eqref{sibelius} comes about by putting together several identifications and a geometric induction operation (using Definitions \ref{biggy} and \ref{longy}):
\begin{enumerate}
\item $\Gamma_{r,\Bbb{Q}}\times \Gamma_{n-r,\Bbb{Q}}$ is identified with $\Pic^{+}_{\Bbb{Q}}(\mathcal{\ms})$, the group of line bundles on $\ms$ (tensored with $\Bbb{Q}$) such that
some power has a non-zero global section, see Definitions \ref{defiSS} and \ref{biggy} and Proposition \ref{stbe}.
\item We consider some partial flag varieties $\Fl(I_1),\dots, \Fl(I_s)$, and define a stack $\mathcal{B}=(\Fl(I_1)\times\Fl(I_2)\dots\times \Fl(I_s))/\SL(n)$ in Definition \ref{bee}.
We show that  $\mfqtwo$ can be identified with the intersection of
$\Pic^+_{\Bbb{Q}}(\mathcal{B})$ with a hyperplane  $\Pic^{\deg =0}_{\Bbb{Q}}(\mathcal{B})\subseteq\Pic_{\Bbb{Q}}(\mathcal{B})$. This hyperplane is given by $(\lambda_1, \dots,,\lambda_s)$ such that $(\killing(\lambda_1), \dots \killing(\lambda_s))$  satisfies equality in  inequality \eqref{evineq}.  We denote this
intersection by $\Pic^{+,\deg=0}_{\Bbb{Q}}(\mathcal{B})$ which is therefore identified with $\mfqtwo$ (see Proposition \ref{secondthing}).  The relevance of partial flag varieties to type II rays of $\mfq$  is pointed out in Remark \ref{forcing}.
\item We construct a basic geometric induction operation
$$\operatorname{Ind}: \operatorname{Pic}(\mathcal{S}-R_{\mathcal{S}})\to \Pic(\mathcal{B})$$
and show that it  gives an isomorphism between $\Bbb{Q}$-vector spaces
\begin{equation}\label{morning}
\Pic^{\deg=0}_{\Bbb{Q}}(\ms-R_{\ms})\leto{\sim}\Pic^{\deg =0}_{\Bbb{Q}}(\mathcal{B})
\end{equation}
where the former is the group of line bundles on which, multiplication by scalars ($t^{n-r}$ on $V$ and $t^{-r}$ on $Q$, $t\in \Bbb{C}$) acts trivially. The isomorphism \eqref{morning}  will be shown (Theorem \ref{shosty} (3)) to induce an linear cone bijection:
$$\Pic^{+}_{\Bbb{Q}}(\mathcal{\ms}-R_{\ms})\leto{\sim}\Pic^{+,\deg=0}_{\Bbb{Q}}(\mathcal{B})$$
\item Finally, we show $\Pic^{+}_{\Bbb{Q}}(\mathcal{\ms})$ surjects onto $\Pic^{+}_{\Bbb{Q}}(\mathcal{\ms}-R_{\ms})$ with a section (Proposition \ref{bloodtest}).
\end{enumerate}

\section{Picard groups}

\begin{defi}\label{biggy}
Let $\mathcal{X}$ be an Artin stack (e.g., $\ms$ or $\ms-R_{\ms}$).
Let $\Pic^{+}(\mathcal{X})$ be the semigroup of all line bundles on $\mathcal{X}$ which have non-zero global sections,  $\Pic_{\Bbb{Q}}(\mathcal{X})= \Pic(\mathcal{X})\tensor \Bbb{Q}$, and $\Pic^{+}_{\Bbb{Q}}(\mathcal{X})\subset  \Pic_{\Bbb{Q}}(\mathcal{X})$ be the rational effective cone of all $\Bbb{Q}$ rational line bundles on $\mathcal{X}$ such that some tensor power has a  non-zero section.

\end{defi}

\begin{defi}\label{longy}
$\Bbb{C}^*$ acts on every  point of $\mathcal{S}$ as follows: $t\in\Bbb{C}^*$ acts on $V$ by multiplication by $t^{n-r}$ and on $Q$ by
$t^{-r}$.  Therefore $\Bbb{C}^*$ acts on the fibers of any line bundle
on $\ms$ (or on $\ms-R_{\ms}$).  Let $\Pic^{\deg =0}(\ms)$
and $\Pic^{deg=0}(\ms-R_{\ms})$ denote the group of line bundles where this action of $\Bbb{C}^*$ is trivial. It is clear that $\Pic^{deg=0}(\ms-R_{\ms})$ contains $\Pic^{+}(\ms-R_{\ms})$.
\end{defi}

\begin{defi}\label{line}
Given $\lambda=(\lambda^{(1)},\dots,\lambda^{(n)})\in\Bbb{Z}^n$, we get a $\GL(n)$ equivariant line bundle $L_{\lambda}$ on $\Fl(n)$ whose fiber at a point $F_{\bull}$ (with $\lambda^{(n+1)}=0$)
is $$\bigotimes_{a=1}^{n} (\det(F_a)^{\lambda^{(a)}-\lambda^{(a+1)}})^*=\bigotimes_{a=1}^{n} ((F_a/F_{a-1})^{\lambda^{(a)}})^*$$
The space of sections $H^0(\Fl(n),L_{\lambda})$ equals $V^*_{\lambda}$ as a representation of $\operatorname{GL}(n)$ if
$\lambda$ is dominant (i.e., $\lambda^{(i)}$ are weakly decreasing), and zero otherwise.
\end{defi}
\subsection{Picard group of $\Fl(n)^s/\SLL(n)$}\label{repeat}
Let $\mathcal{A}=\Fl(n)^s/\SLL(n)$. The Picard group of $\mathcal{A}$ is the set of line bundles on $\Fl(n)^s$
together with a (diagonal) $\SLL(n)$ linearization. But $\Pic(\Fl(n)^s)=\Pic(\Fl(n))^s$, and every line bundle on $\Fl(n)$ has a canonical $\SLL(n)$ linearization. There is also a  unique $\SLL(n)$ linearization on any line bundle on $\Fl(n)^s$. Therefore the Picard group of $\mathcal{A}$ is the set of $s$-tuples $(\lambda_1,\dots,\lambda_s)$ of dominant fundamental weights of $\SLL(n)$.
\subsection{Picard group of $\mathcal{S}$}\label{lastthing}
Recall the definition of $\mathcal{S}$ from Definition \ref{defiSS}. Fix vector spaces $V$ and $Q$ of dimensions $r$ and $n-r$ respectively.
 Let
\begin{equation}\label{gruppe}
H=\{(A,B)\mid A\in \operatorname{GL}(V), B\in \operatorname{GL}(Q), \det A\det B=1\}
\end{equation}
 There is a natural map $\Fl(V)^s\times \Fl(Q)^s\to\mathcal{S}$ making $\mathcal{S}$ the stack quotient  $\Fl(V)^s\times \Fl(Q)^s/H$. Therefore line bundles
 on $\mathcal{S}$ are line bundles on $\Fl(V)^s\times \Fl(Q)^s$ with a $H$-linearization.

 Let $L$ be a line bundle on $\mathcal{S}$. Let the line bundle on $\Fl(V)^s\times \Fl(Q)^s$ be written as, after choosing an arbitrary lifting as a $\GL(V)^s\times \GL(Q)^s$ line bundle
 \begin{equation}\label{ams}
 L=L_{\vec{\lambda}}\boxtimes L_{\vec{\mu}}=(L_{\lambda_1}\boxtimes L_{\lambda_2}\boxtimes\dots\boxtimes L_{\lambda_s})\boxtimes (L_{\mu_1}\boxtimes  L_{\mu_2} \boxtimes \dots\boxtimes L_{\mu_s})
 \end{equation}
Therefore one gets a $H$-linearization on $L$ which differs from the pull back $H$-linearization by a character $\chi:H\to\Bbb{C}^*$. We can extend this character to $\GL(V)\times \GL(Q)$, as follows. Let $(A,B)\in \GL(V)\times \GL(Q)$. Set $\alpha=\det A\ \det B$ and let $\beta^r=\alpha$, then define
$$\chi'(A,B)= \chi(\beta^{-1}A,B).$$
 Any two choices for $\beta$ result in the same value of $\chi'(A,B)$ because $\chi$ is identity on $\SLL(V)\times\SLL(Q)$. Now characters of $\GL(V)\times \GL(Q)$ are products of determinants. Therefore we can replace $\lambda_1$ by $\lambda_1 + c(1,1,1\dots,1)$ and assume that $\chi$ extends to $\GL(V)\times \GL(W)$,  We therefore arrive at the following description of the Picard group of $\mathcal{S}$:
 \begin{lemma}\label{office}
 Let $L$ be a line bundle on $\Fl(V)^s\times \Fl(Q)^s$ with a $\GL(V)^s\times \GL(Q)^s$ equivariant structure given by \eqref{ams}. This induces a $H$ linearization and hence gives a line bundle on $\mathcal{S}$. Furthermore,
 \begin{enumerate}
 \item All line bundles on $\mathcal{S}$ arise this way.
 \item Data $(\vec{\lambda}(i),\vec{\mu}(i))$, $i=1,2$ give the same line bundle on the stack $\mathcal{S}$ if these are equal as representations of $\SL(V)^s\times \SL(Q)^s$ and
 $w(\vec{\lambda}(1),\vec{\mu}(1))=w(\vec{\lambda}(2),\vec{\mu}(2))$ where
 \begin{equation}\label{expresso}
  w(\vec{\lambda},\vec{\mu}) =(n-r)(\sum_{i=1}^s|\lambda_i|)-r(\sum_{i=1}^s|\mu_i|)
 \end{equation}

 \item A line bundle \eqref{ams} as above does not have any non-zero global sections on $\mathcal{S}$ unless the quantity \eqref{expresso} is zero,
     in which case the space of sections coincides with
    \begin{equation} \label{invariaants}
     H^0(\Fl(V)^s,L_{\lambda}\boxtimes L_{\lambda_2}\boxtimes\dots\boxtimes L_{\lambda_s})^{\SLL(V)}\tensor H^0(\Fl(Q)^s,L_{\mu_1}\boxtimes L_{\mu_2}\boxtimes\dots\boxtimes L_{\mu_s})^{\SLL(Q)}
     \end{equation}
 \end{enumerate}
\end{lemma}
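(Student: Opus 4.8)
The plan is to reduce everything to equivariant geometry on $\Fl(V)^s\times\Fl(Q)^s$. As recorded in the discussion preceding the lemma, $\ms$ is the quotient stack $(\Fl(V)^s\times\Fl(Q)^s)/H$ with $H$ as in \eqref{gruppe}, so a line bundle on $\ms$ is exactly an $H$-equivariant line bundle on $\Fl(V)^s\times\Fl(Q)^s$, and its space of global sections on $\ms$ is the space of $H$-invariant global sections. Since $\Pic(\Fl(V)^s\times\Fl(Q)^s)=\Pic(\Fl(V))^s\times\Pic(\Fl(Q))^s$ and the Picard group of a flag variety is generated by the $L_\lambda$ of Definition \ref{line}, the underlying line bundle of any line bundle on $\ms$ is $L_{\vec{\lambda}}\boxtimes L_{\vec{\mu}}$ as in \eqref{ams} for some integral weights. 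With this, the three assertions become, respectively: (1) every $H$-equivariant structure on such a bundle is, after replacing $\lambda_1$ by $\lambda_1+c(1,\dots,1)$ for a suitable $c$, the restriction of the canonical $\GL(V)^s\times\GL(Q)^s$-linearization; (2) a criterion for when two such equivariant bundles are $H$-equivariantly isomorphic; (3) a computation of the $H$-invariant global sections.

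For (1), the canonical $\GL(V)^s\times\GL(Q)^s$-equivariant structure on $L_{\vec{\lambda}}\boxtimes L_{\vec{\mu}}$ restricts to an $H$-equivariant structure, and any two $H$-equivariant structures on a fixed line bundle differ by a character $\chi\colon H\to\Bbb{C}^*$. As the preceding discussion shows, $H$ is connected with commutator subgroup $\SLL(V)\times\SLL(Q)$ and $H/(\SLL(V)\times\SLL(Q))\cong\Bbb{C}^*$ via $(A,B)\mapsto\det A$, so every character of $H$ is a power of $\det A$ and extends to $\GL(V)\times\GL(Q)$. Since $L_{\lambda_1+c(1,\dots,1)}\cong L_{\lambda_1}\otimes(\det V)^{-c}$ as $\GL(V)$-equivariant bundles, twisting the canonical structure by a power of $\det A$ is the same as an integral shift of $\lambda_1$; hence $\chi$ can be absorbed into the datum $\vec{\lambda}$, which gives (1).

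For (2), two line bundles on $\ms$ attached to data $(\vec{\lambda}(i),\vec{\mu}(i))$, $i=1,2$, are isomorphic iff their underlying bundles on $\Fl(V)^s\times\Fl(Q)^s$ agree — that is, iff $\lambda_j(1)-\lambda_j(2)$ and $\mu_j(1)-\mu_j(2)$ are multiples of $(1,\dots,1)$ for every $j$, i.e. the data coincide as $\SLL(V)^s\times\SLL(Q)^s$-representations — and, given that, iff the two $H$-equivariant structures coincide, i.e. differ by the trivial character (two linearizations of a fixed bundle on a connected proper variety differ by a scalar, which is irrelevant, plus a character of $H$). Writing $\lambda_j(1)=\lambda_j(2)+e_j(1,\dots,1)$ and $\mu_j(1)=\mu_j(2)+f_j(1,\dots,1)$, the difference of the underlying $\GL(V)^s\times\GL(Q)^s$-structures is the character $\prod_j(\det A_j)^{-e_j}\prod_j(\det B_j)^{-f_j}$, whose restriction to $H$ (where every $A_j$ equals a common $A$, every $B_j$ a common $B$, and $\det B=(\det A)^{-1}$) is $(\det A)^{-\sum_j e_j+\sum_j f_j}$. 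This is trivial iff $\sum_j e_j=\sum_j f_j$; using $re_j=|\lambda_j(1)|-|\lambda_j(2)|$ and $(n-r)f_j=|\mu_j(1)|-|\mu_j(2)|$, this is precisely $w(\vec{\lambda}(1),\vec{\mu}(1))=w(\vec{\lambda}(2),\vec{\mu}(2))$ for $w$ as in \eqref{expresso}, giving (2).

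For (3), Künneth for box products of projective varieties gives $H^0(\Fl(V)^s\times\Fl(Q)^s,L_{\vec{\lambda}}\boxtimes L_{\vec{\mu}})=H^0(\Fl(V)^s,L_{\vec{\lambda}})\otimes H^0(\Fl(Q)^s,L_{\vec{\mu}})$, which vanishes unless all $\lambda_j,\mu_j$ are dominant and then equals $(V^*_{\lambda_1}\otimes\dots\otimes V^*_{\lambda_s})\otimes(V^*_{\mu_1}\otimes\dots\otimes V^*_{\mu_s})$ by Definition \ref{line}. Taking $H$-invariants, the subgroup $\SLL(V)\times\SLL(Q)$ acts factorwise and its invariants form the space \eqref{invariaants}; moreover $H$ is generated by $\SLL(V)\times\SLL(Q)$ together with the cocharacter $t\mapsto(t^{n-r}I_V,t^{-r}I_Q)$ of Definition \ref{longy}, because the composite of that cocharacter with $(A,B)\mapsto\det A$ is a nonzero power map on $\Bbb{C}^*$, hence onto $H/(\SLL(V)\times\SLL(Q))$ by divisibility of $\Bbb{C}^*$. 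On the $\SLL(V)\times\SLL(Q)$-invariant part this cocharacter acts by the scalar $t^{-(n-r)\sum_j|\lambda_j|+r\sum_j|\mu_j|}=t^{-w(\vec{\lambda},\vec{\mu})}$, since the center of $\GL(V)$ (resp. $\GL(Q)$) acts on $V^*_\lambda$ by weight $-|\lambda|$. Hence the full $H$-invariants are zero unless $w(\vec{\lambda},\vec{\mu})=0$, and when $w=0$ they equal \eqref{invariaants}. I expect the only real difficulty to be the sign-and-normalization bookkeeping that makes (2) and (3) land exactly on \eqref{expresso}: tracking how a shift $\lambda_1\mapsto\lambda_1+c(1,\dots,1)$ changes the $\GL$-linearization, how it restricts to $H$, and how the center of $\GL(V)$ acts on $V^*_\lambda$. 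The structural inputs — the quotient-stack model of $\ms$, the Picard group of a product of flag varieties, the character group of $H$, and Künneth — are all standard.
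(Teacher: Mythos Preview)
Your proof is correct and follows the same approach as the paper: both use the quotient-stack description $\ms=(\Fl(V)^s\times\Fl(Q)^s)/H$, identify characters of $H$ via $H/(\SLL(V)\times\SLL(Q))\cong\Bbb{C}^*$, and compute invariants through the surjection $\Bbb{C}^*\times\SLL(V)\times\SLL(Q)\twoheadrightarrow H$. The paper's proof is essentially a three-line sketch (part (1) is the paragraph preceding the lemma, (2) is ``match the character on the central $\Bbb{C}^*$'', (3) is ``the center must act trivially on invariants''); you have unpacked exactly these steps with the explicit bookkeeping for \eqref{expresso}.
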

\begin{proof}
 We have already shown (1), the condition in (2) is that corresponding characters on  $\Bbb{C}^*\subset H$ (here $t\in\Bbb{C}^*$ acts as multiplication by  $t^{n-r}$ on $V$, and  by $t^r$ on $Q$). The condition in (3) is that the center of $H$ should act trivially if there are non-zero invariants. Here we have used the surjection $\Bbb{C}^*\times \SLL(V)\times\SLL(Q)\twoheadrightarrow H$.
\end{proof}

\begin{proposition}\label{stbe}
\begin{enumerate}
\item[(a)]$\Pic^{+}_{\Bbb{Q}}(\mathcal{S})$  is in bijection with $\Gamma_{r,\Bbb{Q}}\times \Gamma_{n-r,\Bbb{Q}}$.
\item[(b)] $\Pic^{\deg=0}_{\Bbb{Q}}(\ms)$ is in bijection with $\Pic_{\Bbb{Q}}(\Fl(r)^s\times \Fl(n-r)^s)$.
\item[(c)]  Extremal rays of $\Pic^{+}_{\Bbb{Q}}(\mathcal{S})$ correspond to extremal rays of $\Gamma_{r,\Bbb{Q}}\times \Gamma_{n-r,\Bbb{Q}}$ which are of two kinds: Extremal rays of $\Gamma_{r,\Bbb{Q}}$ (with $(0,\dots,0)$ on the second factor of $\Gamma_{n-r,\Bbb{Q}}$), or extremal rays $\Gamma_{n-r,\Bbb{Q}}$ (with $(0,\dots,0)$ on the first factor of $\Gamma_{r,\Bbb{Q}}$).
\end{enumerate}
\end{proposition}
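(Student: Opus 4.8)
The three statements are really three facets of one computation, and the plan is to read everything off Lemma \ref{office}. Recall that by that lemma a line bundle on $\ms$ is encoded by the pullback datum $(\vec\lambda,\vec\mu)$ on $\Fl(V)^s\times\Fl(Q)^s$, modulo the equivalence described there (equality as $\SL(V)^s\times\SL(Q)^s$-representations together with matching of the weight $w(\vec\lambda,\vec\mu)$ of \eqref{expresso}), and that such a bundle has nonzero sections precisely when $w(\vec\lambda,\vec\mu)=0$, in which case the sections are the tensor product \eqref{invariaants}. So the strategy is: first pin down $\Pic_{\Bbb Q}(\ms)$ as a rational vector space; then carve out the effective cone inside it using the vanishing criterion; then identify the effective cone via Proposition \ref{wellknown1} with $\Gamma_{r,\Bbb Q}\times\Gamma_{n-r,\Bbb Q}$; and finally translate ``extremal ray'' through that identification.

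\textbf{Parts (a) and (b).} For (b), I would argue that $\Pic^{\deg=0}(\ms)$ is exactly the subgroup where the central $\Bbb C^*\subset H$ (acting by $t^{n-r}$ on $V$ and $t^{-r}$ on $Q$, equivalently $t^r$ on $Q$ up to relabeling as in the proof of Lemma \ref{office}) acts trivially on fibers, which by part (2) of that lemma is the condition $w(\vec\lambda,\vec\mu)=0$. On the subspace $w=0$ the only remaining equivalence is equality as $\SL(V)^s\times\SL(Q)^s$-representations, so $\Pic^{\deg=0}_{\Bbb Q}(\ms)\xrightarrow{\sim}\Pic_{\Bbb Q}(\Fl(r)^s\times\Fl(n-r)^s)$, which is (b). (One should note $\Fl(V)\cong\Fl(r)$ etc.\ after fixing $V\cong\Bbb C^r$, $Q\cong\Bbb C^{n-r}$.) For (a): a $\Bbb Q$-line bundle $L$ lies in $\Pic^+_{\Bbb Q}(\ms)$ iff some positive power has a nonzero section, which by Lemma \ref{office}(3) forces $w=0$ and then requires the tensor product \eqref{invariaants} to be nonzero, i.e.\ both factors $H^0(\Fl(r)^s, L_{\vec\lambda})^{\SL(r)}$ and $H^0(\Fl(n-r)^s,L_{\vec\mu})^{\SL(n-r)}$ to be nonzero after scaling. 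By Proposition \ref{wellknown1} (applied to $\SL(r)$ and to $\SL(n-r)$ separately) and Remark \ref{semi}, the set of such $\vec\lambda$ (up to the Killing identification \eqref{formule1}) is exactly $\Gamma_{r,\Bbb Q}(s)$ and similarly for $\vec\mu$. So $\Pic^+_{\Bbb Q}(\ms)$, as the product of these two conditions on the $w=0$ subspace, is in bijection with $\Gamma_{r,\Bbb Q}\times\Gamma_{n-r,\Bbb Q}$; one checks this bijection is linear (i.e.\ a map of cones) because it is the restriction of the linear isomorphism of (b) composed with \eqref{formule1}.

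\textbf{Part (c).} This is now formal: $\Pic^+_{\Bbb Q}(\ms)\cong\Gamma_{r,\Bbb Q}\times\Gamma_{n-r,\Bbb Q}$ is a linear isomorphism of cones, hence it matches extremal rays with extremal rays. An extremal ray of a product of two pointed cones $C_1\times C_2$ is either an extremal ray of $C_1$ paired with the cone point of $C_2$ or vice versa — the elementary reason being that if $v=(v_1,v_2)$ with both $v_i\ne0$ then $v=(v_1,0)+(0,v_2)$ is a nontrivial decomposition into two non-proportional cone elements, and conversely an extremal ray of $C_1$ times the origin stays extremal in the product since any decomposition in $C_1\times C_2$ projects to a decomposition in $C_1$. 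Both $\Gamma_{r,\Bbb Q}$ and $\Gamma_{n-r,\Bbb Q}$ are pointed (they lie in the respective Weyl chambers, which are pointed), so the dichotomy applies and gives exactly the two families of extremal rays described in (c).

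\textbf{Main obstacle.} The routine parts are the vector-space bookkeeping and the elementary fact about extremal rays of a product cone. The one genuine point requiring care is the interface with Lemma \ref{office}: one must be consistent about the two conventions for the central torus action ($t^{-r}$ versus $t^r$ on $Q$) that already appear in Definition \ref{longy} and in the proof of Lemma \ref{office}, and one must verify that the identification of $\Pic^+_{\Bbb Q}(\ms)$ with $\Gamma_{r,\Bbb Q}\times\Gamma_{n-r,\Bbb Q}$ is genuinely \emph{linear} — i.e.\ that the equivalence relation ``add $c(1,\dots,1)$ to some $\lambda_i$'' used to make $\chi$ extend to $\GL$ does not introduce any nonlinearity — which follows because that ambiguity is exactly modded out by passing to $\SL$-representations together with fixing $w=0$. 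Beyond that, the proof is a direct unwinding of definitions together with Proposition \ref{wellknown1}.
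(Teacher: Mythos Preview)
Your proposal is correct and follows essentially the same route as the paper: both arguments unwind Lemma \ref{office} to identify $\Pic^{\deg=0}_{\Bbb Q}(\ms)$ with $\Pic_{\Bbb Q}(\Fl(r)^s\times\Fl(n-r)^s)$ via the condition $w(\vec\lambda,\vec\mu)=0$, then invoke Proposition \ref{wellknown1} (and Remark \ref{identify}) factor by factor to carve out the effective cone as $\Gamma_{r,\Bbb Q}\times\Gamma_{n-r,\Bbb Q}$. The paper's proof is terser --- it normalizes to $\sum_i|\lambda_i|=\sum_i|\mu_i|=0$ to land on $w=0$ and simply declares (c) a consequence of (a) --- whereas you spell out the elementary product-of-pointed-cones argument for (c) and flag the sign convention for the central $\Bbb C^*$; these are expository differences, not a different strategy.
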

\begin{proof} We use Remark \ref{identify} and Proposition \ref{wellknown1}.
Let $\ml\in \Pic^{+}_{\Bbb{Q}}(\mathcal{S})$, assume that $\ml$ comes from a line bundle of the form the form \eqref{ams} which satisfies  $w(\vec{\lambda},\vec{\mu})=0$. It is easy to see that this set of $\ml$ is in bijection with all line bundles on $\Fl(V)^s/\SL(V)\times \Fl(Q)^s/\SL(Q)$, since if $(\vec{\lambda}(i),\vec{\mu}(i))$, $i=1,2$. are data which give the same $s$ representations of  $\SL(V)$, and of $\SL(Q)$ and  satisfy  $w(\vec{\lambda}(i),\vec{\mu}(i))=0,\ i=1,2$, we see using Lemma \ref{office} that $(\vec{\lambda}(i),\vec{\mu}(i))$, $i=1,2$ give isomorphic line bundles on $\mathcal{S}$.

For the reverse direction, given a point of $\Gamma_{r,\Bbb{Q}}\times \Gamma_{n-r,\Bbb{Q}}$, we assume that it corresponds to data $(\vec{\lambda},\vec{\mu})$
normalize these so that (we are working rationally, so denominators are allowed),
$$\sum_i|\lambda_i|=\sum_i|\mu_i|=0$$
Lemma \ref{office} then produces the desired line bundle on $\ms$. This proves (a). Part (b) is proved in a similar fashion. Part (c) is a consequence of (a).
\end{proof}

\section{Partial flag varieties}
\begin{defi} Let $I$ be a subset of $\{1,\dots,n\}$ or cardinality $r$.
$\operatorname{Fl}(I)$ parameterizes certain partial flags $F_{\bull}$ on $\Bbb{C}^n$: The only case $F_a$ is not defined is when the following three conditions are all satisfied $a<n$, $a\not\in I$ and $a+1\in I$. Recall that a constituent of the partial flag $F_{\bull}$ is denoted by $F_a$ where $a=\dim F_a$.
\end{defi}

\begin{remark}\label{forcing}
Line bundles on $\Fl(I)$ pullback to line bundles $L_{\lambda}$ of
$\Fl(n)$ so that $\lambda^{(b)}=\lambda^{(b-1)}$ whenever $b\in I$ and $b-1\not\in I, b>1$. Therefore
 flag varieties of the type $\Fl(I)$ provide the right setting for the study of rays of type II in $\mfq$.
\end{remark}

\begin{defi}\label{bee}
In the setting of Section \ref{rumble}, define a stack
$\mathcal{B}=(\Fl(I_1)\times\Fl(I_2)\times \dots\times \Fl(I_s))/\SL(n)$.
\end{defi}
Repeating arguments from Section \ref{repeat}, we see using Remark \ref{forcing} that
\begin{lemma} \label{list}
$\Pic(\mathcal{B)}$ is the $\Bbb{Z}$ module  formed by  triples $(\lambda_1,\dots,\lambda_s)\in \Pic(\Fl(n)^s/\SLL(n))$ of dominant weights for
$\SL(n)$ such that  for all $i\in[s]$ and $b\in I_i$ such that $b>1$ and $b-1\not\in I_i$, we have $\lambda_i^{(b)}=\lambda_{i}^{(b-1)}$.
\end{lemma}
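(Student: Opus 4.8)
The plan is to construct $\Pic(\mathcal{B})$ out of the Picard groups of the partial flag varieties $\Fl(I_i)$, paralleling the treatment of $\Fl(n)^s/\SL(n)$ in Section~\ref{repeat}. Three ingredients are needed: (i) a description of $\Pic(\Fl(I))$ as an explicit sublattice of $\Pic(\Fl(n))$; (ii) a Künneth identification $\Pic(\prod_{i=1}^s \Fl(I_i)) = \bigoplus_{i=1}^s \Pic(\Fl(I_i))$; and (iii) existence and uniqueness of an $\SL(n)$-linearization on every line bundle on $\prod_{i=1}^s \Fl(I_i)$, giving $\Pic(\mathcal{B}) = \Pic^{\SL(n)}(\prod_{i=1}^s \Fl(I_i)) = \Pic(\prod_{i=1}^s \Fl(I_i))$ as abelian groups.

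For ingredient (i) I would use the forgetful morphism $\pi_I\colon \Fl(n)\to \Fl(I)$ that drops the subspaces $F_a$ with $a<n$, $a\notin I$, $a+1\in I$. No two such "dropped" indices are adjacent (if $a$ and $a+1$ were both dropped we would get $a+1\in I$ and $a+1\notin I$), so $\pi_I$ is a Zariski-locally trivial bundle whose fibers are products of projective lines, one $\mathbb{P}(F_{a+1}/F_{a-1})\cong\Bbb{P}^1$ for each dropped index $a$. In particular the fibers are connected, reduced and projective, whence $\pi_{I*}\mathcal{O}_{\Fl(n)} = \mathcal{O}_{\Fl(I)}$; the projection formula then shows $\pi_I^*$ is injective with left inverse $\pi_{I*}$ on its image, and cohomology-and-base-change shows a line bundle on $\Fl(n)$ lies in the image of $\pi_I^*$ iff its restriction to every fiber is trivial. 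On the $\Bbb{P}^1$ attached to a dropped index $a$, the line bundle $(\det F_a)^*$ restricts to $\mathcal{O}(1)$ while $(\det F_c)^*$ for $c$ not dropped restricts trivially (since $F_c$ is fixed along the fiber); hence, in the notation of Definition~\ref{line}, $L_\lambda$ descends to $\Fl(I)$ iff $\lambda^{(a)}-\lambda^{(a+1)} = 0$ for every dropped $a$, i.e. (reindexing $b = a+1$) iff $\lambda^{(b)} = \lambda^{(b-1)}$ for all $b\in I$ with $b>1$ and $b-1\notin I$. This is exactly the content of Remark~\ref{forcing}, and it identifies $\Pic(\Fl(I))$ with that sublattice of $\Pic(\Fl(n))$.

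For ingredients (ii) and (iii): each $\Fl(I_i)$ is a smooth projective rational variety with $H^0(\mathcal{O}) = \Bbb{C}$ and free Picard group, so the Künneth formula for Picard groups gives $\Pic(\prod_i \Fl(I_i)) = \bigoplus_i \Pic(\Fl(I_i))$. Since $\SL(n)$ is semisimple and simply connected it has no nontrivial characters, so — exactly as recalled in Section~\ref{repeat} for $\Fl(n)^s$ — every line bundle on the normal projective $\SL(n)$-variety $\prod_i \Fl(I_i)$ carries a unique $\SL(n)$-linearization, hence $\Pic(\mathcal{B}) = \bigoplus_i \Pic(\Fl(I_i))$. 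Feeding in (i) and using the identification of $\Pic(\Fl(n)^s/\SL(n))$ with $s$-tuples of dominant weights of $\SL(n)$ from Section~\ref{repeat}, this is precisely the submodule of triples $(\lambda_1,\dots,\lambda_s)$ with $\lambda_i^{(b)} = \lambda_i^{(b-1)}$ for all $i\in[s]$ and all $b\in I_i$ with $b>1$, $b-1\notin I_i$, which is the assertion of the lemma.

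I do not expect a serious obstacle here: the only points needing a little care are the verification that a line bundle trivial on all fibers of $\pi_I$ actually descends (routine cohomology-and-base-change, using that $h^0$ of the trivial bundle on a fiber is $1$) and the index bookkeeping translating "dropped index $a$" into "$b\in I$, $b-1\notin I$, $b>1$" with $b = a+1$. The genuinely structural inputs — Künneth and uniqueness of $\SL(n)$-linearizations — are standard and already invoked in Section~\ref{repeat}, and the sublattice description of $\Pic(\Fl(I))$ is essentially Remark~\ref{forcing}.
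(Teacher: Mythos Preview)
Your proposal is correct and follows essentially the same approach as the paper: the paper's entire proof is the one-line sentence ``Repeating arguments from Section~\ref{repeat}, we see using Remark~\ref{forcing} that\ldots'', and your three ingredients (i)--(iii) are precisely a careful unpacking of that reference. Your explicit verification that the fibers of $\pi_I$ are products of $\Bbb{P}^1$'s (because no two dropped indices are adjacent) and the cohomology-and-base-change descent argument add detail the paper omits, but the strategy is identical.
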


\begin{defi}
\begin{enumerate}
\item $\Pic^{\deg=0}(\mathcal{B})$ consists of all triples $(\lambda_1,\dots,\lambda_s)$ such that $(\killing(\lambda_1), \dots,\killing(\lambda_s))$ satisfies  equality in the inequality \eqref{evineq}.
\item $\Pic^{\deg=0,+}_{\Bbb{Q}}(\mathcal{B})=\Pic^{\deg=0}_{\Bbb{Q}}(\mathcal{B})\cap \Pic^+_{\Bbb{Q}}(\mathcal{B}).$
\end{enumerate}
\end{defi}
The following is now an easy consequence of Proposition \ref{wellknown1} and Remark \ref{forcing}.
\begin{proposition}\label{secondthing}
$\Pic^{\deg=0,+}_{\Bbb{Q}}(\mathcal{B})$ is isomorphic to the cone $\mfqtwo$ by the map that takes $(\lambda_1,\dots,\lambda_s)$ to $(\killing(\lambda_1),\dots,\killing(\lambda_s))$.
\end{proposition}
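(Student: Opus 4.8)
The plan is to deduce the statement from Proposition \ref{wellknown1} by factoring the asserted map through the effective cone of the moduli stack $\ma=\Fl(n)^s/\SLL(n)$, then cutting down first by the partial flag varieties $\Fl(I_i)$ and finally by the hyperplane $\deg=0$. First recall, as in Section \ref{repeat}, that $\Pic_{\Bbb{Q}}(\ma)=(\frh_{n,\Bbb{Q}}^*)^s$, a tuple $(\lambda_1,\dots,\lambda_s)$ of dominant $\SLL(n)$-weights corresponding to $L_{\lambda_1}\boxtimes\dots\boxtimes L_{\lambda_s}$ with its canonical diagonal linearization, and that global sections on the stack are the $\SLL(n)$-invariant sections on $\Fl(n)^s$. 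Combining Remark \ref{identify} with Proposition \ref{wellknown1}, the map $(\lambda_1,\dots,\lambda_s)\mapsto(\killing(\lambda_1),\dots,\killing(\lambda_s))$ is then a linear isomorphism carrying $\Pic^{+}_{\Bbb{Q}}(\ma)$ bijectively onto $\gammanq$.

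Next I would bring in $\mathcal{B}$. The forgetful maps $\Fl(n)\to\Fl(I_i)$ assemble to an $\SLL(n)$-equivariant morphism $\Fl(n)^s\to\Fl(I_1)\times\dots\times\Fl(I_s)$, hence to a morphism of stacks $q\colon\ma\to\mathcal{B}$. By Lemma \ref{list} and Remark \ref{forcing}, $q^*$ identifies $\Pic(\mathcal{B})$ with the submodule of $\Pic(\ma)$ cut out by the equalities $\lambda_i^{(b)}=\lambda_i^{(b-1)}$ for $b\in I_i$, $b>1$, $b-1\notin I_i$. Moreover $q$ is a fiber bundle whose fiber is a product of projective lines, one for each dropped index (note that dropped indices are never adjacent); the fibers are thus connected projective varieties with $H^0(\mathcal{O})=\Bbb{C}$, so $q_*\mathcal{O}_{\ma}=\mathcal{O}_{\mathcal{B}}$ and, by the projection formula, $H^0(\mathcal{B},M)=H^0(\ma,q^*M)$ for every line bundle $M$ on $\mathcal{B}$. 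Hence a rational line bundle on $\mathcal{B}$ lies in $\Pic^{+}_{\Bbb{Q}}(\mathcal{B})$ iff its pullback lies in $\Pic^{+}_{\Bbb{Q}}(\ma)$, so $\Pic^{+}_{\Bbb{Q}}(\mathcal{B})$ is carried bijectively onto the subcone of $\gammanq$ defined by $x_i^{(b)}=x_i^{(b-1)}$ for the above $b$; here one uses that $\killing$ shifts each $\lambda_i$ only by a scalar, so these equalities transport verbatim to $x_i=\killing(\lambda_i)$.

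Finally I would impose $\deg=0$. By definition $\Pic^{\deg=0}_{\Bbb{Q}}(\mathcal{B})$ is the locus where $(\killing(\lambda_1),\dots,\killing(\lambda_s))$ satisfies equality in \eqref{evineq}, which under the identification above is exactly the hyperplane carving the facet $\mfq$ out of $\gammanq$. Intersecting with the previous step, $\Pic^{\deg=0,+}_{\Bbb{Q}}(\mathcal{B})$ maps bijectively onto $\gammanq\cap\{\text{equality in }\eqref{evineq}\}\cap\{x_i^{(b)}=x_i^{(b-1)}\}=\mfq\cap\mf_2=\mfqtwo$ by Definition \ref{dmfq2}. All maps involved being linear, this is an isomorphism of cones, realized by $(\lambda_1,\dots,\lambda_s)\mapsto(\killing(\lambda_1),\dots,\killing(\lambda_s))$ as asserted. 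The only input that is not pure bookkeeping is the connectedness of the fibers of $q$ (so that $q^*$ preserves spaces of sections), which is standard for partial flag bundles and already built into Lemma \ref{list} and Remark \ref{forcing}; accordingly the one point I expect to need care is matching the three defining conditions of $\mfqtwo$ --- lying in $\gammanq$ (effectivity, via Proposition \ref{wellknown1}), lying on $\mfq$ (the $\deg=0$ hyperplane), and the coordinate equalities (membership in $\Pic(\mathcal{B})$) --- against the three decorations on $\Pic^{\deg=0,+}_{\Bbb{Q}}(\mathcal{B})$.
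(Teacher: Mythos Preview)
Your proof is correct and follows essentially the same approach the paper indicates: the paper states only that the proposition is ``an easy consequence of Proposition \ref{wellknown1} and Remark \ref{forcing}'', and you have spelled out precisely that consequence, matching effectivity to $\gammanq$, membership in $\Pic(\mathcal{B})$ to the coordinate equalities of Definition \ref{dmfq2}, and the $\deg=0$ condition to the hyperplane defining $\mfq$. Your extra care in verifying $H^0(\mathcal{B},M)\cong H^0(\ma,q^*M)$ via the projection formula and the $\Bbb{P}^1$-bundle structure of $q$ is a welcome elaboration of what the paper leaves implicit.
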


\subsection{Schubert varieties, and their codimension one subvarieties}\label{codimen}
\begin{lemma}\label{plum}
Let $\widetilde{F}_{\bull}\in\Fl(n)$. All codimension one Schubert subvarieties of $\Omega_I(\widetilde{F}_{\bullet})$ can be obtained as follows. Pick $b\in I, b>1$ such that $b-1\not\in I$, let $I'=(I-\{b\})\cup\{b-1\}$.
Then, $\Omega_{I'}(\widetilde{F}_{\bullet})$ is a codimension one subvariety of $\Omega_I(\widetilde{F}_{\bullet})$ and all codimension one Schubert subvarieties arise this way.
\end{lemma}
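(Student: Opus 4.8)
The plan is to work directly with the combinatorial description of Schubert varieties in $\Gr(r,n)$ and the well-known fact that the Bruhat order on Schubert classes in a Grassmannian corresponds to containment of Young diagrams (equivalently, componentwise comparison of the index sets $I$). First I would recall the dictionary: writing $I=\{i_1<\dots<i_r\}$, the codimension of $\Omega_I(\widetilde F_\bullet)$ in $\Gr(r,n)$ is $|\sigma_I|=\sum_{a=1}^r (n-r+a-i_a)$, and $\Omega_{I}(\widetilde F_\bullet)\supseteq \Omega_{J}(\widetilde F_\bullet)$ if and only if $i_a\le j_a$ for all $a$ (both flags being the same $\widetilde F_\bullet$). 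This is standard and can be cited from any reference on Schubert calculus; alternatively it follows from the defining incidence conditions \eqref{closedsv} together with genericity considerations, but here the flag is fixed so the comparison is purely order-theoretic. Under this dictionary, a codimension one Schubert subvariety of $\Omega_I(\widetilde F_\bullet)$ corresponds to an index set $J=\{j_1<\dots<j_r\}$ with $i_a\le j_a$ for all $a$ and $|\sigma_J|=|\sigma_I|+1$, i.e. $\sum_a j_a = \sum_a i_a + 1$.

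The key step is then the following elementary observation: if $i_a\le j_a$ for all $a$ and $\sum_a(j_a-i_a)=1$, then there is exactly one index $a_0$ with $j_{a_0}=i_{a_0}+1$ and $j_a=i_a$ for all $a\ne a_0$. Since the $j_a$ are strictly increasing and the $i_a$ are strictly increasing, the constraint $j_{a_0}=i_{a_0}+1$ together with $j_{a_0-1}=i_{a_0-1}$ and $j_{a_0}\le j_{a_0+1}=i_{a_0+1}$ forces $i_{a_0}+1\notin I$ (else $i_{a_0}+1=i_{a_0+1}$ would already be an element of $I$ and $J$ would fail to be a set) and $i_{a_0}+1\le n$. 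Translating back: if I set $b=i_{a_0}+1$, then $b\in J$, $b>1$ (as $b=i_{a_0}+1\ge 2$ since $i_{a_0}\ge 1$), $b-1=i_{a_0}\in I$, and $b-1\notin J$. Reindexing from the point of view of $I$: the element removed from $I$ is $b-1\in I$ where $b-1>0$... more precisely, writing the lemma's normalization, one picks $b\in I$ with $b>1$ and $b-1\notin I$, sets $I'=(I-\{b\})\cup\{b-1\}$, and checks $|\sigma_{I'}|=|\sigma_I|+1$ directly from \eqref{codimI} (decreasing one index $i_a$ from $b$ to $b-1$ increases $n-r+a-i_a$ by exactly one), so $\Omega_{I'}(\widetilde F_\bullet)$ has codimension one in $\Omega_I(\widetilde F_\bullet)$; and conversely every codimension one Schubert subvariety arises this way by the uniqueness-of-$a_0$ argument above.

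I expect the main (minor) obstacle to be bookkeeping: making sure the two descriptions — "remove $b$, add $b-1$, where $b\in I$, $b>1$, $b-1\notin I$" versus "increase $i_{a_0}$ to $i_{a_0}+1$" — are matched up consistently, including the edge conditions ($b\le n$, $b-1\ge 1$, and the non-membership conditions that guarantee $I'$ is genuinely a size-$r$ subset), and that one has correctly invoked that containment of Schubert varieties for a \emph{fixed} flag is exactly componentwise order on index sets (rather than the subtler statement for varying flags, which would involve the Richardson/intersection picture). None of this is deep; the content is the one-line observation that a size-one gap between two componentwise-comparable strictly increasing sequences of equal length occurs in a single coordinate, and that coordinate shift is precisely a box added to the Young diagram.
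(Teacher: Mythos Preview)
Your approach is exactly the paper's: characterize $\Omega_{\tilde I}(\widetilde F_\bullet)\subseteq \Omega_I(\widetilde F_\bullet)$ by componentwise comparison of the index sequences, compute the codimension difference as $\sum_k |i_k-\tilde i_k|$, and conclude that a codimension one inclusion changes exactly one index by one. That is the entire content, and the paper's proof is the same two sentences.

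However, you have the direction of the Bruhat order reversed. With the paper's convention \eqref{closedsv}, smaller indices impose \emph{stronger} incidence conditions (since $F_{\tilde i_a}\subseteq F_{i_a}$ when $\tilde i_a\le i_a$), so $\Omega_{\tilde I}(\widetilde F_\bullet)\subseteq \Omega_I(\widetilde F_\bullet)$ if and only if $\tilde i_a\le i_a$ for all $a$, not $\tilde i_a\ge i_a$ as you wrote. Consequently your translation ``$|\sigma_J|=|\sigma_I|+1$, i.e.\ $\sum_a j_a=\sum_a i_a+1$'' is off by a sign: increasing codimension by one means $\sum_a j_a=\sum_a i_a-1$, and the unique changed coordinate satisfies $j_{a_0}=i_{a_0}-1$, not $i_{a_0}+1$. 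You seem to notice something is off (the ``more precisely'' paragraph silently switches to the correct operation of \emph{decreasing} an index from $b$ to $b-1$), but as written the two halves of your argument describe opposite operations: your $J$ with $j_{a_0}=i_{a_0}+1$ is the paper's $I^{-,b}$, which \emph{lowers} codimension, whereas the lemma's $I'=(I\setminus\{b\})\cup\{b-1\}$ is $I^{+,b}$. Once you flip the inequality in the containment criterion, everything lines up immediately and your proof is complete and identical to the paper's.
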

\begin{proof}
 It is easy to check that $\Omega_{\tilde{I}}(\widetilde{F}_{\bullet})\subseteq \Omega_I(\widetilde{F}_{\bullet})$ if and only if $\tilde{i}_k\leq i_k$ for $k=1,\dots,r$
here $I=\{i_1<\dots<i_r\}$ and $\tilde{I}=\{\tilde{i}_1<\dots<\tilde{i}_r\}$ and the difference of the dimensions of these Schubert varieties is $\sum_{k=1}^r (i_k-\tilde{i}_k)$, see equation \eqref{codimI}.
The desired statement follows immediately.
\end{proof}

\begin{defi}\label{simSV}
 The definition of the closed Schubert variety, see definition \ref{closedsv}, $\Omega_I(\widetilde{F}_{\bull})$ does not involve the elements of full flags $\widetilde{F}_{\bull}$ which have been discarded in the definition of $\Fl(I)$, therefore we can define $\Omega_I(F_{\bull})$ for all $F_{\bull}\in \Fl(I)$.

Let $T(I)$ be set of ranks of the partial flags in $\Fl(I)$, i.e., $T(I)=[n]-\{a\mid a<n,a\not\in I, a+1\in I\}$.
For $F_{\bull}\in \Fl(I)$, define (with $i_0=0, i_{r+1}=n$)
$$\widehat{\Omega}^{0}_I(F_{\bull})= \{V\in\Gr(r,n)\mid \dim (V\cap F_a)=j,
\ i_{j}\leq a<i_{j+1}, a\in T(I),\ j=1,\dots,r\}$$
This is a subset of the smooth locus of the normal  projective variety  $\Omega_I({F}_{\bull})$.
 \end{defi}
\begin{lemma}
\begin{enumerate}
\item[(a)]
 The complement $\Omega_I(F_{\bull})\setminus \widehat{\Omega}^{0}_I(F_{\bull})$ is of codimension $\geq 2$ in $\Omega_I(F_{\bull})$.
\item[(b)] $\widehat{\Omega}^{0}_I(F_{\bull})$ is homogeneous for the action of the stabilizer of a fixed  partial flag $F_{\bull}\in \Fl(I).$
\end{enumerate}
\end{lemma}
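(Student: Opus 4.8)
The plan is to reduce both statements to classical Schubert-cell facts by means of one organizing observation:
$$\widehat{\Omega}^{0}_I(F_{\bullet}) \;=\; \bigcup_{\widetilde{F}_{\bullet}} \Omega^0_I(\widetilde{F}_{\bullet}),$$
the union taken over all complete flags $\widetilde{F}_{\bullet}$ refining the partial flag $F_{\bullet}$ (those with $\widetilde F_a=F_a$ for all $a\in T(I)$). The inclusion $\supseteq$ is immediate from Definition \ref{simSV}: $\Omega_I(\widetilde F_\bullet)=\Omega_I(F_\bullet)$, and the conditions cutting out $\Omega^0_I(\widetilde F_\bullet)$ at ranks in $T(I)$ are exactly those defining $\widehat\Omega^0_I(F_\bullet)$. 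For $\subseteq$ I would start from $V\in\widehat\Omega^0_I(F_\bullet)$ and fill in the missing ranks one at a time. A missing rank is an $a<n$ with $a\notin I$ and $a+1\in I$; for such an $a$ one checks $a-1,a+1\in T(I)$, $\dim F_{a+1}-\dim F_{a-1}=2$, and (since only $a+1$ among $\{a,a+1\}$ lies in $I$) $\dim(V\cap F_{a+1})=\dim(V\cap F_{a-1})+1$. Hence $(V\cap F_{a+1})+F_{a-1}$ is a proper hyperplane of $F_{a+1}$, and I may take $\widetilde F_a$ to be any \emph{other} hyperplane of $F_{a+1}$ containing $F_{a-1}$; this forces $V\cap\widetilde F_a=V\cap F_{a-1}$, the dimension that the big open cell of $I$ requires at rank $a$. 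Missing ranks are non-adjacent, so these choices are independent and assemble into a complete refinement $\widetilde F_\bullet$ with $V\in\Omega^0_I(\widetilde F_\bullet)$.

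Granting this, part (b) should follow quickly. Let $P=\operatorname{Stab}(F_\bullet)\subseteq\GL(n)$, a parabolic. For a complete flag $\widetilde F_\bullet$ the Schubert cell $\Omega^0_I(\widetilde F_\bullet)$ is a single orbit of the Borel $\operatorname{Stab}(\widetilde F_\bullet)\subseteq P$, so it suffices to show $P$ acts transitively on the set of complete refinements of $F_\bullet$. That set is a product over the missing ranks $a$ of the $\Bbb P^1$'s of hyperplanes of $F_{a+1}$ through $F_{a-1}$, and $P$ acts on it through its Levi quotient, which surjects onto $\prod_a\GL(F_{a+1}/F_{a-1})$; since $\operatorname{PGL}_2$ is transitive on $\Bbb P^1$, transitivity follows. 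Then, given $V,V'\in\widehat\Omega^0_I(F_\bullet)$, I would pick refinements with $V\in\Omega^0_I(\widetilde F_\bullet)$ and $V'\in\Omega^0_I(\widetilde F'_\bullet)$, choose $g\in P$ with $g\widetilde F_\bullet=\widetilde F'_\bullet$, and observe that $gV$ and $V'$ lie in the single $\operatorname{Stab}(\widetilde F'_\bullet)$-orbit $\Omega^0_I(\widetilde F'_\bullet)$, so $hgV=V'$ for some $h\in\operatorname{Stab}(\widetilde F'_\bullet)\subseteq P$.

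For part (a), first note that $\widehat\Omega^0_I(F_\bullet)$ is open in $\Omega_I(F_\bullet)$: writing $d_I(a)=\#\{k:i_k\le a\}$, one has $\dim(V\cap F_a)\ge d_I(a)$ on all of $\Omega_I(F_\bullet)$, so $\widehat\Omega^0_I(F_\bullet)$ is carved out inside $\Omega_I(F_\bullet)$ by the open conditions $\dim(V\cap F_a)\le d_I(a)$, $a\in T(I)$. Hence the complement $Z$ is closed, and it is enough to show $Z$ contains no codimension-one subvariety. I would fix a complete refinement $\widetilde F_\bullet$ and use the Schubert-cell decomposition $\Omega_I(F_\bullet)=\Omega_I(\widetilde F_\bullet)=\bigsqcup_{\widetilde I}\Omega^0_{\widetilde I}(\widetilde F_\bullet)$. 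By Lemma \ref{plum} the codimension-one Schubert subvarieties are the $\Omega_{\widetilde I}(\widetilde F_\bullet)$ with $\widetilde I=(I-\{b\})\cup\{b-1\}$, $b\in I$, $b>1$, $b-1\notin I$, and the key point is that for each such $b$ the index $b-1$ is precisely a missing rank of $\Fl(I)$. Running the argument of the organizing observation at that single missing rank (now using $\dim(V\cap\widetilde F_b)=\dim(V\cap\widetilde F_{b-2})+1$ for $V$ in the cell) produces a complete refinement $\widetilde F'_\bullet$ of $F_\bullet$ with $V\in\Omega^0_I(\widetilde F'_\bullet)\subseteq\widehat\Omega^0_I(F_\bullet)$ for every $V$ in the dense cell $\Omega^0_{\widetilde I}(\widetilde F_\bullet)$. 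So $Z$ misses a dense subset of — hence, being closed, all of — each codimension-one Schubert subvariety; since $\Omega_I(F_\bullet)$ is covered by the big open cell, these codimension-one cells, and finitely many cells of codimension $\ge 2$, we get $\codim Z\ge 2$.

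I expect the main obstacle to be purely combinatorial: keeping straight the dictionary between the missing ranks of $\Fl(I)$, the positions $b-1$ of Lemma \ref{plum}, and the intersection dimensions $\dim(V\cap F_a)$, and verifying in each case that the two relevant dimensions differ by exactly one, so that the ``bad'' subspace $(V\cap F_{a+1})+F_{a-1}$ is a genuine hyperplane and a generic fill-in flag absorbs the boundary cell into the big open cell of $I$. The remaining ingredients — upper semicontinuity of intersection dimension, Schubert cells being single Borel orbits, and surjectivity of a parabolic onto the $\operatorname{PGL}_2$ factors of its Levi — are standard.
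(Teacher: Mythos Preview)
Your argument is correct and, for part (a), follows the same skeleton as the paper: fix a full-flag refinement $\widetilde F_\bullet$, use Lemma \ref{plum} to list the codimension-one Schubert subvarieties $\Omega_{I'}(\widetilde F_\bullet)$ with $I'=(I-\{b\})\cup\{b-1\}$, and check that each open cell $\Omega^0_{I'}(\widetilde F_\bullet)$ lies in $\widehat\Omega^0_I(F_\bullet)$. The paper verifies this last inclusion in one line ``from the definitions'': since $b-1$ is exactly a missing rank of $\Fl(I)$, one has $d_{I'}(a)=d_I(a)$ for every $a\in T(I)$, so the hat-cell conditions hold automatically. You instead re-run your organizing observation to manufacture a new refinement $\widetilde F'_\bullet$ with $V\in\Omega^0_I(\widetilde F'_\bullet)$; this works but is longer than needed for (a). One small expository slip: the clause ``hence, being closed, all of'' is not right --- $Z$ can meet $\Omega_{I'}(\widetilde F_\bullet)$ in lower strata --- but you do not actually use that claim; your concluding sentence correctly argues via the cell decomposition that $Z$ lies in the union of cells of codimension $\ge 2$.

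Where your write-up adds genuine value is the organizing identity $\widehat\Omega^0_I(F_\bullet)=\bigcup_{\widetilde F_\bullet}\Omega^0_I(\widetilde F_\bullet)$, which the paper does not state. It gives a clean proof of (b) (the paper only says ``easy calculation''): transitivity of the parabolic on the $\prod\Bbb P^1$ of refinements, together with Borel-transitivity on each $\Omega^0_I(\widetilde F_\bullet)$, immediately yields homogeneity.
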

\begin{proof} Part (b) follows from an easy calculation.

For (a), extend $F_{\bull}$ to a full flag $\widetilde{F}_{\bull}$. Let  $i>1\in I$ such that $i-1\not\in I$, let $I'=(I-\{i\})\cup\{i-1\}$. Then, $\Omega_{I'}(\widetilde{F}_{\bullet})$ is a codimension one subvariety of $\Omega_I(\widetilde{F}_{\bull})=\Omega_I(F_{\bull})$ and all such codimension one Schubert varieties are obtained this way. It suffices to observe that
$\Omega^0_{I'}(\widetilde{F}_{\bullet})\subset \widehat{\Omega}^{0}_I(F_{\bull})$ which is immediate from the definitions.
\end{proof}

The following lemma is { crucial} to the process of induction:
\begin{lemma}\label{induce}
Suppose $F_{\bull}\in \Fl(I)$ and $V\in \widehat{\Omega}^0_I(F_{\bull})$.
Then the partial flag $F_{\bull}$ induces full flags on $V$ and $Q=\Bbb{C}^n/Q$.
\end{lemma}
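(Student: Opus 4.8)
The plan is to exhibit the two induced flags explicitly and to verify completeness by a dimension count. Write $I=\{i_1<\dots<i_r\}$, recall $i_0=0$ and $i_{r+1}=n$, and set $T(I)=[n]\setminus\{a<n:a\notin I,\ a+1\in I\}$, which contains $I$ and $n$. Let $\pi\colon\Bbb{C}^n\to Q=\Bbb{C}^n/V$ be the quotient map, and for $a\in T(I)\cup\{0\}$ (with $F_0=0$) put $V_a=V\cap F_a$ and $Q_a=\pi(F_a)$, so that $\dim Q_a=a-\dim V_a$ and both $(V_a)$ and $(Q_a)$ are nested chains of subspaces. Being in $\widehat{\Omega}^{0}_I(F_{\bullet})$ means $\dim V_a=\#\{l:i_l\le a\}$ for every such $a$; I will take the induced flags on $V$ and on $Q$ to be the chains of distinct members of $(V_a)$ and of $(Q_a)$ respectively.

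For the flag on $V$: at $a=i_j$ one gets $\dim V_{i_j}=j$ for $0\le j\le r$, and $V_{i_r}=V$; since $F_{i_1}\subseteq\dots\subseteq F_{i_r}$, the chain $0\subsetneq V_{i_1}\subsetneq\dots\subsetneq V_{i_r}=V$ has one-dimensional successive quotients, hence is complete. For the flag on $Q$: the gaps of $T(I)\cup\{0\}$ are exactly the singletons $\{i_k-1\}$ with $i_k-1\notin I$, so consecutive elements $a<a'$ of $T(I)\cup\{0\}$ satisfy $a'-a\in\{1,2\}$, with $a'-a=2$ occurring only when $a'=i_k$ and $a=i_k-2\ge i_{k-1}$, in which case $\dim V_a=k-1$ and $\dim V_{a'}=k$. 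So in every case $\dim V_{a'}-\dim V_a\in\{0,1\}$, whence
$$\dim Q_{a'}-\dim Q_a=(a'-a)-(\dim V_{a'}-\dim V_a)\in\{0,1\}.$$
Thus $a\mapsto\dim Q_a$ is non-decreasing with steps $0$ or $1$, runs from $\dim Q_0=0$ to $\dim Q_n=n-r$, and therefore attains every integer in $[0,n-r]$; so the distinct members of $(Q_a)$ form a complete flag on $Q$. (This explains the definition of $\Fl(I)$: on extending $F_{\bullet}$ to a full flag, a discarded member $F_{i_k-1}$ with $i_k-1\notin I$ has $V\cap F_{i_k-1}=V\cap F_{i_k-2}$ and $\pi(F_{i_k-1})=\pi(F_{i_k})$, hence contributes nothing new to either flag.)

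I do not expect an essential difficulty here: once the combinatorics of $T(I)$ is pinned down the argument is a short linear-algebra computation, the only delicate points being the boundary cases ($i_1=1$, $i_r=n$, or $i_k=i_{k-1}+1$ so that no constituent is discarded just below $i_k$) and keeping track of which constituents are redundant on the $V$-side as opposed to the $Q$-side. If one prefers to avoid the bookkeeping, part (b) of the preceding lemma gives that $\widehat{\Omega}^{0}_I(F_{\bullet})$ is a single orbit under the stabilizer of $F_{\bullet}$, so it is enough to verify the statement for the coordinate flag $F_a=\langle e_1,\dots,e_a\rangle$ ($a\in T(I)$) together with $V=\langle e_i:i\in I\rangle$, where both induced flags can be written down by inspection.
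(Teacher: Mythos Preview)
Your argument is correct and follows essentially the same route as the paper: for the flag on $V$ you use the chain $V\cap F_{i_j}$ of dimensions $j$, and for the flag on $Q$ you track $\dim\pi(F_a)=a-\dim(V\cap F_a)$ along $T(I)$ and check there are no gaps. The paper organizes the $Q$-side computation by intervals $i_a\le k<i_{a+1}$ rather than by consecutive pairs in $T(I)\cup\{0\}$, but the content is identical; your alternative reduction via the homogeneity statement in part (b) of the preceding lemma is a nice extra observation not in the paper.
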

\begin{proof}
Since $F_{i_a}$ is a member of the flag and $\dim F_{i_a}\cap V=a$,
the statement for $V$ is clear. If $i_{a+1}=i_{a}+1$, then $F_{i_a}$ and $F_{i_{a+1}}$ have the same image  in $Q$. If $i_a\leq k<i_{a+1}-1$, and $a+1\leq r$ then the rank of the image of $F_k$ in $Q$ is exactly $k-a$. Therefore for $k$ in the range $i_a\leq k<i_{a+1}-1$, the rank of the image of $F_k$ in $Q$ ranges from   $i_a-a$ to $i_{a+1}-2-a$ (end points inclusive). The image of $F_{i_{a+1}}$ in $Q$ is $i_{a+1}-a-1$, and so there are no gaps in the ranks of the the images of $F_j, j\not\in T(I)$ in $Q$ (The range  $i_r\leq k\leq n$ is handled similarly).
\end{proof}
\section{The induction operation and properties}
We return to the setting of Section \ref{rumble}: Fix $(r,n,I_1,\dots,I_n)$ satisfying satisfying \eqref{dagger}.
\subsection{Definition of induction}\label{defInd}
Let $Z=\Fl(I_1)\times \Fl(I_2)\times\dots\times \Fl(I_s)$ and $\widehat{Y}\subset \Gr(r,n)\times Z$
be the universal intersection of closed Schubert varieties:
$$\widehat{Y}=\{(V,F_{\bull}(1),F_{\bull}(2),\dots,F_{\bull}(s))\mid V\in \bigcap_{i=1}^s\Omega_{I_i}(F_{\bull}(i))\}$$

The map $\pi:\widehat{Y}\to Z$ is birational and surjective. Let $\yosim\subset \widehat{Y}$ be the  universal intersection of the hatted open Schubert varieties introduced in Definition \ref{simSV},
$$\yosim=\{(V,F_{\bull}(1),\dots,F_{\bull}(s))\mid V\in \bigcap_{i=1}^s\widehat{\Omega}^0_{I_i}(F_{\bull}(i))$$

It is easy to see that $\yosim$ is smooth. Let $\aresim\subset \yosim$ be the ramification divisor of $\pi:\yosim\to Z$.
The following is immediate,
\begin{lemma}\label{yomama}
\begin{enumerate}
\item $\widehat{Y}-\yosim$ has codimension $\geq 2$ in the projective variety $\widehat{Y}$.
\item The closure of ${\pi(\aresim)}\subset Z$ has codimension $\geq 2$ in $Z$.
\item $\pi$ induces an isomorphism between $\yosim-\aresim$ and a open subset $U\subset Z$ such that all irreducible components of  $Z-U$ are of codimension $\geq 2$
in $Z$.
\item $\Pic(\yosim-\aresim)=\Pic(U)=\Pic(Z)$.
\item Any $SL(n)$-equivariant line bundle $\ml$ on  $U$ extends to a  $SL(n)$-equivariant line bundle on $Z$.
\end{enumerate}
\end{lemma}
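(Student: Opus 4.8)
The plan is to obtain (1) by a direct dimension count and then to derive (2)--(5) from Zariski's main theorem and the standard behaviour of (equivariant) Picard groups under removal of a codimension $\geq 2$ locus from a smooth ambient space (or stack).

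For (1), I would use the projection $q:\widehat Y\to\Gr(r,n)$. Since $\Gr(r,n)$ is $\GL(n)$-homogeneous and $\widehat Y$ is $\GL(n)$-invariant, $q$ is Zariski-locally trivial with fibre over $V$ equal to $\prod_{i=1}^s\{F_{\bull}\in\Fl(I_i): V\in\Omega_{I_i}(F_{\bull})\}$, a product of irreducible varieties of codimensions $|\sigma_{I_i}|$ in $\Fl(I_i)$; hence $\widehat Y$ is irreducible of dimension $r(n-r)+\sum_i(\dim\Fl(I_i)-|\sigma_{I_i}|)=\dim Z$ (using \eqref{dagger}). Now $\widehat Y-\yosim$ is the union over $i$ of the loci where $V\in\Omega_{I_i}(F_{\bull}(i))\setminus\widehat\Omega^0_{I_i}(F_{\bull}(i))$; by homogeneity of both $\Gr(r,n)$ and $\Fl(I_i)$, the codimension of $\{F_{\bull}\in\Fl(I_i): V\in\Omega_{I_i}(F_{\bull})\setminus\widehat\Omega^0_{I_i}(F_{\bull})\}$ inside $\{F_{\bull}: V\in\Omega_{I_i}(F_{\bull})\}$ equals the codimension of $\Omega_{I_i}(F_{\bull})\setminus\widehat\Omega^0_{I_i}(F_{\bull})$ in $\Omega_{I_i}(F_{\bull})$, which is $\geq 2$ by the lemma preceding Lemma \ref{induce}. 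Fibering back over $\Gr(r,n)$ gives $\dim(\widehat Y-\yosim)\leq\dim\widehat Y-2$, proving (1).

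For (2) and (3): $Z$ is a product of partial flag varieties, hence smooth, and $\pi:\widehat Y\to Z$ is proper and birational, so $\pi$ is an isomorphism over an open $U_0\subseteq Z$ with $\codim_Z(Z-U_0)\geq 2$ (a proper birational morphism to a smooth variety is an isomorphism in codimension one; cf. \cite[Section 3]{BKR}). Over $U_0$ the morphism $\pi$ is étale, so $\aresim$ is disjoint from $\pi^{-1}(U_0)$ and therefore $\overline{\pi(\aresim)}\subseteq Z-U_0$ has codimension $\geq 2$, which is (2). For (3): since $\widehat Y-\yosim$ has codimension $\geq 2$ by (1) and $\aresim$ is a divisor in the irreducible $\yosim$, the set $\yosim-\aresim$ is dense in $\widehat Y$, so $\pi|_{\yosim-\aresim}$ is birational; it is also étale (being the complement of the ramification divisor of the equidimensional morphism $\pi|_{\yosim}$ of smooth varieties), hence quasi-finite, so by Zariski's main theorem it is an open immersion onto an open $U\subseteq Z$. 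As $\pi$ is surjective, $Z-U\subseteq\overline{\pi(\aresim)}\cup\overline{\pi(\widehat Y-\yosim)}$, and both sets have codimension $\geq 2$ (by (2), and by (1) together with the fact that $\pi$ does not raise dimension); being closed, $Z-U$ then has all of its components of codimension $\geq 2$.

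Finally, (4) is immediate: $\Pic(\yosim-\aresim)=\Pic(U)$ via the isomorphism of (3), and $\Pic(U)=\Pic(Z)$ since $Z$ is smooth and $Z-U$ has codimension $\geq 2$. For (5), observe that $\widehat Y$, $\yosim$, $\aresim$ and hence $U$ are $\SLL(n)$-stable and $\pi$ is equivariant, so $[U/\SLL(n)]$ is an open substack of the smooth stack $[Z/\SLL(n)]$ with complement of codimension $\geq 2$; a line bundle on $[U/\SLL(n)]$ (equivalently, an $\SLL(n)$-equivariant line bundle on $U$) therefore extends to $[Z/\SLL(n)]$. Concretely: $\ml$ extends to a unique line bundle $\overline{\ml}$ on $Z$ by (4); $\overline{\ml}$ carries a (unique) $\SLL(n)$-linearization because $\SLL(n)$ is simply connected with no nontrivial characters and $Z=\prod_i \SLL(n)/P_i$; and this linearization restricts to the given one on $U$ since two such linearizations differ by a character of $\SLL(n)$, of which there are none, using $\mathcal{O}(U)^*=\Bbb{C}^*$. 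The only step of real substance is (1) — transporting the known codimension $\geq 2$ bound for $\Omega_{I_i}(F_{\bull})\setminus\widehat\Omega^0_{I_i}(F_{\bull})$ to the universal family via homogeneity; everything after it is a formal consequence of Zariski's main theorem and excision of codimension $\geq 2$ loci.
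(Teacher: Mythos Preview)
Your proof is correct and follows essentially the same route as the paper's. The paper treats (1), (2), and (4) as immediate, invokes Zariski's main theorem for (3), and for (5) extends the line bundle to $Z$ and argues uniqueness of the $\SLL(n)$-linearization using the codimension statement in (3); you carry out exactly these steps, supplying the dimension count for (1) via homogeneity and the lemma on $\Omega_{I}(F_{\bull})\setminus\widehat{\Omega}^0_{I}(F_{\bull})$, and making explicit the open-immersion argument (\'etale $+$ birational $+$ normal target) underlying the appeal to Zariski's main theorem.
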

\begin{proof}
Zariski's main theorem gives (3). For (5), we extend the line bundle $\ml$ first as a line bundle to $Z$. This extension has a canonical $SL(n)$ equivariant
structure. Now the restriction to $U$ of the extension has the same equivariant structure as $\ml$ because any two $SL(n)$ equivariant structures on a line bundle on $U$ coincide (use the codimension statement in (3)).
\end{proof}

There is a natural map of stacks  $p:\yosim/\SLL(n)\to \mathcal{S}$ by Lemma \ref{induce}, here $\mathcal{S}$ is the stack defined in Definition \ref{defiSS}. It carries a natural divisor $R_{\mathcal{S}}$, such
that $p^{-1}(R_{\mathcal{S}})=\aresim$. There is also a section $i:\mathcal{S}\to \yosim$ given by the direct sum construction as in Section \ref{may11proof}. We obtain a variant of the basic diagram of stacks \eqref{basicdiagram}
here (as in Definition \ref{bee}),
$\mathcal{B}=(\Fl(I_1)\times\Fl(I_2)\times \dots\times \Fl(I_s))/\SL(n)=Z/\SL(n)$, and $i'=\pi\circ i$:
\begin{equation}\label{basicdiagram1}
\xymatrix{
& \yosim/SL(n)\ar[dl]^{\pi}\ar[dr]_{p}\\
\mathcal{B}  & &     \mathcal{S}\ar@/_/[ul]_i\ar[ll]^{i'}}
\end{equation}
\begin{defi}
Let $\ml\in \Pic(\ms-R_{\ms})$. Now, $p^*\ml$ is a line bundle on $\yosim-\aresim$ which is equivariant for the action of $\operatorname{SL}(n)$. By Lemma \ref{yomama}, we get a $\SL(n)$ equivariant line bundle on $Z$, and hence one on $\Fl(n)^s$.  This defines a mapping of $\Bbb{Z}$-modules which will be called induction:
\begin{equation}\label{india}
\operatorname{Ind}: \operatorname{Pic}(\mathcal{S}-R_{\mathcal{S}})\to \Pic(\mathcal{B})\subseteq \Pic(\Fl(n)^s).
\end{equation}
\end{defi}
\subsection{Properties of Induction}

\begin{theorem}\label{shosty}
\begin{enumerate}
\item[(1)] The induction operation establishes an isomorphism between the $\Bbb{Z}$-modules $\Pic^{\deg=0}(\ms-R_{\ms})$ and $\Pic^{\deg =0}(\mathcal{B})$.
\item[(2)] For all  $\ml\in \Pic^{\deg=0}(\ms-R_{\ms})$, we have an isomorphism
$$H^0(\mathcal{S}- R_{\mathcal{S}},\mathcal{L})\leto{\sim} H^0(\mathcal{B}, \operatorname{Ind}(\mathcal{L})).$$
(Taking $\ml=\mathcal{O}$, we recover Fulton's conjecture (as in Section \ref{may11proof}) since  $\operatorname{Ind}(\mathcal{O})=\mathcal{O}$,
(given the computation of the ramification divisor as in \cite{belkaleIMRN}, see Step (a) in Section \ref{may11proof}). This proof of Fulton's conjecture is a variant of \cite{BKR}, where the argument uses a smaller partial flag variety $\mathcal{B}'$ replacing $\mathcal{B}$, i.e., a surjection $\mb\twoheadrightarrow \mb'$).
\item[(3)]  Under the bijection  in (1),
 $\Pic^+(\mathcal{S}-R_{\mathcal{S}})$ corresponds to $\Pic^{+, \deg =0}(\mathcal{B})$. Therefore
 $$\Pic^+_{\Bbb{Q}}(\mathcal{S}-R_{\mathcal{S}})=\Pic^{+, \deg =0}_{\Bbb{Q}}(\mathcal{B})=\mf_2.$$
\item[(4)]Suppose $\ml=\mathcal{O}(E)\in \Pic(\ms)$ where $E$ is a locus
with ``modular properties". Then $\operatorname{Ind}(\ml)\in \Pic(\mathcal{B})\subseteq \Pic(\Fl(n)^s)$ equals $\mathcal{O}(\widetilde{E})$, where $\widetilde{E}$ is  $p^{-1}(E)\cap (\yosim- \aresim)$ as a divisor on $U\subset Z$ (as in Lemma \ref{yomama}), hence by taking closures, on $Z$. Therefore, $\operatorname{Ind}(\ml)$ also has a modular interpretation (``off codimension $2$, the corresponding point of $\mathcal{S}$ satisfies the modular property of being in $E$").
\end{enumerate}
\end{theorem}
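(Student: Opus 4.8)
All four statements will be read off from the diagram \eqref{basicdiagram1} together with three structural facts. First, $\pi\colon\yosim/\SLL(n)\to\mb$ is ``birational'' in the sense of Lemma \ref{yomama}, so $\pi_*\mathcal{O}=\mathcal{O}$ by Zariski's main theorem, and $\pi$ restricts to an isomorphism $\yosim-\aresim\xrightarrow{\ \sim\ }U$ with $Z\setminus U$ of codimension $\ge2$ in $Z=\Fl(I_1)\times\dots\times\Fl(I_s)$. Second, $i$ is a section of $p$ (that is $p\circ i=\operatorname{id}_{\ms}$) and $i^{-1}(\aresim)=R_{\ms}$, simply because $\aresim=p^{-1}(R_{\ms})$; hence $i$ carries $\ms-R_{\ms}$ into the open substack $(\yosim-\aresim)/\SLL(n)$. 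Third, the $\Bbb{C}^*$-action on $\ms$ of Definition \ref{longy} is, along the direct-sum section $i$, induced by the one-parameter subgroup $\phi_t\subset\SLL(n)$ which is $t^{n-r}$ on $V$ and $t^{-r}$ on $Q$; thus (as in Step (b) of Section \ref{may11proof}) $\lim_{t\to0}\phi_t\cdot x\in i(\ms)$ for every $x\in\yosim$, and the $\phi_t$-weight of a line bundle along $i(\ms)$ is simultaneously the Mumford index (the linear functional whose vanishing cuts out $\Pic^{\deg=0}(\mb)$) and, pulled back along $i$, the $\Bbb{C}^*$-degree on $\ms$. The consequence I will use repeatedly is that an $\SLL(n)$-invariant section of a line bundle whose $\phi_t$-weight along $i(\ms)$ vanishes is determined by, and extends from, its restriction to $i(\ms)$.

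\emph{Part (1).} The operation $\operatorname{Ind}$ is well defined by Lemma \ref{yomama}(5). Since $i$ lands in the open locus and $p\circ i=\operatorname{id}$, we get $(i')^*\circ\operatorname{Ind}=i^*\pi^*\operatorname{Ind}=i^*p^*=\operatorname{id}$ on $\Pic(\ms-R_{\ms})$, so $\operatorname{Ind}$ is a split injection. Comparing $\phi_t$-weights along $i(\ms)$ shows $\deg_{\mb}\operatorname{Ind}(\ml)=\deg_{\ms}\ml$, so $\operatorname{Ind}$ maps $\Pic^{\deg=0}(\ms-R_{\ms})$ into $\Pic^{\deg=0}(\mb)$; conversely, given $\mathcal{M}\in\Pic^{\deg=0}(\mb)$, the line bundle $\ml:=i^*\pi^*\mathcal{M}$ has $\deg_{\ms}\ml=0$, and because $\pi^*\mathcal{M}$ has vanishing $\phi_t$-weight along $i(\ms)$ it is the $p$-pullback of its $i$-restriction on the contracting open locus $(\yosim-\aresim)/\SLL(n)$, whence $\operatorname{Ind}(\ml|_{\ms-R_{\ms}})=\mathcal{M}$. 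So $\operatorname{Ind}$ and $(i')^*$ are mutually inverse on the $\deg=0$ subgroups. (Alternatively, the bijection can be checked on generators by feeding the flag-extraction of Lemma \ref{induce} through the explicit descriptions of $\Pic(\ms)$ and $\Pic(\mb)$ in Lemmas \ref{office} and \ref{list}; this is the computation that yields the formula of Theorem \ref{dirac1}, and the bookkeeping closes up precisely because of the equality in \eqref{evineq}, using the identification of the ramification divisor from \cite{belkaleIMRN}.)

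\emph{Part (2).} This is the two-step argument of Theorem \ref{may11} with a nontrivial coefficient bundle. On the $\mb$-side, $H^0(\mb,\operatorname{Ind}(\ml))=H^0(U/\SLL(n),\operatorname{Ind}(\ml)|_U)=H^0((\yosim-\aresim)/\SLL(n),p^*\ml)$, using the codimension-$\ge2$ extension of Lemma \ref{yomama} and the isomorphism $\pi\colon\yosim-\aresim\cong U$, under which $\operatorname{Ind}(\ml)|_U$ becomes $p^*\ml$. On the $\ms$-side, $p^*\colon H^0(\ms-R_{\ms},\ml)\to H^0(\yosim-\aresim,p^*\ml)^{\SLL(n)}$ has restriction along $i$ as a left inverse (using $i^*p^*\ml=\ml$), and it is also surjective: for an invariant section $\tau$, the difference $\tau-p^*(i^*\tau)$ is an invariant section vanishing on $i(\ms-R_{\ms})$, along which $\phi_t$ acts with weight $\deg\ml=0$, so it vanishes identically by the contraction recalled above. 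Combining the two identifications gives the isomorphism $H^0(\ms-R_{\ms},\ml)\cong H^0(\mb,\operatorname{Ind}(\ml))$; the case $\ml=\mathcal{O}$ recovers $\operatorname{Ind}(\mathcal{O})=\mathcal{O}$ and the proof of Fulton's conjecture of Step (a) in Section \ref{may11proof}.

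\emph{Parts (3), (4), and the main obstacle.} Part (3) is now formal: $\operatorname{Ind}$ commutes with tensor powers, so Part (2) applied to $\ml^{\otimes N}$ gives $\ml\in\Pic^{+}(\ms-R_{\ms})\iff\operatorname{Ind}(\ml)\in\Pic^{+}(\mb)$, and with Part (1) this identifies $\Pic^{+}_{\Bbb{Q}}(\ms-R_{\ms})$ with $\Pic^{+,\deg=0}_{\Bbb{Q}}(\mb)$, which is $\mfqtwo$ by Proposition \ref{secondthing}. For Part (4), if $\ml=\mathcal{O}(E)$ then by construction $\operatorname{Ind}(\mathcal{O}(E))$ extends $p^*\mathcal{O}(E)|_{\yosim-\aresim}=\mathcal{O}\big(p^{-1}(E)\cap(\yosim-\aresim)\big)$ over $Z\setminus U$; transporting this divisor via $\pi$ to $U$ and taking its closure in $Z$ is, by definition, $\widetilde E$, so $\operatorname{Ind}(\mathcal{O}(E))=\mathcal{O}(\widetilde E)$, and the modular description of $\widetilde E$ away from codimension $2$ is inherited from that of $E$ through $p$. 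The one genuinely non-formal point is the surjectivity inside Parts (1) and (2) --- that $\SLL(n)$-invariant line bundles and sections on $\yosim-\aresim$ descend to $\ms-R_{\ms}$ --- and both cases rest on the third structural fact, namely the $\phi_t$-contraction of $\yosim$ onto the direct-sum locus $i(\ms)$ together with the vanishing of the $\phi_t$-weight that is forced by $\deg=0$; this is exactly the geometric input behind Theorem \ref{may11}, and it in turn relies on the description of the ramification divisor $R_{\ms}$ from \cite{belkaleIMRN}. Everything else is diagram-chasing with the dictionaries of Lemmas \ref{office}, \ref{list} and \ref{induce}.
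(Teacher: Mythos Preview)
Your proposal is correct and follows essentially the same route as the paper's proof. Both arguments hinge on the same geometric mechanism: the one-parameter subgroup $\phi_t\subset\SLL(n)$ contracts $\yosim$ onto the direct-sum locus $i(\ms)$, and the $\deg=0$ condition is exactly the vanishing of the $\phi_t$-weight, which allows sections and line bundles to be propagated without zeros or poles between a point $x$ and its limit $i(p(x))$; this is what the paper uses (invoking \cite[Proposition~10]{BK} and Lemma~\ref{schemy}), and it is precisely your ``third structural fact''. Your organization, isolating the three structural inputs at the outset and then reading off (1)--(4) as diagram chases, is a bit tidier than the paper's, but the substantive content is identical.
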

\begin{proof}
That $\Pic^{\deg=0}(\ms-R_{\ms})$ maps to $\Pic^{\deg =0}(\mathcal{B})$ can be seen as follows: Let $L=L_{\lambda}\boxtimes L_{\mu}\boxtimes L_{\nu}$ be the image of $\ml\in\Pic^{\deg=0}(\ms-R_{\ms})$.

Let $x=(V,F_{\bull}(1),\dots,F_{\bull}(s))\in \widehat{Y}^0-\aresim$, which can be considered an open subset of $Z$, whose complement has codimension $\geq 2$. As in Section \ref{may11proof} choose a direct sum decomposition $\Bbb{C}^n=V\oplus Q$, and let $\phi_t\in \SLL(n)$ be an automorphism which is multiplication by $t^{n-r}$ on $V$ and multiplication by $t^{-r}$ on $Q$. The point $i(p(x))=\lim_{t\to 0}\phi_t(x)$. The action of $\phi_t$ on $L_{i(p(x)}$ is the same as the action of $t\in \Bbb{C}^*$ on $\ml_p(x)$
described in Definition \ref{longy}. Therefore $\phi_t$ acts by zero on $L_{i(p(x)}$, and hence equality in the inequality \eqref{evineq} holds (see Remark \ref{markJ}), and hence $L\in \Pic^{\deg =0}(\mathcal{B})$.
Conversely, if $L=L_{\lambda_1}\boxtimes \dots\boxtimes L_{\lambda_s}$ is in
$\Pic^{\deg =0}(\mathcal{B})$, then with $x$ as above, equality in \eqref{evineq} gives a  isomorphism $L_x\to L_{i(p(x))}$, by propagating a section of $L$ at $x$ to all $\phi_t(x)$ and extending it to $t=0$ (there are no zeroes or poles of this extended section, since because we have assumed equality in  \eqref{evineq}, see \cite[Proposition 10]{BK}, also Lemma \ref{schemy} below). Therefore, $L$ is  the induction of the pull back of $L$ under $i$, which is in $\Pic^{\deg=0}(\ms-R_{\ms})$. This finishes the proof of (1).

This also shows (2), because in the above situation, the value of any section of invariant section of $L$ at $x$ is the value at $L_{i(p(x))}$
under the isomorphism  $L_x\to L_{i(p(x))}$. Therefore the given map is surjective (see Lemma \ref{schemy} below for an argument in families). It is injective because we have the section $i$.

Part (3) follows from parts (1) and (2).
\end{proof}
\begin{remark}
In fact the induction map \eqref{india} is itself an isomorphism. To show surjection, one can consider $p^*i^*\ml\tensor \ml^{-1}$ with $\ml\in\Pic(\mathcal{B})=\Pic((\yosim- \aresim)/\SLL(n))$ which can be shown to have the relevant Mumford index $0$, and proceed as in the proof of (1) above to show that $p^*i^*\ml\tensor \ml^{-1}$ is trivial.
\end{remark}

\begin{proposition}\label{bloodtest}
$\Pic^{+}_{\Bbb{Q}}(\mathcal{\ms})$ surjects onto $\Pic^{+}_{\Bbb{Q}}(\mathcal{\ms}-R_{\ms})$. This surjection has a section.
\end{proposition}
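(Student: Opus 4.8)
The plan is to exhibit the surjection as the restriction map $\operatorname{res}\colon\Pic(\ms)\to\Pic(\ms-R_\ms)$, to deduce surjectivity on effective cones by a direct divisor argument, and then to build a linear section by retracting the cone $\Pic^+_{\Bbb Q}(\ms)$ along the kernel of $\operatorname{res}$; the construction of that retraction — which must preserve effectivity — is where essentially all the work sits, and it is there that the mult-one property $h^0(\ms,\mathcal O_\ms(mR_\ms))=1$ (proved in §\ref{may11proof}) gets used. Observe that, under Proposition \ref{stbe}(a) and Theorem \ref{shosty}(3), $\operatorname{res}$ becomes the induction surjection $\Gamma_{r,\Bbb Q}\times\Gamma_{n-r,\Bbb Q}\twoheadrightarrow\mf_{2,\Bbb Q}$, so this proposition is precisely the input needed for the ``section'' clause of Theorems \ref{egregium} and \ref{dirac1}.

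\emph{Surjectivity.} Since $\ms$ is a smooth irreducible Artin stack and $R_\ms$ an effective Cartier divisor, $\operatorname{res}$ is surjective with kernel $V$ generated by the classes of the irreducible components $R_1,\dots,R_k$ of $R_\ms$. Given $\bar{\mathcal M}\in\Pic^+_{\Bbb Q}(\ms-R_\ms)$, scale to an integral $\bar{\mathcal N}$ with a nonzero section $\sigma$, choose any lift $\mathcal N_0\in\Pic(\ms)$, and regard $\sigma$ as a rational section of $\mathcal N_0$: its divisor equals $E-\sum_j m_jR_j$ with $E\geq 0$ containing no $R_j$, so $\mathcal N_0\otimes\bigotimes_j\mathcal O_\ms(R_j)^{-m_j}\cong\mathcal O_\ms(E)$ is an effective lift of $\bar{\mathcal N}$. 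Hence $\operatorname{res}\bigl(\Pic^+_{\Bbb Q}(\ms)\bigr)=\Pic^+_{\Bbb Q}(\ms-R_\ms)$.

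\emph{Structure of the kernel.} As $\ms$ is irreducible, a product of nonzero global sections is nonzero; feeding this into $h^0(\ms,\mathcal O(mR_\ms))=1$ one checks in turn that $h^0(\ms,\mathcal O(kR_j))=1$ for all $j$ and $k>0$; that two linearly equivalent effective divisors supported on $R_1\cup\dots\cup R_k$ coincide, whence the $[R_j]$ are linearly independent and $\dim V=k$; and that if $\mathcal O(R_j)^{\otimes k}\cong\mathcal L_1\otimes\mathcal L_2$ with $\mathcal L_i$ effective then, multiplying sections, $\operatorname{div}(\sigma_i)=k_iR_j$ and $\mathcal L_i\cong\mathcal O(k_iR_j)$. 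Thus each $\Bbb Q_{\geq0}[R_j]$ is an extremal ray of $\Pic^+_{\Bbb Q}(\ms)$. By Proposition \ref{stbe}(a) this cone is the \emph{product} $\Gamma_{r,\Bbb Q}\times\Gamma_{n-r,\Bbb Q}$, whose extremal rays lie in one factor; so $V=V_r\oplus V_{n-r}$ with $V_r\subseteq\Pic_{\Bbb Q}(\Fl(r)^s/\SL(r))$ and $V_{n-r}\subseteq\Pic_{\Bbb Q}(\Fl(n-r)^s/\SL(n-r))$, each spanned by extremal rays of the corresponding eigencone. This matches the computation of $R_\ms$ from \cite{belkaleIMRN} recalled in §\ref{may11proof}, where $R_\ms$ is pulled back from $L_r\boxtimes L_{n-r}$ and its components come from the two factors separately; that computation further identifies $V_r,V_{n-r}$ with coordinate subspaces in the fundamental-weight coordinates $c_{i,b}$, complementary in each factor to the coordinates that survive on $\mathcal B$ (cf. Lemma \ref{list} and Remark \ref{forcing}).

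\emph{The section, and the main obstacle.} Let $V^c$ be the span of those complementary ($\mathcal B$-surviving) coordinates inside $\Pic^{\deg=0}_{\Bbb Q}(\ms)=\Pic_{\Bbb Q}(\Fl(r)^s/\SL(r))\times\Pic_{\Bbb Q}(\Fl(n-r)^s/\SL(n-r))$ (Proposition \ref{stbe}(b)), so that $\Pic^{\deg=0}_{\Bbb Q}(\ms)=V\oplus V^c$; since by Definition \ref{longy} and Theorem \ref{shosty}(3) both effective cones sit in degree $0$, $\operatorname{res}$ restricts to a linear isomorphism $V^c\xrightarrow{\ \sim\ }\Pic^{\deg=0}_{\Bbb Q}(\ms-R_\ms)$, whose inverse $\iota$ is the candidate section. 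It is a map of cones as soon as the coordinate projection $\Pi'\colon\Pic^{\deg=0}_{\Bbb Q}(\ms)\to V^c$ with $\ker\Pi'=V$ carries $\Pic^+_{\Bbb Q}(\ms)$ into itself, for then, picking an effective lift $\mathcal L$ of a given $\bar{\mathcal M}$ by the surjectivity step, $\iota(\bar{\mathcal M})=\Pi'(\mathcal L)\in\Pic^+_{\Bbb Q}(\ms)$. The remaining point — that $\Pi'$ preserves effectivity, equivalently that $\Gamma_{r,\Bbb Q}$ and $\Gamma_{n-r,\Bbb Q}$ each retract linearly onto the coordinate face complementary to $V_r$, resp. $V_{n-r}$ — is the heart of the matter and the precise analogue for $\ms$ of the decomposition $\mf\cong(\Bbb Q_{\geq0})^q\times\mf_{2,\Bbb Q}$ of Theorem \ref{two}. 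I would prove it by the same peeling mechanism: if an effective $\mathcal L\in\Pic^+_{\Bbb Q}(\ms)$ has nonzero $[R_j]$-coordinate, then a Mumford-index/semistability argument parallel to the vanishing Proposition used in the proof of Theorem \ref{two} shows that every section of the associated bundle on $\ms$ vanishes along $R_j$, so $\mathcal L\cong\mathcal O(R_j)\otimes\mathcal L'$ with $\mathcal L'$ effective and strictly smaller in the $[R_j]$-direction; iterating over the (independent) coordinates peels $\mathcal L$ down to $\Pi'(\mathcal L)$, which is hence effective, while the extremality of the $[R_j]$ established above makes the peeling — and therefore $\iota$ — well-defined and linear.
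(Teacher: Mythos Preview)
The paper's proof is essentially two lines and uses the induction machinery you have already invoked: the section $\Pic^+_{\Bbb Q}(\ms - R_\ms) \to \Pic^+_{\Bbb Q}(\ms)$ is simply $\ml \mapsto (i')^* \operatorname{Ind}(\ml)$, where $i': \ms \to \mathcal{B}$ is the direct-sum map from diagram \eqref{basicdiagram1}. That this is a section follows because $p \circ i = \operatorname{id}$, so $(i')^*\operatorname{Ind}(\ml)$ restricts to $\ml$ on $\ms - R_\ms$; that it preserves effectivity is immediate from Theorem \ref{shosty}(2), since a nonzero section of $\ml$ on $\ms - R_\ms$ corresponds to a nonzero section of $\operatorname{Ind}(\ml)$ on $\mathcal{B}$, whose pullback to $\ms$ via $i'$ agrees with the original section on $\ms - R_\ms$ and is hence nonzero. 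Your surjectivity argument and kernel analysis are correct (the latter reappears, with the same ideas, as Proposition \ref{bee131} later in the paper), but neither is needed for this proposition.

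Your route to the section is genuinely different and substantially harder: you try to build a linear retraction of the cone onto a complementary face by peeling off the extremal rays $[R_j]$. The gap is in the peeling step. You assert that if $\mathcal L$ has nonzero $[R_j]$-coordinate then every section vanishes along $R_j$, but this presupposes a choice of complement $V^c$ in which to read that coordinate, and you have not shown that your candidate $V^c$ (``the $\mathcal B$-surviving coordinates'') is even well-defined as a subspace of $\Pic^{\deg=0}_{\Bbb Q}(\ms)$, nor that the components $R_j$ correspond to the complementary fundamental-weight directions --- the computation in \S\ref{may11proof} identifies only the \emph{total} class $[R_\ms]$ with $L_r \boxtimes L_{n-r}$, not its irreducible components with coordinate rays. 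More seriously, the Mumford-index/semistability argument on $\ms$ that you gesture at is not a direct transcription of the one used for Theorem \ref{two}: it would require analyzing one-parameter subgroups of the group $H$ of \eqref{gruppe} destabilizing points of each $R_j$, which is nowhere carried out. The paper sidesteps all of this by using the geometry of $\mathcal B$ to manufacture the section directly.
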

\begin{proof}
If $\ml$ is a line bundle on $\mathcal{S}-R_{\mathcal{S}}$, $\operatorname{Ind}(\ml)$ is a line bundle on $\mathcal{B}$ and we can restrict it to $\ms$. This shows that
$\Pic(\mathcal{\ms})$ surjects onto $\Pic(\mathcal{\ms}-R_{\ms})$, and there is a canonical section $\Pic(\mathcal{\ms}-R_{\ms})\to \Pic(\ms)$. This also shows $\Pic^{\deg =0}(\mathcal{\ms})$ surjects onto $\Pic^{\deg=0}(\mathcal{\ms}-R_{\ms})$ (with a section). If $\ml\in   \Pic^{\deg=0}(\mathcal{\ms}-R_{\ms})$ has non-zero global sections then $\operatorname{Ind}(L)$ restricted to $\ms$ also has a non-zero global section by Theorem \ref{shosty}, as desired.
\end{proof}
\begin{lemma}\label{schemy}
Let $\Bbb{A}^1_X=X\times \Bbb{A}^1$, where $X$ is a variety, and
$\ml$ a line bundle on $\Bbb{A}^1_X$  which is linearized for the action of $\Bbb{G}_m$ (acting on the $\Bbb{A}^1$ factor). Suppose  $\Bbb{G}_m$ acts trivially on $\ml$ restricted to $X_0=X\times 0$, Then, $\ml$ is pull back of a line bundle on $X$  via $\Bbb{A}^1_X\to X$ (with the induced $\Bbb{G}_m$ action).
\end{lemma}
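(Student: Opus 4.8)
The plan is to reduce to the elementary fact that on an algebraic group acting on itself the Picard group of the total space splits off a copy of the character lattice. First I would argue that, since the question is about the line bundle $\ml$ and its $\Bbb{G}_m$-linearization, we may twist $\ml$ by the pullback of $\ml|_{X_0}$ from $X$, so that without loss of generality $\ml$ restricts to the \emph{trivial} $\Bbb{G}_m$-linearized line bundle on $X_0$. Our goal then becomes: $\ml$ is itself equivariantly trivial. Next I would use that $\Bbb{A}^1_X \to X$ is an $\Bbb{A}^1$-bundle, so that as a mere line bundle $\ml \cong \operatorname{pr}_X^* \ml_0$ for some $\ml_0$ on $X$ (this is the homotopy invariance of $\operatorname{Pic}$ for $\Bbb{A}^1$-bundles over a variety; if $X$ is not smooth one can still see it by noting $\operatorname{Pic}(\Bbb{A}^1_X) = \operatorname{Pic}(X)$ via restriction to $X_1 = X\times\{1\}$ together with the observation that $\ml|_{X_1}$ and $\ml|_{X_0}$ differ by a bundle supported on the principal divisor $X_0$, hence are isomorphic). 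After twisting by $\operatorname{pr}_X^*\ml_0^{-1}$ we may therefore assume $\ml \cong \mathcal{O}_{\Bbb{A}^1_X}$ as a line bundle, carrying \emph{some} $\Bbb{G}_m$-linearization which is trivial on $X_0$.

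The heart of the matter is then the classification of $\Bbb{G}_m$-linearizations on $\mathcal{O}_{\Bbb{A}^1_X}$. Any such linearization is given by a nowhere-vanishing regular function $f$ on $\Bbb{G}_m \times \Bbb{A}^1_X$ satisfying the cocycle identity; writing $\Bbb{G}_m = \operatorname{Spec} k[t,t^{-1}]$ and using that the global units of $\Bbb{G}_m \times \Bbb{A}^1_X$ are (units of $X$) times a power of $t$ — here the $\Bbb{A}^1$ direction contributes no new units — the cocycle $f$ is forced to be of the form $t^m \cdot u$ with $u$ a unit pulled back from $X$, and the cocycle condition pins down the $X$-dependence so that the linearization is the standard weight-$m$ twist (possibly after a coboundary). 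Restricting to $X_0$: the fiber of $\mathcal{O}$ at $X_0$ with this linearization is the weight-$m$ character of $\Bbb{G}_m$, and the hypothesis that $\Bbb{G}_m$ acts trivially there forces $m = 0$. Hence the linearization is (cohomologically) the trivial one, i.e. $\ml$ with its linearization is the pullback of $\ml_0$ (with trivial $\Bbb{G}_m$-action) from $X$, which is what we wanted.

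The main obstacle I anticipate is the bookkeeping in the second paragraph: making precise the claim that the only $\Bbb{G}_m$-equivariant line bundles on $\Bbb{A}^1_X$ that are trivial on $X_0$ are the pullbacks from $X$, without assuming $X$ affine or smooth. The clean way is to invoke exactly the statement being proved in an unadorned form — namely that $\operatorname{Pic}^{\Bbb{G}_m}(\Bbb{A}^1_X) \cong \operatorname{Pic}(X) \oplus \Bbb{Z}$, with the $\Bbb{Z}$ recording the weight along $X_0$ — which for $\Bbb{A}^1$ with the standard scaling action is standard (see e.g. the equivariant homotopy invariance of $\operatorname{Pic}$); the weight-$0$ summand is then the pullbacks, and vanishing of the weight is precisely our hypothesis. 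One subtlety worth a line of care: one must make sure the linearization restricted to $X_0$ being trivial is the same as the weight vanishing, which is immediate since $X_0$ is the fixed locus of the $\Bbb{G}_m$-action on $\Bbb{A}^1_X$ and the linearization there is by definition a character of $\Bbb{G}_m$, i.e. an integer.
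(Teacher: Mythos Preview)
Your argument is essentially correct but takes a genuinely different route from the paper's. The paper works affine-locally on $X$: on an affine open (small enough that $\ml|_{X_0}$ trivializes), the restriction $H^0(\Bbb{A}^1_X,\ml)\to H^0(X_0,\ml|_{X_0})$ is surjective, hence surjective on $\Bbb{G}_m$-invariants; lifting a trivializing section of $\ml|_{X_0}$ to a $\Bbb{G}_m$-invariant section $s$ of $\ml$, one observes that the zero locus of $s$ is $\Bbb{G}_m$-invariant and misses $X_0$, hence is empty (every orbit closure meets $X_0$), so $s$ trivializes $\ml$ equivariantly. Uniqueness of the invariant lift (since $H^0(\Bbb{A}^1_X,\mathcal{O})^{\Bbb{G}_m}=H^0(X_0,\mathcal{O})$) lets one glue over an affine cover.

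By contrast, you argue globally: twist to make $\ml|_{X_0}$ trivial, invoke $\Bbb{A}^1$-homotopy invariance of $\operatorname{Pic}$ to make the underlying bundle trivial, then classify $\Bbb{G}_m$-linearizations on $\mathcal{O}_{\Bbb{A}^1_X}$ via their weight along $X_0$. This is fine in spirit, but two points deserve comment. First, $\operatorname{Pic}(X\times\Bbb{A}^1)=\operatorname{Pic}(X)$ can fail for non-seminormal $X$ (e.g.\ a cuspidal curve), so the step where you trivialize the underlying bundle globally is not literally valid for an arbitrary ``variety''; the paper's affine-local lifting argument avoids this entirely and needs no hypothesis on $X$. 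Second, your final paragraph proposes to invoke $\operatorname{Pic}^{\Bbb{G}_m}(\Bbb{A}^1_X)\cong\operatorname{Pic}(X)\oplus\Bbb{Z}$, which is essentially the lemma itself and so is circular as written. What your cocycle analysis in the previous paragraph actually shows (and this is the honest content of your proof) is that once $\ml\cong\mathcal{O}$ the linearization is a single integer weight, detected on $X_0$; that part is fine. The paper's approach buys you a short, hypothesis-free argument; yours buys a more structural picture but at the cost of needing $X$ normal (or seminormal) for the $\Bbb{A}^1$-invariance step to go through cleanly.
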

\begin{proof}
If $X$ is affine, the restriction map
$$H^0(\Bbb{A}^1_X,\ml)\to H^0(X_0,\ml_{X_0})$$ is surjective on sections, hence surjective on $\Bbb{G}_m$-invariant sections as well. Lifting an invariant section (i.e., trivializing $\ml$ on $X_0$), we see that $\ml$ can be assumed to be trivial. We can then see that $H^0(\Bbb{A}^1_X,\ml)=H^0(X_0,\mathcal{O})[t]$, and therefore the lift is unique. This allows us to patch.
\end{proof}
\subsection{Proof of Theorem  1.16}\label{diracsection}

The only remaining part of Theorem 1.16 (after the proof of Theorem \ref{shosty}) is the proof of the formula \eqref{sumq1} for induction.
We start with $(y_1,\dots,y_s)\times (0,\dots ,0)$ in $\Gamma_{r,\Bbb{Q}}\times \Gamma_{n-r,\Bbb{Q}}$ and write formulas for the image of the map \ref{sibelius} in $\mf_2$.  It suffices to give formulas for induction of these, since points of the form $(0,\dots,0)\times (z_1,\dots,z_s)$ can then be treated using duality on $\SL(\Bbb{C}^n)=\SL((\Bbb{C}^n)^*)$, which is equivariant for the morphisms $\Gr(r,\Bbb{C}^n)=\Gr(n-r,(\Bbb{C}^n)^*)$. We write
$$y_1=\killing(\lambda_1), \dots, y_s=\killing(\lambda_s)$$
where $\lambda_1,\dots,\lambda_s$ are dominant fundamental weights for $\operatorname{SL}(r)$. We assume (we may need to scale to avoid denominators)
\begin{equation}\label{assumption}
\sum |\lambda_i|=0.
\end{equation}

We need therefore to induce the line bundle \eqref{ams} with $\vec{\mu}=0$. The line bundles $L_{\lambda_i}$ break up into a tensor product of line bundles by Definition \ref{line}. The stated formulas are thus reduced  to formulas for the induction of some  natural line bundles on $\mathcal{S}$: Let $\ml_a(i)$ be the line bundle whose fiber at $x=(V,Q,F'_{\bull}(1), \dots,F'_{\bull}(s), F''_{\bull}(1),\dots,F''_{\bull}(s))$ is $F'_{a}(i)/F'_{a-1}(i)$, $a>0$, $i=1\dots,s$. We need to write $\ml_a(i)$
as the pull back of a line bundle on $\mb$, or to identify the pull back of this line bundle on $\mb$ to $\Fl(n)^s$.

 Now $\pi(\yosim-\aresim)$ is an open subset $U$ of $Z$ whose complement has codimension $\geq 2$. We base change the picture to $\Fl(n)^s$ via its natural map to $Z$: Let $\widetilde{Y}^0$  (respectively $\widetilde{R}$) be the base change of $\yosim\to Z$ (respectively $\aresim$).

 Let $\widetilde{U}\subset\Fl(n)^s$ be the inverse image of $U$. Note that $\widetilde{U}\cong \widetilde{Y}^0-\widetilde{R}$, and $\widetilde{Y}^0$ is a subset of $Y$ as defined in Notation \ref{listo}, but is larger than $Y^0$ there. In fact, $Y-\widetilde{Y}^0$ is of codimension $\geq 2$ in $Y$. Let $U_Y=\widetilde{Y}^0-\widetilde{R}\cong \widetilde{U}$. A point on $U_Y=\widetilde{Y}^0- \widetilde{R}$  parameterizes certain tuples $(V, F_{\bull}(1),F_{\bull}(2),\dots,F_{\bull}(s))$ (where the flags are full). Let $\mf_b(i)$ be the vector bundle on $U_Y$ with fibers $F_{b}(i), i=1,\dots,s$.

  Consider the line bundle $p^*\ml_a(i)$ on  $\yosim-\aresim$. Our aim is to write this line bundle in terms of pull backs of some natural line bundles on $\widetilde{U}$ (which has the same Picard group as $\Pic(\Fl(n)^s)$). Let $I_i=\{i_1<\dots<i_r\}$, let
 $\ell=i_a$, we consider two cases: Proposition \ref{dirac1} follows by assembling the formulas in these cases (and Definition \ref{line})
\subsubsection{Case $i_a=i_{a-1}+1$ with $i_0=0$}
In this case $p^*\ml_a(i)\leto{\sim}\mathcal{F}_{\ell}(i)/\mathcal{F}_{\ell-1}(i)$  by the evident map $F'_{a}(i) \subset F_{\ell}(i)\cap V$. The isomorphism is because the ranks $F_{i_{a}}(i)\cap V$ are not allowed to jump in the the definition of $\widehat{\Omega}^0_{I_i}(F_{\bull})$. Clearly
$\mathcal{F}_{\ell}(i)/\mathcal{F}_{\ell-1}(i)$ is the a pull back of a line bundle from $\Fl(n)^s/\SLL(n)$, and so we have achieved our aim.
\subsubsection{Case $i_a>i_{a-1} +1$}
In this case   $p^*\ml_a$ maps to $\mathcal{F}_{\ell}(i)/\mathcal{F}_{\ell-1}(i)$ but this map on $U_Y$  has a zero on the basic divisor $\widetilde{D}$ (restricted to $U_Y$) corresponding to $j_0=i$ and $a_0=a$. The order of the zero is one because
of Lemma \ref{water} below. Therefore, $p^*\ml_a(i)$ is isomorphic to  $\mathcal{F}_{\ell}(i)/\mathcal{F}_{\ell-1}(i)(-\widetilde{D})$ on $U_Y$.Now $\mathcal{O}_{U_Y}(-\widetilde{D})$ is also pulled back from $\mathcal{O}(D)$ on $\widetilde{U}\subseteq \Fl(n)^s$ and formulas are provided in Proposition \ref{egreggy}.

\begin{remark}\label{veritas}
Consider the line bundle  on $\mathcal{S}$ whose fiber at
$x=(V,Q,F'_{\bull}(1), \dots,F'_{\bull}(s), F''_{\bull}(1),\dots,F''_{\bull}(s))$
 is $\det F'_{r}(i)=\det V$. We have $s$ different formulas for the induction of this line bundle (description is on fibers, recall $F_0=0$): Consider for $i=1,\dots,s$
 $$\det V= \tensor_{a=1}^r F'_{a}(i)/F'_{a-1}(i)$$
 These $s$ formulas therefore produce the same answer as a triple of weights for $\SLL(n)$.
\end{remark}

\subsection{Order of vanishing}
Suppose  $I=\{i_1<\dots<i_r\}\subset [n]$. Pick $a\in I$ such that $a>1$ and $a-1\not\in I$, let $I'=(I-\{a\})\cup\{a-1\}$.
Let $F_{\bull}\in F(I)$. Consider the line bundle $\ml$ on $\widehat{\Omega}^0(F_{\bull})$ whose fiber at $V\in \widehat{\Omega}^0(F_{\bull})$ is $V\cap F_a/V\cap F_{a-2}$.  Pick  $\widetilde{F}_{\bull}\in \Fl(n) $ which maps to $F_{\bull}\in Fl(I)$. Consider the constant line bundle $\mathcal{N}$ on $\widehat{\Omega}^0(F_{\bull})$ with fibers given by $\widetilde{F}_{a}/\widetilde{F}_{a-1}$.
\begin{lemma}\label{water}
The natural map $\ml\to\mathcal{N}$  on  $\widehat{\Omega}^0(F_{\bull})$ has a zero of order $1$ along $\Omega_{I'}(\widetilde{F}_{\bull})\cap  \widehat{\Omega}^0(F_{\bull})\subseteq \Gr(r,n)$.
\end{lemma}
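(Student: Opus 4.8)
The plan is to regard the natural map $\ml\to\mathcal N$ as a section $\sigma$ of the line bundle $\shom(\ml,\mathcal N)=\ml^{-1}\tensor\mathcal N$ on the \emph{smooth} variety $\widehat\Omega^0_I(F_\bull)$, to identify its zero set with the prime divisor $Z:=\Omega_{I'}(\widetilde F_\bull)\cap\widehat\Omega^0_I(F_\bull)$, and then to pin down the order of vanishing by restricting $\sigma$ to one explicit test curve. Write $a=i_j$. Since $a-1\notin I$ one has $a-2\in T(I)$, so $F_{a-2}=\widetilde F_{a-2}$ is genuinely a member of the partial flag (so $\ml$ makes sense), and because $F_{a-2}\subseteq\widetilde F_{a-1}$ the composite $V\cap F_a\hookrightarrow\widetilde F_a\twoheadrightarrow\widetilde F_a/\widetilde F_{a-1}$ annihilates $V\cap F_{a-2}$ and descends to the asserted section $\sigma$.

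First I would describe the zero locus set-theoretically. On $\widehat\Omega^0_I(F_\bull)$ one has $\dim(V\cap F_a)=j$ and $\dim(V\cap F_{a-2})=j-1$, hence $\dim(V\cap\widetilde F_{a-1})\in\{j-1,j\}$, and $\sigma$ vanishes at $V$ exactly when $V\cap F_a\subseteq\widetilde F_{a-1}$, i.e. when $\dim(V\cap\widetilde F_{a-1})\geq j$. Since the remaining incidence conditions cutting out $\Omega_{I'}(\widetilde F_\bull)$ (namely $\dim(V\cap\widetilde F_{i_k})\geq k$ for $k\neq j$) hold automatically on $\widehat\Omega^0_I(F_\bull)$, this shows that the vanishing locus of $\sigma$ is $Z$ as a set. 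By Lemma \ref{plum}, $\Omega_{I'}(\widetilde F_\bull)$ is an irreducible codimension-one Schubert subvariety of $\Omega_I(\widetilde F_\bull)=\Omega_I(F_\bull)$, of which $\widehat\Omega^0_I(F_\bull)$ is a dense open subset, and $Z$ is nonempty since it contains the cell $\Omega^0_{I'}(\widetilde F_\bull)$ (recall $\Omega^0_{I'}(\widetilde F_\bull)\subseteq\widehat\Omega^0_I(F_\bull)$). Thus $Z$ is a reduced irreducible Cartier divisor on the smooth variety $\widehat\Omega^0_I(F_\bull)$, the divisor of zeros of $\sigma$ equals $mZ$ for some integer $m\geq 1$, and it remains only to show $m=1$.

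For the order computation I would fix a basis $e_1,\dots,e_n$ of $\Bbb C^n$ with $\widetilde F_b=\spane(e_1,\dots,e_b)$ and, for $t\in\Bbb C$, put
$$V_t=\spane\bigl(e_{i_1},\dots,e_{i_{j-1}},\ e_{a-1}+t\,e_a,\ e_{i_{j+1}},\dots,e_{i_r}\bigr)\in\Gr(r,n).$$
A direct check shows $V_t\in\Omega_I(\widetilde F_\bull)$ for all $t$, while $V_0=\spane(e_{i'_1},\dots,e_{i'_r})$ lies in $\Omega^0_{I'}(\widetilde F_\bull)\subseteq\widehat\Omega^0_I(F_\bull)$; since $\widehat\Omega^0_I(F_\bull)$ is open in $\Omega_I(\widetilde F_\bull)$ it follows that $V_t\in\widehat\Omega^0_I(F_\bull)$ for $t$ near $0$, and $V_0$ is a smooth point of $Z$. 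One then computes $V_t\cap\widetilde F_a=\spane(e_{i_1},\dots,e_{i_{j-1}},\,e_{a-1}+t\,e_a)$ and $V_t\cap\widetilde F_{a-2}=\spane(e_{i_1},\dots,e_{i_{j-1}})$ for every $t$, so $t\mapsto\overline{e_{a-1}+t\,e_a}$ trivialises $\ml$ along the curve and $\overline{e_a}$ trivialises the constant bundle $\mathcal N$. In these trivialisations $\sigma$ sends $\overline{e_{a-1}+t\,e_a}$ to the class of $e_{a-1}+t\,e_a$ in $\widetilde F_a/\widetilde F_{a-1}$, which is $t\,\overline{e_a}$ because $e_{a-1}\in\widetilde F_{a-1}$. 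Hence $\sigma$ restricted to this curve is the coordinate $t$: it is not identically zero (so the curve is not contained in $Z$) and it vanishes to order exactly $1$ at $t=0$. Writing $\sigma=u\,f^m$ near $V_0$ with $f$ a local equation of $Z$ and $u$ a unit, the restriction to the curve vanishes to order a multiple of $m$ that is $\geq m$, so $m\leq 1$ and therefore $m=1$.

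The only delicate part is the bookkeeping, not the geometry: one must check $a-2\in T(I)$ so that $\ml$ is defined, verify that all incidence conditions for $\Omega_{I'}$ other than $\dim(V\cap\widetilde F_{a-1})\geq j$ are automatic on $\widehat\Omega^0_I(F_\bull)$, and confirm that the test curve $V_t$ stays inside $\widehat\Omega^0_I(F_\bull)$ near $t=0$ and passes through a smooth point of $Z$ (all of which handle the edge case $j=1$, where the list $e_{i_1},\dots,e_{i_{j-1}}$ is empty, uniformly). Granting these, the trivialisations above make the order-one statement immediate.
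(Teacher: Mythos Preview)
Your argument is correct. You correctly identify $\sigma$ as a section of $\ml^{-1}\tensor\mathcal N$, verify its set-theoretic zero locus is $Z$, and then use an explicit test curve through a smooth point of $Z$ to pin down the multiplicity as $1$; the bookkeeping you flag (that $a-2\in T(I)$, that the other Schubert incidences for $I'$ are automatic, and that the curve stays in $\widehat\Omega^0_I$ near $t=0$) is handled properly.

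The paper's route is different in packaging: it simply invokes the functor-of-points description of Schubert varieties as degeneracy loci (citing \cite[Appendix A]{BGHorn}). In that language, $\Omega_{I'}(\widetilde F_\bull)$ inside the relevant open of $\Omega_I$ is \emph{scheme-theoretically} the degeneracy locus of the rank-one morphism $\ml\to\mathcal N$, and reducedness of Schubert varieties then forces the vanishing order to be $1$. Your approach trades this appeal to general theory for an explicit transversal curve computation; it is more elementary and entirely self-contained, at the cost of a few lines of coordinate calculation. The paper's approach is shorter on the page but presupposes the reader is comfortable with the degeneracy-locus formalism and the cited appendix. Either way, the content is the same: the section cuts out $Z$ with its reduced structure.
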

\begin{proof}
This is follows from the functor of points description of Schubert varieties as degeneracy loci (see e.g., \cite[Appendix A]{BGHorn}).
\end{proof}

\subsection{Proof of Theorem 5.1}
We have now completed the proof of Theorem \ref{egregium} following the outline given in Section \ref{bellow}.
\subsection{Comparison with Ressayre's work}\label{compare}The induction described in Section \ref{defInd} is inspired by an induction mechanism in \cite[Section 4.1]{R1}.  Ressayre works in a very general setting (of a branching problem) similar to that of  the diagram \eqref{basicdiagram1}, but with $\Fl(n)^s/\SLL(n)$ replacing $\mb=(\Fl(I_1)\times\dots\times \Fl(I_s))/\SLL(n)$ (so Ressayre's setting is more like in the diagram \eqref{basicdiagram}). Given a line bundle $\ml$ on $\ms$, he looks for an arbitrary line bundle $\ml'$  on $\Fl(n)^s/\SLL(n)$ which restricts to $\ml$ under $i'$ (there are many ways of doing this because of the center of the group $H$ defined by \eqref{gruppe}). It seems difficult to run this extension operation with $\Fl(n)^s/\SLL(n)$ replaced by $\mb$ (because we would need to extend so that certain ``eigenvalues'' coincide). He then propagates a non-zero section of $\ml$ to $\ml'$ with possible poles; the location and multiplicity of the poles may depend upon the section chosen. The possible poles of sections  can be seen to be supported on a union of our basic divisors from Definition \ref{basico} (which produce extremal rays). More precisely, these loci can be seen to correspond to  $E_j$ considered in \cite[Section 4.1]{R1} for an optimal choice of $X^o$ (as in loc. cit.).

Our $\operatorname{Ind}(\ml)$ is produced canonically by working with partial flag varieties. We have seen that sections extend without poles, and the process is entirely explicit.
\section{Complements}
We  have not used that $\mfq$ is a facet (we have only used that it is a face, possibly $0$) of $\gammanq$. Let $q$ be the number of type I extremal rays of $\mf$.
\begin{lemma}\label{counta}
\begin{enumerate}
\item $\Bbb{Z}^q\oplus \Pic(\mathcal{B})
=\Pic(\Fl(n)^s/\SLL(n))$.
\item $\Bbb{Z}^q\oplus \Pic^{\deg =0}(\mathcal{B})
=\Pic^{\deg =0}(\Fl(n)^s/\SLL(n))$.
\end{enumerate}
\end{lemma}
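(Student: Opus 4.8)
The plan is to deduce both identities from a single direct-sum splitting of the Picard lattice of $\mathcal{A}=\Fl(n)^s/\SLL(n)$, obtained by coordinatizing it with the fundamental weights. Throughout, ``$\Pic$'' denotes the full Picard lattice (not the effective cone), consistently with the group-theoretic statement of the lemma, so Lemma \ref{list} is used in its full-rank form; and $q$ is taken as the number of pairs $(j_0,a_0)$ with $a_0\in I_{j_0}$, $a_0>1$, $a_0-1\notin I_{j_0}$, which by Theorem \ref{two} is the number of type I extremal rays of $\mf$.

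First I would coordinatize $\Pic(\mathcal{A})$. By Section \ref{repeat} it is the group of $s$-tuples $(\lambda_1,\dots,\lambda_s)$ of weights of $\SLL(n)$; writing $\lambda_i=\sum_{b=1}^{n-1}c_{i,b}\omega_b$, i.e.\ $c_{i,b}=\lambda_i^{(b)}-\lambda_i^{(b+1)}$, identifies $\Pic(\mathcal{A})\cong(\Bbb{Z}^{n-1})^s$ with coordinate functions the $c_{i,b}$. To the type I extremal ray --- equivalently, the basic divisor $D(I_1,\dots,I_{j_0}^{+,a_0},\dots,I_s)$ --- indexed by a pair $(j_0,a_0)$ I attach the coordinate $c_{j_0,\,a_0-1}$. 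These $q$ coordinates are pairwise distinct (pairs with different first index lie in different factors; pairs with the same first index have different $a_0$, hence different $a_0-1$) and $1\le a_0-1\le n-1$, so each is a genuine coordinate of $(\Bbb{Z}^{n-1})^s$. Lemma \ref{list} together with Remark \ref{forcing} then says precisely that $\Pic(\mathcal{B})\subseteq\Pic(\mathcal{A})$ is the sublattice on which all $q$ of these coordinates vanish; equivalently $\Pic(\mathcal{B})=\ker\bigl(\pr\colon\Pic(\mathcal{A})\twoheadrightarrow\Bbb{Z}^q\bigr)$, where $\pr$ records the coordinates $c_{j_0,\,a_0-1}$.

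Next I would check that the $q$ basic line bundles $\mathcal{O}(D(j_0,a_0))$ (Definition \ref{basico}) split $\pr$. Running Proposition \ref{egreggy} exactly as in the proofs of Theorem \ref{two} and Lemma \ref{shew}: for the divisor attached to $(j_0,a_0)$ one has $A_{j_0}^{-,\,a_0-1}=I_{j_0}$, so case (3) of Proposition \ref{egreggy} gives $c_{j_0,\,a_0-1}=\sigma_{I_1}\cdots\sigma_{I_s}=1$ by \eqref{dagger}; and for any other type I pair $(j_0',a_0')$ the index $a_0'-1$ lies outside the corresponding set $A_{j_0'}$, so case (1) of Proposition \ref{egreggy} gives $c_{j_0',\,a_0'-1}=0$. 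Hence the $q\times q$ matrix of the $c_{j_0,\,a_0-1}$-coordinates of the $\mathcal{O}(D(j_0,a_0))$ is the identity, so $\pr$ restricts to an isomorphism $\langle\mathcal{O}(D(j_0,a_0))\rangle\leto{\sim}\Bbb{Z}^q$; in particular these $q$ classes are linearly independent (re-deriving Lemma \ref{shew}) and
\[
\Pic(\mathcal{A})=\langle\,\mathcal{O}(D(j_0,a_0))\,\rangle\ \oplus\ \Pic(\mathcal{B}),
\]
which is part (1), with $\Bbb{Z}^q$ realized as the span of the type I extremal-ray line bundles.

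Finally, for part (2), I would intersect this splitting with the hyperplane $\Pic^{\deg=0}(\mathcal{A})\subseteq\Pic(\mathcal{A})$ of tuples $(\lambda_1,\dots,\lambda_s)$ for which $(\killing(\lambda_1),\dots,\killing(\lambda_s))$ satisfies equality in \eqref{evineq}. By Theorem \ref{one}(2) each basic extremal ray lies on the facet $\mf_{\Bbb{Q}}$, so $\langle\mathcal{O}(D(j_0,a_0))\rangle\subseteq\Pic^{\deg=0}(\mathcal{A})$; the elementary fact that $L\subseteq H\subseteq M$ with $M=L\oplus N$ forces $H=L\oplus(H\cap N)$ then yields $\Pic^{\deg=0}(\mathcal{A})=\langle\mathcal{O}(D(j_0,a_0))\rangle\oplus\bigl(\Pic^{\deg=0}(\mathcal{A})\cap\Pic(\mathcal{B})\bigr)=\Bbb{Z}^q\oplus\Pic^{\deg=0}(\mathcal{B})$, the last equality by the definition of $\Pic^{\deg=0}(\mathcal{B})$. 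I do not expect a genuine obstacle: the only real content is the index bookkeeping in the third step --- verifying that the $q$ distinguished coordinates are distinct and that Proposition \ref{egreggy} makes the transition matrix the identity --- which simply repackages computations already made for Theorem \ref{two} and Lemma \ref{shew}.
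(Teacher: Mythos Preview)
Your proof is correct and follows essentially the same approach as the paper: both arguments use the coordinates from Proposition \ref{egreggy} (as in Lemma \ref{shew}) to show that subtracting suitable integer multiples of the type I classes $\mathcal{O}(D(j_0,a_0))$ from an arbitrary $(\lambda_1,\dots,\lambda_s)$ forces the conditions of Lemma \ref{list}, and then appeal to Theorem \ref{one}(2) for part (2). The paper compresses this into two sentences, while you make the splitting explicit by exhibiting $\pr$ and verifying that the transition matrix is the identity; this is a cleaner presentation of the same content rather than a different argument.
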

\begin{proof}
Given $(\lambda_1,\dots,\lambda_s)\in \Pic(\Fl(n)^s/\SLL(n))$, we can add a multiple of the classes of type one rays of $\mfq$ to make sure that the resulting triple satisfies the conditions
in the Lemma \ref{list}. This shows (1). All type I extremal rays gives rays of $\mfq$ which satisfy equality in \eqref{evineq}. Therefore (2) follows.
\end{proof}
%Call a polyhedral cone in a rational finite dimensional vector space $V$ {\em full} if its $\Bbb{Q}$-span is $V$.
\begin{proposition}\label{newo}
$\Pic^{\deg =0,+}_{\Bbb{Q}}(\mathcal{B})$ spans
$\Pic^{\deg =0}_{\Bbb{Q}}(\mathcal{B})$.
\end{proposition}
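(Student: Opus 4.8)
The plan is to reduce the statement, through the induction isomorphism of Theorem \ref{shosty}, to the classical fact that the eigencones of the smaller groups $\operatorname{SL}(r)$ and $\operatorname{SL}(n-r)$ are full‑dimensional. I would proceed in three stages, working always with $\Bbb{Q}$‑vector spaces and the cones sitting inside them, and keeping track that every identification used is genuinely $\Bbb{Q}$‑linear (not merely a bijection of cones), so that ``spanning'' is preserved at each step.

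First, recall from Theorem \ref{shosty}(1) that induction gives a linear isomorphism $\Pic^{\deg=0}_{\Bbb{Q}}(\ms-R_{\ms})\xrightarrow{\sim}\Pic^{\deg=0}_{\Bbb{Q}}(\mathcal{B})$, and from Theorem \ref{shosty}(3) that under it the cone $\Pic^{+}_{\Bbb{Q}}(\ms-R_{\ms})$ is carried onto $\Pic^{\deg=0,+}_{\Bbb{Q}}(\mathcal{B})$. Since the span of the image of a cone under a linear isomorphism is the image of its span, it suffices to show that $\Pic^{+}_{\Bbb{Q}}(\ms-R_{\ms})$ spans $\Pic^{\deg=0}_{\Bbb{Q}}(\ms-R_{\ms})$. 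Next I would invoke Proposition \ref{bloodtest} (and its proof): the extension‑of‑line‑bundles map $\Pic_{\Bbb{Q}}(\ms)\to\Pic_{\Bbb{Q}}(\ms-R_{\ms})$ is a surjective linear map, it preserves the subspaces on which the $\Bbb{C}^*$‑action of Definition \ref{longy} is trivial, hence restricts to a surjection $\Pic^{\deg=0}_{\Bbb{Q}}(\ms)\twoheadrightarrow\Pic^{\deg=0}_{\Bbb{Q}}(\ms-R_{\ms})$, and it carries $\Pic^{+}_{\Bbb{Q}}(\ms)$ onto $\Pic^{+}_{\Bbb{Q}}(\ms-R_{\ms})$. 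A surjective linear map sends a spanning subset to a spanning subset, so it is enough to prove that $\Pic^{+}_{\Bbb{Q}}(\ms)$ spans $\Pic^{\deg=0}_{\Bbb{Q}}(\ms)$.

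Finally, by Proposition \ref{stbe}(a),(b) there is a linear identification of $\Pic^{\deg=0}_{\Bbb{Q}}(\ms)$ with $\frh_{r,\Bbb{Q}}^s\times\frh_{n-r,\Bbb{Q}}^s$ under which $\Pic^{+}_{\Bbb{Q}}(\ms)$ corresponds to $\Gamma_{r,\Bbb{Q}}(s)\times\Gamma_{n-r,\Bbb{Q}}(s)$. Thus the claim becomes: $\Gamma_{r,\Bbb{Q}}(s)\times\Gamma_{n-r,\Bbb{Q}}(s)$ spans $\frh_{r,\Bbb{Q}}^s\times\frh_{n-r,\Bbb{Q}}^s$, i.e.\ each factor $\Gamma_{m,\Bbb{Q}}(s)$ is a full‑dimensional cone in $\frh_{m,\Bbb{Q}}^s$ (for $m=r$ and $m=n-r$). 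This is the classical nonemptiness of the interior of the eigencone for $s\geq 3$, see e.g.\ \cite{FBulletin}; it is trivially true when $m=1$, where $\frh_{1,\Bbb{Q}}=0$. Applying it to both factors finishes the argument.

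The steps are essentially bookkeeping, and the only genuinely nontrivial input is the classical full‑dimensionality of $\Gamma_m(s)$; the part requiring the most care is confirming that each map quoted from Theorem \ref{shosty}, Proposition \ref{bloodtest}, and Proposition \ref{stbe} is $\Bbb{Q}$‑linear on the ambient vector spaces (for Proposition \ref{bloodtest} this amounts to using, as in its proof, that $\Pic(\ms)\to\Pic(\ms-R_{\ms})$ is surjective on the full Picard groups, not just on effective cones). An equivalent alternative is to note directly from the formula \eqref{sumq1} in Theorem \ref{dirac1} that $\operatorname{Ind}$ is the restriction of a surjective linear map $\frh_{r,\Bbb{Q}}^s\times\frh_{n-r,\Bbb{Q}}^s\twoheadrightarrow\Pic^{\deg=0}_{\Bbb{Q}}(\mathcal{B})$, and then feed in the same full‑dimensionality fact; I would present the route through $\ms$ since each surjectivity statement it uses is already recorded above.
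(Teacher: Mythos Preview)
Your proof is correct and follows essentially the same route as the paper: reduce via Theorem~\ref{shosty}(1),(3) to $\Pic^{+}_{\Bbb{Q}}(\ms-R_{\ms})$ spanning $\Pic^{\deg=0}_{\Bbb{Q}}(\ms-R_{\ms})$, then via the surjection of Proposition~\ref{bloodtest} to $\Pic^{+}_{\Bbb{Q}}(\ms)$ spanning $\Pic^{\deg=0}_{\Bbb{Q}}(\ms)$, and finally via Proposition~\ref{stbe} to the full-dimensionality of $\Gamma_m(s)$ for $m=r,n-r$. Your version is in fact somewhat more careful than the paper's, which cites Theorem~\ref{shosty}(3) at the second reduction where (as you correctly identify) Proposition~\ref{bloodtest} is what is needed.
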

\begin{proof}
By Theorem \ref{shosty}, it is sufficient to show that $\Pic^{+}_{\Bbb{Q}}(\ms-R_{\ms})$ spans $\Pic^{\deg =0}_{\Bbb{Q}}(\ms-R_{\ms})$. This is implied, by Theorem \ref{shosty} (3), by  $\Pic^{+}_{\Bbb{Q}}(\ms)$ spanning $\Pic^{\deg =0}_{\Bbb{Q}}(\ms)$, which is equivalent to $\Gamma_n(s)$ being open in $\frh_{+,n}^s$ (applied to $r$ and $n-r$).
This is well known (see the discussion following Proposition 7 in \cite{FBulletin}).
\end{proof}
Proposition \ref{newo} proves (using Proposition \ref{secondthing}, and Lemma \ref{counta}) that $\mfq=\Bbb{Q}_{\geq 0}^q\oplus \mfqtwo$ is a codimension one face of $\gammanq$ (i.e., a facet) reproving the result of \cite{KTW}.

Recall that $\ml\in \Pic(\ms)$ is a line bundle on  $\Fl(V)^s\times \Fl(Q)^s$ which is equivariant for the action of the group $H$ (defined in \eqref{gruppe}) on $\Fl(V)^s\times \Fl(Q)^s$. The irreducible components of $R$ on $\Fl(V)^s\times \Fl(Q)^s$ (the inverse image of $R_{\ms}$) are invariant under the action of $H$ (see Remark \ref{remmy} below), therefore if these irreducible components are listed as $R_1,\dots,R_c$,  we obtain line bundles $\mathcal{O}(R_i)$ which are all $H$ linearized. Clearly these line bundles  lie in the kernel of the map
$\Pic(\ms)\to \Pic(\ms-R_{\ms})$. In fact, they give a basis:
\begin{proposition}\label{bee131}
\begin{enumerate}
\item $\mathcal{O}(R_i), i=1,\dots,c$ give a $\Bbb{Z}$-basis for the kernel of  $\Pic(\ms)\to \Pic(\ms-R_{\ms})$.
\item $\mathcal{O}(R_i),i=1,\dots,c$ give extremal rays of $\Gamma_{r,\Bbb{Q}}\times \Gamma_{n-r,\Bbb{Q}}\cong \Pic^+_{\Bbb{Q}}(\ms)$ (see
Proposition \ref{stbe}).
\item An extremal ray of $\Gamma_{r,\Bbb{Q}}\times \Gamma_{n-r,\Bbb{Q}}\cong \Pic^+_{\Bbb{Q}}(\ms)$ is generated by some  $\mathcal{O}(R_i)$ if and only if it inducts to zero under the induction map \eqref{sibelius}
\end{enumerate}
\end{proposition}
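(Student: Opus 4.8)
The plan is to prove (1)--(3) simultaneously by analysing the rational subcone $\mathcal{C}:=\sum_{i=1}^c\Bbb{Q}_{\geq 0}\,\mathcal{O}(R_i)\subseteq\Pic^+_{\Bbb{Q}}(\ms)$. Everything rests on the rigidity statement proved in Step (a) of Section \ref{may11proof}, namely $h^0(\ms,\mathcal{O}(mR_{\ms}))=1$ for all $m\geq 0$. Equivalently (taking $R_{\ms}$ with its reduced structure, as one should, so that $\operatorname{supp}(R_{\ms})=\bigcup_iR_i$), for every $m$ the divisor $mR_{\ms}$ is the \emph{unique} effective divisor in its linear equivalence class: any effective $E\sim mR_{\ms}$ gives $\mathcal{O}(E)\cong\mathcal{O}(mR_{\ms})$ with the two defining sections proportional, hence $E=mR_{\ms}$.

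The technical core I would isolate first is an \textbf{interval principle}: if $L\in\Pic^+_{\Bbb{Q}}(\ms)$ and $k\,\mathcal{O}(R_{\ms})\otimes L^{-1}\in\Pic^+_{\Bbb{Q}}(\ms)$ for some positive rational $k$, then $L\in\mathcal{C}$. Indeed, after clearing denominators one writes $L^{\otimes N}=\mathcal{O}(A)$ and $(k\,\mathcal{O}(R_{\ms}))^{\otimes N}\otimes L^{-\otimes N}=\mathcal{O}(B)$ with $A,B$ effective (divisors of sections) and $kN\in\Bbb{Z}$; then $A+B\sim kNR_{\ms}$ is effective, hence equals $kNR_{\ms}$ by the uniqueness above, so $A\leq kNR_{\ms}$ is supported on $\bigcup_iR_i$, giving $A=\sum a_iR_i$ with $a_i\geq 0$ and $L=\tfrac1N\sum a_i\mathcal{O}(R_i)\in\mathcal{C}$. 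I expect establishing this lemma (and pinning down reducedness of $R_{\ms}$) to be the main obstacle; the rest is cone geometry and a standard excision sequence.

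Granting the interval principle: for part (1), generation is the excision exact sequence $\bigoplus_i\Bbb{Z}\,\mathcal{O}(R_i)\to\Pic(\ms)\to\Pic(\ms-R_{\ms})\to 0$ for the smooth stack $\ms$ (the $R_i$ are precisely the components of $R_{\ms}$, the $H$-invariance being the content of Remark \ref{remmy}); linear independence follows by splitting a relation $\sum a_i\mathcal{O}(R_i)=0$ into positive and negative parts to get effective $D^+\sim D^-$ with disjoint supports, both $\leq CR_{\ms}$ for $C\gg 0$, whence $D^-+(CR_{\ms}-D^+)\sim CR_{\ms}$ is effective, hence $=CR_{\ms}$, forcing $D^+=D^-=0$. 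For part (2), the interval principle shows $\mathcal{C}$ is a \emph{face} of $\Pic^+_{\Bbb{Q}}(\ms)$: if $L+L'=\sum a_i\mathcal{O}(R_i)$ with $L,L'$ effective and $a_i\geq 0$, then $L\leq(\max_ia_i)\mathcal{O}(R_{\ms})$, so $L\in\mathcal{C}$ and likewise $L'$. Since $\mathcal{C}$ is moreover simplicial by part (1), its extremal rays are exactly the $\Bbb{Q}_{\geq 0}\,\mathcal{O}(R_i)$, and extremal rays of a face are extremal rays of the ambient cone, which together with Proposition \ref{stbe} yields (2).

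Finally, for part (3): the map \eqref{sibelius} is $\Pic^+_{\Bbb{Q}}(\ms)\twoheadrightarrow\Pic^+_{\Bbb{Q}}(\ms-R_{\ms})$ followed by two isomorphisms, so an element of $\Pic^+_{\Bbb{Q}}(\ms)$ inducts to $0$ iff it lies in $\ker(\Pic_{\Bbb{Q}}(\ms)\to\Pic_{\Bbb{Q}}(\ms-R_{\ms}))=\operatorname{span}_{\Bbb{Q}}\{\mathcal{O}(R_i)\}$ by part (1). By the interval principle, $\operatorname{span}_{\Bbb{Q}}\{\mathcal{O}(R_i)\}\cap\Pic^+_{\Bbb{Q}}(\ms)=\mathcal{C}$ (an effective $\Bbb{Q}$-combination of the $\mathcal{O}(R_i)$ becomes $\leq CR_{\ms}$ after adding a large multiple of $\mathcal{O}(R_{\ms})$, so its coefficients are already $\geq 0$). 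Hence an extremal ray inducts to $0$ iff it lies in the simplicial face $\mathcal{C}$, iff it is one of the $\Bbb{Q}_{\geq 0}\,\mathcal{O}(R_i)$; conversely each $\mathcal{O}(R_i)$ visibly lies in the kernel, hence inducts to $0$.
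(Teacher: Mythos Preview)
Your proof is correct and rests on the same core input as the paper's, namely the Fulton-type rigidity $h^0(\ms,\mathcal{O}(mR_{\ms}))=1$ established in Step~(a) of Section~\ref{may11proof}. The organization differs in two minor but pleasant ways. For (1), you invoke the excision sequence directly on the smooth stack $\ms$, which is cleaner than the paper's explicit bookkeeping with $H$-linearizations on $\Fl(V)^s\times\Fl(Q)^s$ (the paper has to check that the $H$-linearization on $\mathcal{O}(\sum m_iR_i)$ agrees with the given one, using the $\deg=0$ condition). For (2), the paper argues directly that each $\mathcal{O}(R_i)$ satisfies the rigidity hypothesis of Lemma~\ref{Fproperty} (since $R_i$ is a component of $R_{\ms}$, the one-dimensionality of $H^0(\ms,\mathcal{O}(mR_i))$ is inherited from that of $H^0(\ms,\mathcal{O}(mR_{\ms}))$), whereas you package this as your ``interval principle'' and deduce that the whole simplicial cone $\mathcal{C}=\sum\Bbb{Q}_{\geq 0}\mathcal{O}(R_i)$ is a \emph{face} of $\Pic^+_{\Bbb{Q}}(\ms)$; extremality of the $\mathcal{O}(R_i)$ then follows from cone geometry. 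Both routes are short, and your face formulation has the advantage of immediately yielding the equality $\operatorname{span}_{\Bbb{Q}}\{\mathcal{O}(R_i)\}\cap\Pic^+_{\Bbb{Q}}(\ms)=\mathcal{C}$ needed in (3), which the paper phrases instead as ``$\mathcal{O}(\sum m_iR_i)$ has an invariant section iff all $m_i\geq 0$''.
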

\begin{proof}
 We show first that they span in (1): If a line bundle on $\ms$ has a section on $\ms-R_{\ms}$, then the $H$ equivariant line bundle $\ml$ on $\Fl(V)^s\times \Fl(Q)^s$  has a section over $\Fl(V)^s\times \Fl(Q)^s-R$. Therefore the line bundle is isomorphic to $\mathcal{O}(\sum m_i R_i)$ which comes with a $H$-linearization. We show that this linearization agrees
with the one we started with on $\ml$. This is true because both have sections on $\mathcal{S}-R_{\ms}$ and hence satisfy the  $\deg=0$ condition (see Section \ref{lastthing}).

We show that they are linearly independent in (1). If
$\ml=\mathcal{O}(\sum_{i\in A} m_iR_i)= \mathcal{O}(\sum_{j\in B} n_jR_j)$ as $H$ equivariant line bundles, then they are equal also as $\SL(V)\times \SL(Q)$ equivariant line bundles. Here $m_i$ and $n_j$ are positive integers and $A,B$ disjoint non-empty subsets of $\{1,\dots,c\}$.
But the description of $\ml$ shows that it has two linearly independent invariant sections, and hence $\mathcal{O}(mR)$ has at least two linearly independent invariant sections for large $m$ contradicting Fulton's conjecture using the identification
of the line bundle $\mathcal{O}(R)$ in \cite{belkaleIMRN} (see Definition \ref{seeB}, and the last part of the Proof of Theorem \ref{may11} in Section \ref{may11proof}).

(2) follows from the method of proof of Lemma \ref{Fproperty}, using
the fact that $\mathcal{O}(mR_i)$ has exactly one invariant section for all all $m\geq 0$ (since $R_i$ is an irreducible component of $R$ which has this property). Theorem \ref{shosty}(3) gives the third conclusion, note that $\mathcal{O}(\sum m_i R_i)$ has an invariant section if and only if $m_i\geq 0$ (because if say $m_1,\dots,m_s$ are negative, and the rest non-negative  $\mathcal{O}(\sum_{i>s}m_iR_i)$ will have at least two linearly independent  invariant sections).
\end{proof}
\begin{corollary}
 Let $q$ be the number of type I extremal rays of $\mfq$, and $c$ the number of connected components of ${R}_{\ms}$. Then $c=q-s+1$.
\end{corollary}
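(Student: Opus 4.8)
The plan is to extract the relation $c = q - s + 1$ from a rank count of the two decompositions furnished by Proposition \ref{bee131}(1) and Lemma \ref{counta}, together with the description of the Picard group of $\ms$ in Proposition \ref{stbe}(b) and Lemma \ref{office}. Everything reduces to computing the rank of $\Pic^{\deg=0}(\ms)$ in two ways: once via the Picard group of $\ms-R_{\ms}$ plus the kernel contributed by the ramification divisors $R_i$, and once via the induction isomorphism onto $\Pic^{\deg=0}(\mb)$ together with the $q$ type I rays.

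\textbf{Step 1.} First I would record that the number of connected components $c$ of $R_{\ms}$ equals the number of irreducible components $R_1,\dots,R_c$ of the inverse image $R$ in $\Fl(V)^s\times\Fl(Q)^s$ used in Proposition \ref{bee131}: each $R_i$ is $H$-invariant and irreducible, and the map $\Fl(V)^s\times\Fl(Q)^s\to\ms$ is a gerbe-like quotient, so irreducible components downstairs and upstairs correspond bijectively. Thus ``number of connected components of $R_{\ms}$'' and the integer $c$ appearing in Proposition \ref{bee131} are the same.

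\textbf{Step 2.} Next I would assemble the rank identity. From Proposition \ref{bee131}(1), the $\mathcal{O}(R_i)$ form a $\Bbb{Z}$-basis for the kernel of $\Pic(\ms)\to\Pic(\ms-R_{\ms})$, and since each $\mathcal{O}(R_i)$ lies in $\Pic^{\deg=0}(\ms)$ (it has an invariant section, forcing the $\deg=0$ condition, as in the proof of \ref{bee131}(1)), this is also the kernel of $\Pic^{\deg=0}(\ms)\to\Pic^{\deg=0}(\ms-R_{\ms})$; by Proposition \ref{bloodtest} this last map is surjective, giving
\[
\rk\Pic^{\deg=0}(\ms) = c + \rk\Pic^{\deg=0}(\ms-R_{\ms}).
\]
Now Theorem \ref{shosty}(1) identifies $\Pic^{\deg=0}(\ms-R_{\ms})$ with $\Pic^{\deg=0}(\mb)$, and Lemma \ref{counta}(2) gives $\rk\Pic^{\deg=0}(\Fl(n)^s/\SLL(n)) = q + \rk\Pic^{\deg=0}(\mb)$. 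Combining,
\[
\rk\Pic^{\deg=0}(\ms) = c + \rk\Pic^{\deg=0}(\Fl(n)^s/\SLL(n)) - q.
\]

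\textbf{Step 3.} It remains to compute $\rk\Pic^{\deg=0}(\ms)$ and $\rk\Pic^{\deg=0}(\Fl(n)^s/\SLL(n))$ directly and see their difference is $s-1$. By Section \ref{repeat}, $\Pic(\Fl(n)^s/\SLL(n))$ is the lattice of $s$-tuples of weights of $\SLL(n)$, of rank $(n-1)s$; the condition $\deg=0$ (equality in \eqref{evineq}) is one linear condition, so $\rk\Pic^{\deg=0}(\Fl(n)^s/\SLL(n)) = (n-1)s - 1$. By Proposition \ref{stbe}(b), $\Pic^{\deg=0}_{\Bbb{Q}}(\ms)\cong\Pic_{\Bbb{Q}}(\Fl(r)^s\times\Fl(n-r)^s)$, which has rank $(r-1)s + (n-r-1)s = (n-2)s$. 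Therefore
\[
\rk\Pic^{\deg=0}(\ms) - \rk\Pic^{\deg=0}(\Fl(n)^s/\SLL(n)) = (n-2)s - \big((n-1)s - 1\big) = 1 - s,
\]
and substituting into the identity of Step 2 yields $1 - s = c - q$, i.e., $c = q - s + 1$.

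\textbf{The main obstacle} I anticipate is being careful about whether the various Picard groups are taken over $\Bbb{Z}$ or $\Bbb{Q}$ and whether ``rank'' is well-defined (no torsion issues) — but all the groups here are free abelian of finite rank, and the maps in Proposition \ref{bee131}(1), Lemma \ref{counta}, and Theorem \ref{shosty}(1) are stated at the integral level, so tensoring with $\Bbb{Q}$ and taking dimensions is harmless. A secondary point requiring a line of justification is that $\Pic^{\deg=0}(\ms)\to\Pic^{\deg=0}(\ms-R_{\ms})$ is genuinely surjective with the $\mathcal{O}(R_i)$ spanning the full kernel inside the $\deg=0$ sublattice (not merely inside $\Pic(\ms)$); this follows because, as noted, each $\mathcal{O}(R_i)$ already satisfies $\deg=0$, so the kernel of the $\deg=0$ map is contained in — hence equal to — the span of the $\mathcal{O}(R_i)$.
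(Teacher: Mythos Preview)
Your proof is correct and follows essentially the same approach as the paper: a dimension count combining Lemma \ref{counta}(2), the isomorphism $\Pic^{\deg=0}(\ms-R_{\ms})\cong\Pic^{\deg=0}(\mb)$ from Theorem \ref{shosty}(1), the kernel of rank $c$ from Proposition \ref{bee131}(1), and the direct computations $\dim\Pic^{\deg=0}_{\Bbb{Q}}(\ms)=s(n-2)$ and $\dim\Pic^{\deg=0}_{\Bbb{Q}}(\Fl(n)^s/\SLL(n))=s(n-1)-1$. Your write-up is in fact more careful than the paper's (which contains a typo, writing $3n-2s-c$ where $s(n-2)-c$ is meant), and your Step 1 and your closing remarks on the $\deg=0$ condition for $\mathcal{O}(R_i)$ make explicit points the paper leaves implicit.
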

\begin{proof} We count dimensions in Lemma \ref{counta}(2) and use
$\dim\Pic^{\deg =0}_{\Bbb{Q}}(\mathcal{B})=\dim\Pic^{\deg= 0}_{\Bbb{Q}}(\ms-R_{\ms})= \dim\Pic^{\deg= 0}_{\Bbb{Q}}(\ms)-c$ which we calculate to be equal to $s(r-1+n-r-1)-c=3n-2s-c$,
while $\dim\Pic^{\deg =0}_{\Bbb{Q}}(\Fl(n)^s) =s(n-1)-1$.
\end{proof}
\begin{remark}\label{remmy}
If a connected group acts on an algebraic variety $X$ and $R$ is a $G$-stable divisor, then every irreducible component of
$R$ is also stable under $G$ (Proof: Delete all pairwise intersections of irreducible components of $R$ from $R$ and consider the induced action of $G$ on this variety).
\end{remark}
\begin{remark}\label{stingray}
Call a ray $\Bbb{Q}_{\geq 0}(\killing(\lambda_1),\dots,\killing(\lambda_s))\subseteq\gammanq$ a F-ray if the rank of $H^0(\Fl(n)^s,  L_{m\lambda_1}\boxtimes L_{m\lambda_2}\boxtimes\dots\boxtimes L_{m\lambda_s})^{SL(n)}$ is one for all sufficiently divisible $m$. Theorem
\ref{one}, shows that type I extremal rays (i.e., our basic extremal rays of $\gammanq$) of $\mfq$ are F-rays. The induction of
a F-ray need not necessarily be a F-ray. This is because if $\ml\in\Pic^{+}_{\Bbb{Q}}(\mathcal{S})$, then the pull back of $\operatorname{Ind}(\ml)$ under $i'$ only agrees with $\ml$ outside of $R_{\mathcal{S}}$. Therefore the pull back is $\ml'(R')$ such that
$R'$ is a Cartier divisor (possibly negative, positive or zero) on $\mathcal{S}$ supported on $R_{\mathcal{S}}$. Therefore not all extremal rays of $\gammanq$ need to be F-rays (see Section \ref{Reg} for an example).
\end{remark}

\section{Examples}\label{tonne}
We examine some examples when $s=3$ for various values of $n$.
\subsection{For $n=2$}
 We have only one type of facet $\mf$ up to symmetry: Given by $r=1$, $I_1=\{1\}, I_2=I_3=\{2\}$. The basic extremal rays produced correspond to $j_0=2$ or $j_0=3$, and $a_0=2$. These produce the two extremal rays (using identifications $\killing$), and Proposition \ref{egreggy} $\Bbb{Q}_{\geq 0}(\omega_1,\omega_1,0)$, and $\Bbb{Q}_{\geq 0}(\omega_1,0,\omega_1)$. Therefore all in all, we get $3$ extremal rays, and each one lies on two regular facets. There is no induction in this example because $\Gamma_1(s)$ is trivial.
\subsection{For $n=3$}
We first list the basic extremal rays for $r=1$: The facet correspond to $I_1=\{1\}, I_2=I_3=\{3\}$. The basic extremal rays produced correspond to $j_0=2$ or $j_0=3$, and $a_0=2$. These two extremal rays are given by
$\Bbb{Q}_{\geq 0}(\omega_1,\omega_2,0)$, and $\Bbb{Q}_{\geq 0}(\omega_1,0,\omega_2)$. We get $6$ rays of this form.

The remaining choice for extremal rays for  $r=1$ is $I_1=\{3\}, I_2=I_3=\{2\}$, there are three choices for $(j_0,a_0)$ now and we get the extremal rays $\Bbb{Q}_{\geq 0}(\omega_2,\omega_2,\omega_2)$, and $\Bbb{Q}_{\geq 0}(\omega_1,\omega_2,0)$ and $\Bbb{Q}_{\geq 0}(\omega_1,0,\omega_2)$.

Altogether we have produced extremal rays $\Bbb{Q}_{\geq 0}(\omega_2,\omega_2,\omega_2)$, and six permutations of
$\Bbb{Q}_{\geq 0}(\omega_1,0,\omega_2)$.

To get the basic extremal rays from $r=2$, we just dualize the above weights (using Grassmann duality), and just get one new extremal ray  $\Bbb{Q}_{\geq 0}(\omega_2,\omega_2,\omega_2)$.

Doing the induction operation on $r=2$, we have two choices of faces (up to permutations):

\begin{enumerate}
\item $I_1=\{1,2\}$ and $I_2=I_3=\{2,3\}$. Inducing $(0,\omega_1,\omega_1)$ we get the ray $\Bbb{Q}_{\geq 0}(\omega_2,\omega_2,\omega_2)$. Therefore we have an example of an extremal ray which is type I on one face and type II on another. Inducing $(\omega_1,\omega_1,0)$, we get the ray $\Bbb{Q}_{\geq 0}(\omega_1,\omega_2,0)$.
\item $I_1=I_2=\{1,3\}$,$I_3=\{2,3\}$. Again, induction does not produce any new extremal rays.
\end{enumerate}

\subsection{For $n=4$}
The calculation work out the same way, with one surprise: We get one extremal ray which is not a triple of dominant fundamental weights, the triple $\Bbb{Q}_{\geq 0}(\omega_1+\omega_2,\omega_2,\omega_2)$ as in Example \ref{thesis}.

\subsection{Higher $n$}
Example 7.13 in \cite[Section 7]{DW}, gives an example of an extremal ray for $n=8$ which needs to be induced, since it does not have the Fulton scaling property. By \cite[Section 7]{DW}, one knows  that all extremal rays for $n\leq 7$ have the Fulton type scaling property. It is not clear if this is the first example of a extremal ray that is produced only by induction.

\subsection{An example in $\SL(9)$}\label{Reg}
The following example for $s=3$ was communicated to the author by Ressayre.  Let  (the data below is in $\Pic_{\Bbb{Q}}(\Fl(9)^3)$, see Remark \ref{identify},
$$\lambda_1=(3,3,3,2,2,2,2,1,0), \lambda_2=\lambda_3=(2,2,2,1,1,1,0,0,0).$$

Then $\Bbb{Q}_{\geq 0}(\killing(\lambda_1),\killing(\lambda_2),\killing(\lambda_3))$ is an extremal ray of $\Gamma_{9,\Bbb{Q}}(3)$, the actual example communicated to the author was the  triple of  duals of these representations, but duals of extremal rays are extremal. It was also shown by Ressayre that this was not a F-ray (i.e., the rank of the corresponding space of invariants is one, see Remark \ref{stingray}).

This ray can be shown to be on the facet $\mfq$  (the search for $I_1$, $I_2$, $I_3$ was made using some ideas from \cite{BTIFR}) given by the intersection number one situation provided by  $I_1=\{3,7,8\}$, $I_2=I_3=\{3,6,9\}$  in $\Gr(3,9)$. The following are easy to check by formulas provided in previous sections:
\begin{enumerate}
\item The representations $\lambda'_1=\lambda'_2=\lambda'_3=(1,1,0)$ give an extremal ray of $\Gamma_{3,\Bbb{Q}}(3)$. This is a type I extremal ray of the facet corresponding to $I_1=\{3\}$, $I_2=I_3=\{2\}$ in $\Gr(1,3)$ with $j_0=1$ and $T=\{2\}$.
In particular this is an F-extremal ray (see Remark \ref{stingray}) of $\Gamma_{3,\Bbb{Q}}(3)$.
\item The induction of this triple (with trivial representations on $\Gamma_{6,\Bbb{Q}}(3)$) gives $(\lambda_1,\lambda_2,\lambda_3)$ by a long but straightforward calculation by hand using the formulas for induction and the Littlewood-Richardson rule. The corresponding line bundle on $\Fl(9)^3$ is of the form $\mathcal{O}(\widetilde{E})$ for a divisor $\widetilde{E}$ which has a modular interpretation generically, see Theorem \ref{shosty}, and has hence a canonical invariant section (the space of invariants is two dimensional).
\end{enumerate}

\begin{bibdiv}

\begin{biblist}

\bib{BLocal}{article} {
    AUTHOR = {Belkale, P.},
     TITLE = {Local systems on {$\Bbb P^1-S$} for {$S$} a finite set},
   JOURNAL = {Compositio Math.},
  FJOURNAL = {Compositio Mathematica},
    VOLUME = {129},
      YEAR = {2001},
    NUMBER = {1},
     PAGES = {67--86},
  %    ISSN = {0010-437X},
   %MRCLASS = {14N35 (14H60 14N15 15A42 22E10)},
  %MRNUMBER = {1856023},
 %MRREVIEWER = {Hans U. Boden},
    %   DOI = {10.1023/A:1013195625868},
      % URL = {http://dx.doi.org.libproxy.lib.unc.edu/10.1023/A:1013195625868},
}
			
\bib{belkaleIMRN}{article}{
    AUTHOR = {Belkale, P.},
     TITLE = {Invariant theory of {${\rm GL}(n)$} and intersection theory of
              {G}rassmannians},
   JOURNAL = {Int. Math. Res. Not.},
  FJOURNAL = {International Mathematics Research Notices},
      YEAR = {2004},
    NUMBER = {69},
     PAGES = {3709--3721},
%      ISSN = {1073-7928},
 %  MRCLASS = {14M15 (14M17 20G05)},
 % MRNUMBER = {2099498},
 %MRREVIEWER = {H. H. Andersen},
   %    DOI = {10.1155/S107379280414155X},
     %  URL = {http://dx.doi.org.libproxy.lib.unc.edu/10.1155/S107379280414155X},
}

\bib{BGHorn}{article} {
    AUTHOR = {Belkale, P.},
     TITLE = {Geometric proofs of {H}orn and saturation conjectures},
   JOURNAL = {J. Algebraic Geom.},
  FJOURNAL = {Journal of Algebraic Geometry},
    VOLUME = {15},
      YEAR = {2006},
    NUMBER = {1},
     PAGES = {133--173},
 %     ISSN = {1056-3911},
  % MRCLASS = {14M15 (14M12 15A42 20G05)},
  %MRNUMBER = {2177198},
 %MRREVIEWER = {Harry Tamvakis},
    %   DOI = {10.1090/S1056-3911-05-00420-0},
      % URL = {http://dx.doi.org.libproxy.lib.unc.edu/10.1090/S1056-3911-05-00420-0},
}

\bib{BTIFR}{incollection} {
    AUTHOR = {Belkale, P.},
     TITLE = {Extremal unitary local systems on {$\Bbb
              P^1-\{p_1,\dots,p_s\}$}},
 BOOKTITLE = {Algebraic groups and homogeneous spaces},
    SERIES = {Tata Inst. Fund. Res. Stud. Math.},
    VOLUME = {19},
     PAGES = {37--64},
 PUBLISHER = {Tata Inst. Fund. Res., Mumbai},
      YEAR = {2007},
 %  MRCLASS = {14M15 (14N35)},
%  MRNUMBER = {2348901},
%MRREVIEWER = {Harry Tamvakis},
}

\bib{BICM}{inproceedings} {
    AUTHOR = {Belkale, P.},
     TITLE = {The tangent space to an enumerative problem},
 BOOKTITLE = {Proceedings of the {I}nternational {C}ongress of
              {M}athematicians. {V}olume {II}},
     PAGES = {405--426},
 PUBLISHER = {Hindustan Book Agency, New Delhi},
      YEAR = {2010},
   %MRCLASS = {14M17 (14D20 14M15 14N10 14N15)},
  %MRNUMBER = {2827802},
%MRREVIEWER = {Harry Tamvakis},
}
\bib{BK}{article} {
   AUTHOR = {Belkale, P.}
    AUTHOR =  {Kumar, S.},
     TITLE = {Eigenvalue problem and a new product in cohomology of flag
              varieties},
   JOURNAL = {Invent. Math.},
  FJOURNAL = {Inventiones Mathematicae},
    VOLUME = {166},
      YEAR = {2006},
    NUMBER = {1},
     PAGES = {185--228},
  %    ISSN = {0020-9910},
   %MRCLASS = {14M15 (20G05)},
 % MRNUMBER = {2242637},
 %MRREVIEWER = {Harry Tamvakis},
    %   DOI = {10.1007/s00222-006-0516-x},
      % URL = {http://dx.doi.org.libproxy.lib.unc.edu/10.1007/s00222-006-0516-x},
}
\bib{BK2}{article} {
   AUTHOR = {Belkale, P.}
    AUTHOR =  {Kumar, S.},
     TITLE = {Eigencone, saturation and {H}orn problems for symplectic and
              odd orthogonal groups},
   JOURNAL = {J. Algebraic Geom.},
  FJOURNAL = {Journal of Algebraic Geometry},
    VOLUME = {19},
      YEAR = {2010},
    NUMBER = {2},
     PAGES = {199--242},
%      ISSN = {1056-3911},
%   MRCLASS = {20G15 (05E15 14M15)},
 % MRNUMBER = {2580675},
%MRREVIEWER = {Ralf Koehl},
 %      DOI = {10.1090/S1056-3911-09-00517-7},
  %     URL = {http://dx.doi.org/10.1090/S1056-3911-09-00517-7},
}
\bib{BKR}{article} {
    AUTHOR = {Belkale, P.}
    AUTHOR = {Kumar, S.}
    AUTHOR =  {Ressayre, N.},
     TITLE = {A generalization of {F}ulton's conjecture for arbitrary
              groups},
   JOURNAL = {Math. Ann.},
  FJOURNAL = {Mathematische Annalen},
    VOLUME = {354},
      YEAR = {2012},
    NUMBER = {2},
     PAGES = {401--425},
%      ISSN = {0025-5831},
  % MRCLASS = {14C17 (14L30 14M15 20G05)},
  %MRNUMBER = {2965248},
%MRREVIEWER = {Dmitry A. Timash\~A$\ll$v},
   %    DOI = {10.1007/s00208-011-0728-2},
      % URL = {http://dx.doi.org.libproxy.lib.unc.edu/10.1007/s00208-011-0728-2},
}
\bib{BVW}{article} {
     AUTHOR = {Berline, N.}
     AUTHOR = {Vergne, M.}
    AUTHOR = { Walter, M.}
     TITLE = {The Horn inequalities from a geometric point of view},

		note={arXiv:1611.06917},
year={2016}

%      ISSN = {0001-8708},
%   MRCLASS = {14E30 (14H10 14L24)},
 % MRNUMBER = {2955192},
 %MRREVIEWER = {Scott R. Nollet},
  %     DOI = {10.1016/j.aim.2012.05.017},
   %    URL = {http://dx.doi.org/10.1016/j.aim.2012.05.017},
}
\bib{BZ}{article} {
    AUTHOR = {Bernstein, I. N.}
    AUTHOR = {Zelevinsky, A. V.},
     TITLE = {Induced representations of reductive {${\germ p}$}-adic
              groups. {I}},
   JOURNAL = {Ann. Sci. \'Ecole Norm. Sup. (4)},
  FJOURNAL = {Annales Scientifiques de l'\'Ecole Normale Sup\'erieure. Quatri\`eme
              S\'erie},
    VOLUME = {10},
      YEAR = {1977},
    NUMBER = {4},
     PAGES = {441--472},
%      ISSN = {0012-9593},
 %  MRCLASS = {22E50},
 % MRNUMBER = {0579172},
  %     URL = {http://www.numdam.org/item?id=ASENS_1977_4_10_4_441_0},
}

\bib{bumpy}{book} {
    AUTHOR = {Bump, D.},
     TITLE = {Lie groups},
    SERIES = {Graduate Texts in Mathematics},
    VOLUME = {225},
  % EDITION = {Second},
 PUBLISHER = {Springer, New York},
      YEAR = {2013},
  %   PAGES = {xiv+551},
    %  ISBN = {978-1-4614-8023-5; 978-1-4614-8024-2},
  % MRCLASS = {22-01 (22C05 22E46 22E60)},
  %MRNUMBER = {3136522},
 %MRREVIEWER = {Ben Lewis Cox},
    %   DOI = {10.1007/978-1-4614-8024-2},
      % URL = {http://dx.doi.org.libproxy.lib.unc.edu/ 10.1007/978-1-4614-8024-2},
}
\bib{DW}{article}{
    AUTHOR = {Derksen, H.}
    AUTHOR =  {Weyman, J.},
     TITLE = {The combinatorics of quiver representations},
   JOURNAL = {Ann. Inst. Fourier (Grenoble)},
  FJOURNAL = {Universit\'e de Grenoble. Annales de l'Institut Fourier},
    VOLUME = {61},
      YEAR = {2011},
    NUMBER = {3},
     PAGES = {1061--1131},
%      ISSN = {0373-0956},
 %  MRCLASS = {16G50 (05E10 13A50 20G05)},
 % MRNUMBER = {2918725},
 %MRREVIEWER = {Artem A. Lopatin},
  %     DOI = {10.5802/aif.2636},
   %    URL = {http://dx.doi.org/10.5802/aif.2636},
}

\bib{FInt}{book} {
    AUTHOR = {Fulton, W.},
     TITLE = {Intersection theory},
    SERIES = {Ergebnisse der Mathematik und ihrer Grenzgebiete. 3. Folge. A
              Series of Modern Surveys in Mathematics [Results in
              Mathematics and Related Areas. 3rd Series. A Series of Modern
              Surveys in Mathematics]},
    VOLUME = {2},
   EDITION = {Second},
 PUBLISHER = {Springer-Verlag, Berlin},
      YEAR = {1998},
     PAGES = {xiv+470},
  %    ISBN = {3-540-62046-X; 0-387-98549-2},
   %MRCLASS = {14C17 (14-02)},
  %MRNUMBER = {1644323},
     %  DOI = {10.1007/978-1-4612-1700-8},
      % URL = {http://dx.doi.org.libproxy.lib.unc.edu/10.1007/978-1-4612-1700-8},
}

\bib{FBulletin}{article} {
    AUTHOR = {Fulton, W.},
     TITLE = {Eigenvalues, invariant factors, highest weights, and
              {S}chubert calculus},
   JOURNAL = {Bull. Amer. Math. Soc. (N.S.)},
  FJOURNAL = {American Mathematical Society. Bulletin. New Series},
    VOLUME = {37},
      YEAR = {2000},
    NUMBER = {3},
     PAGES = {209--249},
%      ISSN = {0273-0979},
  % MRCLASS = {15A42 (14L24 14M15 20G05)},
 % MRNUMBER = {1754641},
%MRREVIEWER = {Frank Sottile},
   %    DOI = {10.1090/S0273-0979-00-00865-X},
      % URL = {http://dx.doi.org.libproxy.lib.unc.edu/10.1090/%S0273-0979-00-00865-X},
}

\bib{Kly}{article} {
    AUTHOR = {Klyachko, A. A.},
     TITLE = {Stable bundles, representation theory and {H}ermitian
              operators},
   JOURNAL = {Selecta Math. (N.S.)},
  FJOURNAL = {Selecta Mathematica. New Series},
    VOLUME = {4},
      YEAR = {1998},
    NUMBER = {3},
     PAGES = {419--445},
%      ISSN = {1022-1824},
  % MRCLASS = {14J60 (14D20 14M25 14N15 15A42 20G05 47B15)},
  %MRNUMBER = {1654578},
%MRREVIEWER = {Alexey N. Rudakov},
   %    DOI = {10.1007/s000290050037},
      % URL = {http://dx.doi.org.libproxy.lib.unc.edu/10.1007/s000290050037},
}
\bib{KT}{article} {
    AUTHOR = {Knutson, A.}
    AUTHOR =  {Tao, T.},
     TITLE = {The honeycomb model of {${\rm GL}_n({\bf C})$} tensor
              products. {I}. {P}roof of the saturation conjecture},
   JOURNAL = {J. Amer. Math. Soc.},
  FJOURNAL = {Journal of the American Mathematical Society},
    VOLUME = {12},
      YEAR = {1999},
    NUMBER = {4},
     PAGES = {1055--1090},
 %     ISSN = {0894-0347},
  % MRCLASS = {20G05 (05E05 14M15 52B20)},
 % MRNUMBER = {1671451},
 %MRREVIEWER = {Anders Skovsted Buch},
    %   DOI = {10.1090/S0894-0347-99-00299-4},
      % URL = {http://dx.doi.org.libproxy.lib.unc.edu/10.1090/S0894-0347-99-00299-4},
}
\bib{KTW}{article} {
    AUTHOR = {Knutson, A.}
    AUTHOR =  {Tao, T.}
    AUTHOR=  {Woodward, C.},
     TITLE = {The honeycomb model of {${\rm GL}_n(\Bbb C)$} tensor products.
              {II}. {P}uzzles determine facets of the
              {L}ittlewood-{R}ichardson cone},
   JOURNAL = {J. Amer. Math. Soc.},
  FJOURNAL = {Journal of the American Mathematical Society},
    VOLUME = {17},
      YEAR = {2004},
    NUMBER = {1},
     PAGES = {19--48},
   %   ISSN = {0894-0347},
   %MRCLASS = {14N15 (05E10 15A69 52B12)},
  %MRNUMBER = {2015329},
%MRREVIEWER = {Anders Skovsted Buch},
   %    DOI = {10.1090/S0894-0347-03-00441-7},
      % URL = {http://dx.doi.org.libproxy.lib.unc.edu/10.1090/S0894-0347-03-00441-7},
}
\bib{kumar}{article} {
    AUTHOR = {Kumar, S.},
     TITLE = {A survey of the additive eigenvalue problem},
      NOTE = {With an appendix by M. Kapovich},
   JOURNAL = {Transform. Groups},
  FJOURNAL = {Transformation Groups},
    VOLUME = {19},
      YEAR = {2014},
    NUMBER = {4},
     PAGES = {1051--1148},
 %     ISSN = {1083-4362},
 %  MRCLASS = {14M15 (15A42 17B10 17B56 20G20 22E46 51E24)},
 % MRNUMBER = {3278861},
 %MRREVIEWER = {Christian Ohn},
 %      DOI = {10.1007/s00031-014-9287-4},
 %      URL = {http://dx.doi.org.libproxy.lib.unc.edu/10.1007/s00031-014-9287-4},
}
\bib{R1}{article} {
    AUTHOR = {Ressayre, N.},
     TITLE = {Geometric invariant theory and the generalized eigenvalue
              problem},
   JOURNAL = {Invent. Math.},
  FJOURNAL = {Inventiones Mathematicae},
    VOLUME = {180},
      YEAR = {2010},
    NUMBER = {2},
     PAGES = {389--441},
  %    ISSN = {0020-9910},
   %MRCLASS = {14L24 (17B99)},
  %MRNUMBER = {2609246},
%MRREVIEWER = {Amnon Neeman},
   %    DOI = {10.1007/s00222-010-0233-3},
     %  URL = {http://dx.doi.org.libproxy.lib.unc.edu/10.1007/s00222-010-0233-3},
}

\bib{R2}{article} {
    AUTHOR = {Ressayre, N.},
     TITLE = {Geometric invariant theory and generalized eigenvalue problem
              {II}},
   JOURNAL = {Ann. Inst. Fourier (Grenoble)},
  FJOURNAL = {Universit\'e de Grenoble. Annales de l'Institut Fourier},
    VOLUME = {61},
      YEAR = {2011},
    NUMBER = {4},
     PAGES = {1467--1491 (2012)},
 %     ISSN = {0373-0956},
  % MRCLASS = {14L30 (14L24 20G05)},
  %MRNUMBER = {2951500},
 %MRREVIEWER = {Andy R. Magid},
    %   DOI = {10.5802/aif.2647},
       %URL = {http://dx.doi.org.libproxy.lib.unc.edu/10.5802/aif.2647},
}

\bib{RICM}{article} {
    AUTHOR = {Ressayre, N.}

     TITLE = {Some qualitative properties of branching
              multiplicities},
              date={2014},
		note={Proceedings of the ICM 2014, vol III, pages 165--189},

%      ISSN = {0001-8708},
%   MRCLASS = {14E30 (14H10 14L24)},
 % MRNUMBER = {2955192},
 %MRREVIEWER = {Scott R. Nollet},
  %     DOI = {10.1016/j.aim.2012.05.017},
   %    URL = {http://dx.doi.org/10.1016/j.aim.2012.05.017},
}

\bib{thaddy}{article} {
     AUTHOR = {Thaddeus, M.}

     TITLE = {The universal implosion
and the multiplicative Horn problem},

		note={Lecture at AGNES, slides available at http://www.math.columbia.edu/~thaddeus/},
		year={2014}

%      ISSN = {0001-8708},
%   MRCLASS = {14E30 (14H10 14L24)},
 % MRNUMBER = {2955192},
 %MRREVIEWER = {Scott R. Nollet},
  %     DOI = {10.1016/j.aim.2012.05.017},
   %    URL = {http://dx.doi.org/10.1016/j.aim.2012.05.017},
}

\bib{Totaro}{incollection}{
    AUTHOR = {Totaro, B.},
     TITLE = {Tensor products of semistables are semistable},
 BOOKTITLE = {Geometry and analysis on complex manifolds},
     PAGES = {242--250},
 PUBLISHER = {World Sci. Publ., River Edge, NJ},
      YEAR = {1994},
%   MRCLASS = {14D25 (14L30)},
 % MRNUMBER = {1463972},
% MRREVIEWER = {Nyshadham Raghavendra},
}

\end{biblist}
\end{bibdiv}
\vspace{0.05 in}

\noindent
Department of Mathematics, University of North Carolina, Chapel Hill, NC 27599\\
{{email: belkale@email.unc.edu}}
\vspace{0.05 in}

\end{document}